\documentclass[a4paper,11pt,parskip=half]{scrartcl}

\usepackage[utf8]{inputenc}
\usepackage{amsmath,amssymb,amsthm,mathtools,bm}
\usepackage{graphicx}
\usepackage{subcaption}

\usepackage{hyperref}
\usepackage{comment}

\usepackage{rotating}
\usepackage{multirow}

\usepackage[vlined,ruled]{algorithm2e}

\usepackage{pgfplots}

\newcommand{\mario}[1]{{\color{blue}{#1}}}
\newcommand{\chris}[1]{{\color{purple}{#1}}}

\providecommand{\mix}{\mathrm{mix}}
\newcommand{\rd}{\,\mathrm{d}}

\providecommand{\N}{\ensuremath{\mathbb{N}}}
\providecommand{\Z}{\ensuremath{\mathbb{Z}}}
\providecommand{\Q}{\ensuremath{\mathbb{Q}}}
\providecommand{\R}{\ensuremath{\mathbb{R}}}
\providecommand{\C}{\ensuremath{\mathbb{C}}}

\providecommand{\cH}{\ensuremath{\mathcal{H}}}

\newcommand{\bsx}{{\boldsymbol{x}}}	
\newcommand{\bsy}{{\boldsymbol{y}}}	

\newcommand{\bsm}{{\boldsymbol{m}}}
\newcommand{\bsk}{{\boldsymbol{k}}}


\newcommand{\supp}{\operatorname{supp}}
\newcommand{\spanZ}{\operatorname{span}_{\Z}}

\newcommand{\imag}{\mathrm{i}}
\newcommand{\euler}{\mathrm{e}}

\newcommand{\SLdZ}{\mbox{SL}_d(\Z)}

\newcommand{\GLdR}{\mbox{GL}_d(\R)}

\newcommand{\Nm}{\mathrm{Nm}}

\newcommand{\vol}{\mathrm{vol}}


\newcommand{\mr}[1]{\mathring{#1}}
\newcommand{\abs}[1]{\left|{#1}\right|}
\newcommand{\norm}[1]{\left\|{#1}\right\|}

\newcommand{\bsr}{{\boldsymbol{r}}}

\newcommand{\scalingFactor}{n}

\author{Christopher Kacwin \and Jens Oettershagen \and Mario Ullrich \and Tino Ullrich}
\title{Numerical performance of optimized Frolov lattices in tensor product reproducing kernel Sobolev spaces}

\newtheorem{theorem}{Theorem}[section]
\newtheorem{proposition}[theorem]{Proposition}
\newtheorem{lemma}[theorem]{Lemma}

\theoremstyle{definition}
\newtheorem{definition}[theorem]{Definition}

\theoremstyle{remark}

\newtheorem{remark}[theorem]{Remark}

\numberwithin{equation}{section}

\bibliographystyle{plain}


\begin{document}

\maketitle

\begin{abstract}
	In this paper, we deal with several aspects of the universal Frolov cubature method, that is known to achieve optimal asymptotic convergence rates 
	in a broad range of function spaces. 
	Even though every admissible
	lattice has this 
	favorable asymptotic behavior, there are significant differences concerning the precise numerical behavior of the 
	worst-case error. To this end, we propose new generating polynomials that promise a significant reduction of the integration error 
	compared to the classical polynomials. Moreover, we develop a new algorithm to enumerate the Frolov points from non-orthogonal 
	lattices for numerical cubature in the $d$-dimensional unit cube $[0,1]^d$. Finally,
	we study Sobolev spaces with anisotropic mixed smoothness and compact support in $[0,1]^d$ and derive 
 	explicit formulas for their reproducing kernels. This allows for the simulation of exact worst-case errors 
	which numerically validate our theoretical results.
	
\end{abstract}

\section{Introduction}
Many scientific approaches that are related to the treatment of real world phenomena rely on the computation of integrals on high-dimensional domains which often cannot be treated analytically. Examples include physics \cite{binderbook}, computational finance \cite{Glasserman:2003}, econometrics \cite{monfortbook} and machine learning \cite{avron2016quasi,ijcai2017-207,rahimi}. In this paper, we aim for efficient and stable numerical methods to approximately 
compute the integral
\[
I_d(f) \,:=\, \int_{[0,1]^d} f(\bsx) \rd\bsx
\]
and give reliable error guarantees for a class $F_d$ of $d$-variate functions. In fact, we are particularly interested
in the worst-case error
\begin{equation}\label{eNFd}
  e(\scalingFactor,F_d) := \sup\limits_{\|f\|_{F_d} \leq 1} |I_d(f) - Q_\scalingFactor^d(f)|, 
\end{equation}
for special cubature formulas of type
\begin{equation}\label{eq:Q}
Q^d_\scalingFactor(f) \,=\, \frac{1}{\scalingFactor}\sum_{\bsk \in\Z^d}\, f(\bm{A}_\scalingFactor \bsk),
\end{equation}
where $\bm{A}_\scalingFactor:=\scalingFactor^{-1/d} \bm{A}$ is a suitable $d\times d$-matrix with $\det(\bm{A})=1$. This type of
cubature rule has a long history going back to the 1970s, see Frolov \cite{Fr76}. In \eqref{eq:Q} the function $f$ 
is assumed to be supported on a bounded domain $\Omega$ such that only finitely many summands contribute to the sum. Frolov 
noticed that the property
\begin{equation}\label{Nm}
\Nm (\bm{A}) \coloneqq
 \inf_{\bm{k}\in\Z^d\setminus\{0\}} \Big|\prod_{i=1}^d (\bm{Ak})_i\Big| > 0
\end{equation}
guarantees an optimal asymptotic worst-case behavior of \eqref{eNFd} with respect to functions 
with $L_p$-bounded mixed derivative of order $r \in \N$ supported in $[0,1]^d$. In this context, optimality 
means that the worst-case error \eqref{eNFd} can not be improved in the order sense by any other cubature 
formula using the same number of points. Note, that in case $|\det \bm{A}| = 1$ it can be shown that 
\begin{equation} \label{eqn_scalingVolume}
 \scalingFactor^{-1}\abs{\{\bsk\colon \bm{A}_\scalingFactor \bsk\in\Omega\}}\to 1
\end{equation}
for every set $\Omega$ with (Lebesgue) volume $1$ \cite{Sk94}.

Frolov showed that the set of matrices satisfying \eqref{Nm} is not empty. Moreover, he gave a 
rather sophisticated number theoretic construction with a lot of potential for numerical analysis, as we will see in this
paper. Starting with the irreducible (over $\mathbb{Q}$) 
polynomial 
\begin{equation} \label{eqn_classical_polynomial}
	P_d(x) = \prod_{j=1}^d (x-2j+1)-1 = \prod_{i=1}^d (x-\xi_i)
\end{equation}
he defined the Vandermonde matrix 
\begin{equation}\label{f21}
    \bm{V} = \left(\begin{array}{cccc}
                1 & \xi_1 & \cdots & \xi_1^{d-1}\\
                1 & \xi_2 & \cdots & \xi_2^{d-1}\\
                \vdots & \vdots & \ddots & \vdots\\
                1 & \xi_{d} & \cdots & \xi_d^{d-1}
              \end{array}\right)\,\in\GLdR\,.
\end{equation}
One reason for the increasing interest in Frolov's cubature rule is certainly the fact that once a good matrix
\eqref{Nm} is fixed the integration nodes are simply given as the rescaled image of the integer lattice points $\Z^d$
under the matrix $\bm{V}$. The method is therefore comparably \emph{simple}. Another striking aspect is a property which is 
sometimes called \emph{universality}. The method \eqref{eq:Q} is not designed for a specific class of functions 
$F_d$ as it is often the case for the commonly used quasi-Monte Carlo methods based on digital nets. In other words, we do not need 
to incorporate any a priori knowledge about the integrand (e.g. mixed or isotropic regularity etc.).

In this paper we are interested in an efficient implementation and the numerical performance of different 
Frolov type cubature methods for functions on $[0,1]^d$. First of all, this requires the efficient 
enumeration of Frolov lattice nodes in axis parallel boxes. It turned out that this is a
highly non-trivial task which has been already considered by several authors 
\cite{Kac16}, \cite{KaOeUl17}, \cite{SuYo16} including three of the 
present ones.
With a naive approach one may need to touch much more integer lattice points $\bsk\in \Z^d$ (overhead)
to check whether $\bm{A}\bsk \in [0,1]^d$. This increases the runtime of an enumeration algorithm drastically in high 
dimensions. Here, the chosen irreducible polynomial for \eqref{f21} has a significant effect. In \cite{KaOeUl17} the 
authors observed that for $d=2^m$ Chebyshev polynomials lead to an orthogonal lattice and an equivalent (orthogonal) 
lattice representation matrix with entries smaller than two in modulus. By exploiting rotational 
symmetry properties the mentioned overhead can be reduced and the enumeration procedure is less costly. 

This observation already indicated that the choice of the polynomials in \eqref{f21} is crucial. Unfortunately, 
Chebyshev polynomials and corresponding Vandermonde matrices \eqref{f21} only provide \eqref{Nm} if $d = 2^m$. This 
has been shown for instance in Temlyakov \cite{tem93}. The question remains how to fill the gaps. The classical Frolov 
polynomials are inappropriate in two respects. First, its roots spread in the range $[-d,d]$ such that \eqref{f21} gets 
highly ill-conditioned. And secondly, although the lattice satisfies \eqref{Nm}, the points are not really ``spaces filling'' 
meaning that the points accumulate around a lower dimensional manifold. This has a severe numerical impact for the 
worst-case error. In fact, the asymptotic rate of convergence is optimal but the preasymptotic behavior is useless for 
any practical issues. 

One of the main contributions of the paper is the list of new improved generating polynomials given in Section 3 below.
We give polynomials which are optimized according to the mentioned issues in dimensions $d=1,...,10$, especially with a 
narrow distribution of its roots. As already mentioned above Chebyshev polynomials itself are not irreducible if $d$ is not a power of 
two. However, they may provide admissible factors. This is the main idea of the construction and works if $d \in 
\{2,3,4,5,6,8,9,10\}$. As for the case $d=7$ a brute force search led to a polynomial with roots in $(-2.25,1.75)$.

Due to the mentioned \emph{universality} of Frolov's cubature rule, it is enough to fix the matrix and the corresponding 
lattice once and for all. In fact, the point construction does note depend on the respective framework. Therefore, it makes sense 
to generate the lattice points in a preprocessing step and make them available for practitioners. 
Our enumeration algorithm is similar to the one in \cite{KaOeUl17} and extends to non-orthogonal lattices 
by exploiting a $QR$-factorization, see Section 4. Based on the above list of 
polynomials we generated a database of Frolov lattice nodes for dimensions up to $d=10$ and $N \approx 10^6$ 
points.  The points 
are available for download and direct use on the website
\begin{center}
 \texttt{http://wissrech.ins.uni-bonn.de/research/software/frolov/}
\end{center}

Having generated the cubature points we are now able to test the 
performance of various Frolov methods for functions with bounded mixed (weak) derivative, i.e,
\begin{equation}\label{mixed}
  \langle f^{(\bsr)}, f^{(\bsr)}\rangle_{L_2} = \|f^{(\bsr)}\|_2^2 < \infty\,.
\end{equation}
where $\bsr = (r_1,...,r_d) \in \N^d$ is a smoothness vector with integer components satisfying
\begin{equation}\label{sm_vector}
    r=r_1 = ... =r_\nu < r_{\nu+1} \leq r_{\nu+2} \leq ... \leq r_d\,.
\end{equation}
A natural assumption, see 
\eqref{eq:Q}, is the restriction to functions $f$ supported inside the unit cube $\Omega = [0,1]^d$ satisfying 
\eqref{mixed}. In this case the semi-norm \eqref{mixed} becomes a norm and the corresponding space a Hilbert space
which will be denoted with $\mathring{H}^{\mathbf{r}}_{\text{mix}}$\,.

The nowadays 
well-known worst-case error 
\begin{equation}\label{f22}
  e(\scalingFactor,\mathring{H}^{\mathbf{r}}_{\text{mix}}) \asymp n^{-r}(\log n)^{(\nu-1)r},
\end{equation}
has been established in many classical papers \cite{Fr76}, \cite{Du92,Du97}, \cite{tem93}, see 
also the more recent papers \cite{MU16} and \cite{UlUl16}\,. Note, that we encounter 
another aspect of the \emph{universality} 
property for this particular framework of \emph{anisotropic mixed smoothness}. When using for instance a sparse
grid approach (see e.g. Appendix A) for the numerical integration one has to know which direction is ``rough'' in the above sense to 
adapt the sparse grid accordingly. 
In fact, one samples more points in rough directions and less points in smoother direction. Frolov's method does
not need this a priori information and behaves according to the optimal rate of convergence given in \eqref{f22}.

We will again provide a streamlined and self-contained proof in Section 6 pointing explicitly on the dependence of the constants on the dimension $d$, since the rate of convergence given by \eqref{f22} completely hides this 
dependence. In fact, in case of one minimal smoothness component in \eqref{sm_vector} even the logarithm disappears completely and we 
have a pure polynomial rate as in the univariate setting. In Theorem \ref{thm:theor-error} below 
we give a worst-case error bound which shows the influence of the dimension $d$. In addition, the result illustrates 
how the lattice invariants, like the polynomial discriminant $D_P$ and the $\ell_\infty$-diameter of the smallest  
fundamental cell enter the error estimates.

Since $\mathring{H}^{\mathbf{r}}_{\text{mix}}$ is embedded into the space of continuous functions a reproducing 
kernel exists \cite{aronszajn}. We use the approach of Wahba \cite{Wah90} as a starting point 
to derive its reproducing kernel. 
Together with a standard correction procedure, cf. \cite[Lem. 3, Thm.\ 11]{berlinet}, we derive an explicit 
formula given in Theorem \ref{repr_kernel} and \eqref{f7_2} below. The reproducing kernel is then being used 
to simulate the exact worst-case errors which represent the norm of the error functional, i.e. its Riesz representer, which 
can be computed exactly.
This approach allows to gain insights into the true behavior of the constants that are involved in the bounds for the integration error and usually only are estimated.
Let us emphasize once again that we simulate the worst-case error with respect to a whole function class rather than testing the algorithm on a single prototype test function.   

Finally, in Section 7 we show the results of several numerical experiments. In the first part of the experiment section 
we compare different well-known methods for numerical integration in the reproducing kernel Hilbert space framework 
which we established in Sections 5 and 6. In particular, we compare Frolov lattices based on different generating polynomials, 
the classical Frolov polynomials and the improved polynomials from Section 3. As one would expect, the numerical 
behavior of the respective worst-case errors differ significantly for small $\scalingFactor$. Where the improved polynomials lead to a 
rather satisfactory error decay, the classical method is numerically completely useless if the dimension increases. 
Interestingly, in case $d=2$ Frolov lattices according to the golden ratio polynomial compete with the Fibonacci lattice rule.
We also compare Frolov lattices and sparse grids with respect to the numerical performance. Note, that the 
sparse grid cubature method represents a further method which is able to benefit from higher (mixed) smoothness. 
However, it is well known \cite{DuUl15} that sparse grids show a worse behavior in the logarithm 
compared to Frolov lattices. Our experiments validate this theoretical fact. Among the considered methods (sparse grids, quasi-Monte Carlo) Frolov lattices
behave best in our setting. In addition, Frolov lattices do not have to be adapted to the present anisotropy when 
considering anisotropic mixed smoothness. When considering one minimal smoothness component \eqref{sm_vector} 
we observe the same pure polynomial rate in different dimensions, only the constant differs.
Note, that this effect would also be present for sparse grids 
adapted to the smoothness vector, which one has to know in advance. 

\textbf{Notation.} As usual $\N$ denotes the natural numbers, 
$\Z$ denotes the integers, 
and $\R$ the real numbers
. 
The letter $d$ is always reserved for the underlying dimension in $\R^d, \Z^d$ etc. We denote
with $\bsx \cdot \bsy$ 
the usual Euclidean inner product in $\R^d$. 
For $0<p\leq \infty$ we denote with $|\cdot |_p$ and $\|\cdot \|_p$ the ($d$-dimensional) 
discrete $\ell_p$-norm and the continuous $L_p$-norm on $\R$, respectively, 
where $B_p^d$ denotes the respective unit ball in $\R^d$.
The function $(\cdot)_+$ is given by $\max\{\cdot,0\}$.
With $\mathcal{F}$ we denote the Fourier transform given by $\mathcal{F} f(\boldsymbol{\xi}):=\int_{\R} f(x)\exp(-2\pi i \bsx\cdot \boldsymbol{\xi})\,\mathrm{d}\bsx$ for 
a function $f\in L_1(\R^d)$ and $\boldsymbol{\xi} \in \R^d$. For two sequences of real numbers $a_n$ and $b_n$ we will write 
$a_n \lesssim b_n$ if there exists a constant $c>0$ such that 
$a_n \leq c\,b_n$ for all $n$. We will write $a_n \asymp b_n$ if 
$a_n \lesssim b_n$ and $b_n \lesssim a_n$.  With $\mathrm{GL}_d:=\mathrm{GL}_d(\mathbb{R})$ we
denote the group of invertible matrices over $\R$, whereas $\mathrm{SO}_d:=\mathrm{SO}_d(\R)$ 
denotes the group of orthogonal matrices over $\R$ with unit determinant. With $\mathrm{SL}_d(\mathbb{Z})$ we 
denote the group of invertible matrices over $\Z$ with unit determinant.
The notation $D:=\mbox{diag}(x_1,...,x_d)$ with $\bsx = (x_1,...,x_d) \in \R^d$ 
refers to the diagonal matrix $D \in \R^{d\times d}$ with $\bsx$ at the diagonal.
With $\mathrm{gcd}(a,b)$ we denote the greatest common divisor of two positive integers $a,b$.
And finally, by $\mathbb{Z}[x]$ we denote the ring of polynomials with integer coefficients. 
Although we consider different generating matrices for admissible lattices
in the forthcoming, we do not specify the matrix in the denotation $Q^d_\scalingFactor$. 
This is, because we will fix, for every dimension $d$ under consideration, 
a matrix that is optimal in a sense that will be explained later.
To be precise, for a given dimension $d$, the matrix $A$ will be a multiple of 
the Vandermonde matrix as defined in Theorem~\ref{construction} with the 
specific polynomials (and roots) as given in Table~\ref{PolTable}.

\section{Admissible lattices and their representation} \label{sec:Admissible}

For a matrix $\bm{T}\in\GLdR$, we call $\{\bm{Tk}:\bm{k}\in \Z^d\} = \bm{T}(\Z^d)$
a (full-rank) lattice with lattice representation matrix $\bm{T}$.

For a matrix $\bm{U}\in\SLdZ$, the matrices $\bm{T}$ and $\bm{TU}$ generate the same lattice,
and it can easily be shown that all possible lattice representations of $\bm{T}(\Z^d)$ are given this way.
Therefore, it makes sense to define the determinant of a lattice $\bm{T}(\Z^d)$ as $|\det \bm{T}|$.
We want to mention that for a given lattice,
it is often preferred to have a lattice representation matrix $\bm{T} = (\bm{t}_1|\cdots|\bm{t}_d) \in \GLdR$
with column vectors $\bm{t}_1,\ldots,\bm{t}_d\in\R^d$ that are small with respect to some norm,
cf. Figure \ref{fig_equiv_lattice}.
\begin{figure}[t]
\centering
\includegraphics[width=0.48\linewidth]{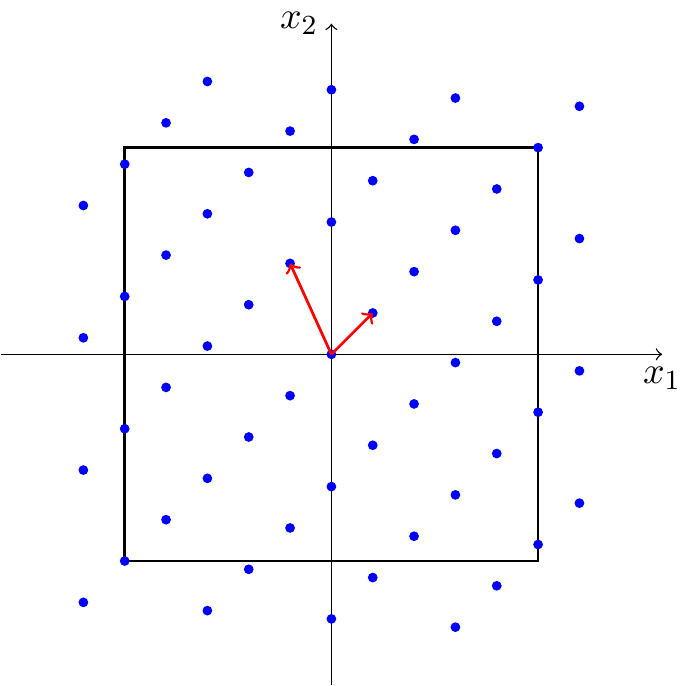}
\includegraphics[width=0.48\linewidth]{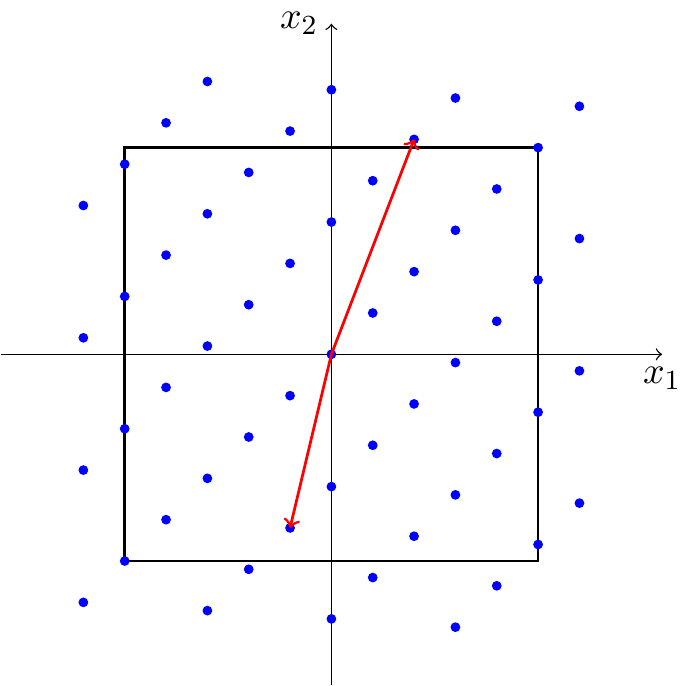}
\caption{Equivalent lattice representations within the unit cube $\Omega=\left[-1/2, 1/2\right]^2$. 
The highlighted lattice elements are the column vectors of the corresponding lattice representation matrix.}
\label{fig_equiv_lattice}
\end{figure}

Crucial for the performance of the Frolov cubature formula \eqref{eq:Q} will be the notion of \emph{admissibility}
which is settled in the following definition.
\begin{definition}[Admissible lattice]
A lattice $\bm{T}(\Z^d)$ is called admissible if
\begin{equation}\label{adm}
\Nm (\bm{T}) \coloneqq
 \inf_{\bm{k}\in\Z^d\setminus\{0\}} \Big|\prod_{i=1}^d (\bm{Tk})_i\Big| > 0
\end{equation}
holds true.
\end{definition}
\noindent Figure \ref{fig_lattice_hc} illustrates this property. 
In fact, lattice points different from $0$ lie outside of a hyperbolic
cross with 'radius' $\Nm(\bm{T})$.
\begin{figure}[t]
\centering
	\includegraphics[width=0.5\linewidth]{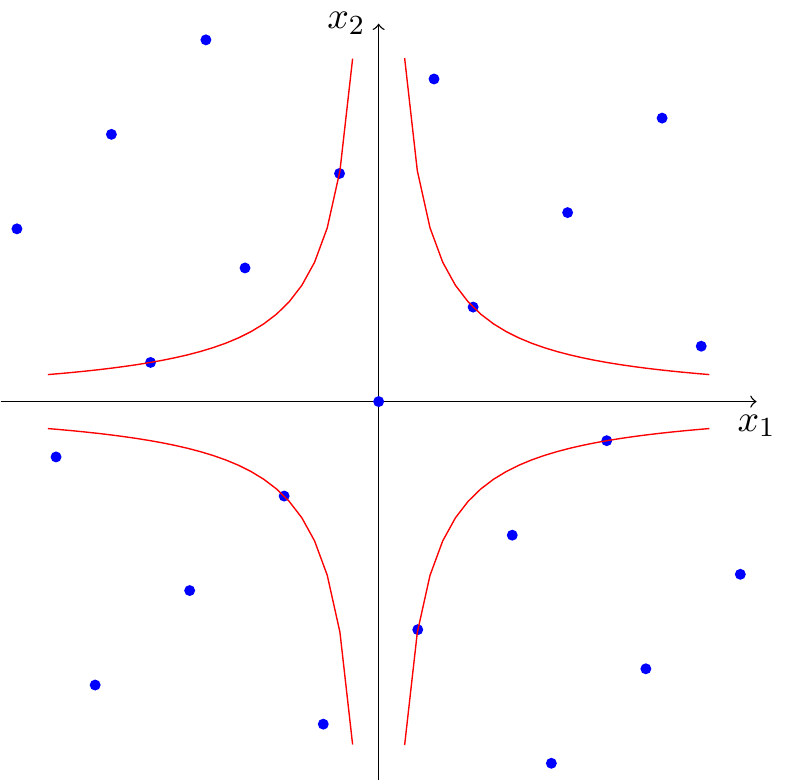}
	\caption{Admissible lattice and hyperbolic cross.} \label{fig_lattice_hc}
\end{figure}
Our construction of choice for admissible lattices is given by the following procedure.
\begin{proposition}\label{construction}
Let $P(x)$ be a polynomial of degree $d$ satisfying
\begin{itemize}
\item $P$ has integer coefficients,
\item $P$ has leading coefficient $1$,
\item $P$ is irreducible over $\Q$,
\item $P$ has $d$ different real roots $\xi_1, \ldots , \xi_d$\,.
\end{itemize}
The Vandermonde matrix 
\begin{equation}\label{Vanderm}
    \bm{V} = \left(\begin{array}{cccc}
                1 & \xi_1 & \cdots & \xi_1^{d-1}\\
                1 & \xi_2 & \cdots & \xi_2^{d-1}\\
                \vdots & \vdots & \ddots & \vdots\\
                1 & \xi_{d} & \cdots & \xi_d^{d-1}
              \end{array}\right)\,\in\GLdR
\end{equation}
generates an admissible lattice $\bm{V}(\Z^d)$ with $\Nm(\bm{V}) = 1$.
Its determinant equals the polynomial discriminant $D_P$ of $P$:
\begin{equation}\label{det_Vandermonde}
|\det \bm{V}| = \prod_{k<l}|\xi_k-\xi_l| = D_P\,.
\end{equation}
Moreover, it holds
\begin{equation}\label{DualityOfNorm}
\Nm(\bm{V}^{-\top}) = |\det \bm{V}|^{-2} = D_P^{-2}\,.
\end{equation}
\end{proposition}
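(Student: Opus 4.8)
The plan is to read off the coordinates of an arbitrary lattice point $\bm{V}\bm{k}$ as evaluations of an integer polynomial, and then to reduce every assertion to the single arithmetic fact that the product of all conjugate values of a nonzero algebraic integer is a nonzero rational integer. For $\bm{k}=(k_0,\dots,k_{d-1})^\top\in\Z^d$ I associate the polynomial $g(x)=\sum_{m=0}^{d-1}k_m x^m\in\Z[x]$ of degree $<d$. Since the $(i,m)$-entry of $\bm{V}$ is $\xi_i^{m}$, one has $(\bm{V}\bm{k})_i=g(\xi_i)$ and hence $\prod_{i=1}^d(\bm{V}\bm{k})_i=\prod_{i=1}^d g(\xi_i)$. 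Because $P$ is monic with integer coefficients, each $\xi_i$ is an algebraic integer, so $\prod_i g(\xi_i)=N_{\Q(\xi)/\Q}(g(\xi))$ (equivalently the resultant $\mathrm{Res}(P,g)$, a determinant of an integer Sylvester matrix) is a rational integer. For $\bm{k}\neq 0$ we have $g\neq 0$ with $\deg g<d=\deg P$; as $P$ is irreducible it is the minimal polynomial of every root, so no $\xi_i$ is a root of $g$, whence each $g(\xi_i)\neq0$ and the integer $\prod_i g(\xi_i)$ is nonzero, i.e.\ of modulus $\geq 1$. Taking $g\equiv 1$ (that is $\bm{k}=(1,0,\dots,0)^\top$) attains the value $1$, so the infimum is attained and $\Nm(\bm{V})=1>0$; in particular $\bm{V}(\Z^d)$ is admissible.

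The determinant claim is the classical Vandermonde evaluation $\det\bm{V}=\prod_{k<l}(\xi_l-\xi_k)$, so that $|\det\bm{V}|=\prod_{k<l}|\xi_k-\xi_l|=D_P$ by the definition of $D_P$; no work beyond the standard cofactor computation is needed here.

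For the dual statement I would first make $\bm{V}^{-\top}$ explicit. The relation $L_i(\xi_j)=\delta_{ij}$ for the Lagrange polynomials $L_i(x)=\prod_{j\neq i}\frac{x-\xi_j}{\xi_i-\xi_j}=\frac{1}{P'(\xi_i)}\prod_{j\neq i}(x-\xi_j)$ reads, in matrix form, as $C\bm{V}^\top=I$ for the matrix $C$ whose $i$-th row is the coefficient vector of $L_i$; hence $C=\bm{V}^{-\top}$. Now put $D=\mathrm{diag}(P'(\xi_1),\dots,P'(\xi_d))$. The $i$-th row of $D\bm{V}^{-\top}$ is the coefficient vector of $P'(\xi_i)L_i(x)=\prod_{j\neq i}(x-\xi_j)=\frac{P(x)-P(\xi_i)}{x-\xi_i}$, using $P(\xi_i)=0$. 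The bivariate identity $\frac{P(x)-P(t)}{x-t}=\sum_{m=0}^{d-1}b_m(t)\,x^m$ with $b_m(t)=\sum_{n=m+1}^{d}a_n t^{\,n-1-m}\in\Z[t]$ (where $P(x)=\sum_n a_n x^n$, $a_d=1$) then gives $D\bm{V}^{-\top}=\bm{V}B$, with $B\in\GLdZ$ the integer matrix collecting the coefficients of the $b_m$. Crucially $B$ is anti-triangular with the monic coefficient $a_d=1$ along its whole anti-diagonal, so $|\det B|=1$. Consequently $\bm{V}^{-\top}(\Z^d)=D^{-1}\bm{V}B(\Z^d)=D^{-1}\bm{V}(\Z^d)$.

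To conclude, note that right multiplication by an element of $\GLdZ$ only relabels the index set $\Z^d$ over which the infimum runs, so $\Nm$ is a lattice invariant and $\Nm(\bm{V}^{-\top})=\Nm(D^{-1}\bm{V})$. Since $(D^{-1}\bm{V}\bm{k})_i=(\bm{V}\bm{k})_i/P'(\xi_i)$, factoring out the constant denominator yields $\Nm(D^{-1}\bm{V})=\Nm(\bm{V})\big/\big|\prod_i P'(\xi_i)\big|$. Finally $\prod_i P'(\xi_i)=\prod_{i\neq j}(\xi_i-\xi_j)=\pm\prod_{k<l}(\xi_k-\xi_l)^2$, so $\big|\prod_i P'(\xi_i)\big|=D_P^2=|\det\bm{V}|^2$, and with $\Nm(\bm{V})=1$ this gives $\Nm(\bm{V}^{-\top})=D_P^{-2}=|\det\bm{V}|^{-2}$. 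The main obstacle is the third step: producing the clean factorization $D\bm{V}^{-\top}=\bm{V}B$ and checking that $B$ is unimodular — equivalently, that the Euclidean dual of $\bm{V}(\Z^d)$ is exactly $\tfrac{1}{P'(\xi)}$ times the original lattice, which is the codifferent computation for the order $\Z[\xi]$. Everything else collapses onto the integrality fact established in the first paragraph.
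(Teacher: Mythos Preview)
Your proof is correct. The paper does not actually prove this proposition; it merely cites external references (\cite{Kac16}, \cite{GrLekk87}, \cite{Marcus}) for the argument, so there is no in-paper proof to compare against.

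Your approach is the standard algebraic-number-theoretic one, carried out concretely in matrix language. The first part (reading $(\bm{V}\bm{k})_i=g(\xi_i)$ and invoking the integrality of the norm/resultant) is exactly the classical proof that $\Nm(\bm{V})=1$. The third part is the interesting one: your factorization $D\bm{V}^{-\top}=\bm{V}B$ with $B$ unimodular is precisely the matrix incarnation of Euler's formula for the codifferent of the monogenic order $\Z[\xi]$, namely $\Z[\xi]^\vee=\tfrac{1}{P'(\xi)}\Z[\xi]$. Making this explicit via the divided-difference expansion $\frac{P(x)-P(t)}{x-t}=\sum_m b_m(t)x^m$ is clean and avoids any appeal to trace duality or the different ideal as a black box, so the argument is genuinely elementary and self-contained. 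The references the paper cites would establish \eqref{DualityOfNorm} through essentially the same mechanism (Euler's codifferent formula), just phrased in the language of number fields rather than matrices; your version has the advantage of being readable without that background.
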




The necessary prequisites of $P$ can be reformulated with concepts of algebraic number theory:
$P$ is the minimal polynomial of an algebraic integer of order $d$.
For the proof of this statement we refer to \cite{Kac16}, 
or \cite{GrLekk87} and \cite{Marcus} for a thorough introduction into the theory of algebraic integers.
The quantity \eqref{DualityOfNorm}
has a direct impact on the convergence behavior of the Frolov cubature formula
and we therefore are interested in polynomials which maximize this quantity for a fixed $d$,
i.e. have a small (or the smallest) polynomial discriminant $D_P$, cf. Figure \ref{2dLattices}.

\begin{figure}[t]
\centering
\begin{subfigure}[b]{0.22\textwidth}
\centering
\includegraphics[width=\textwidth]{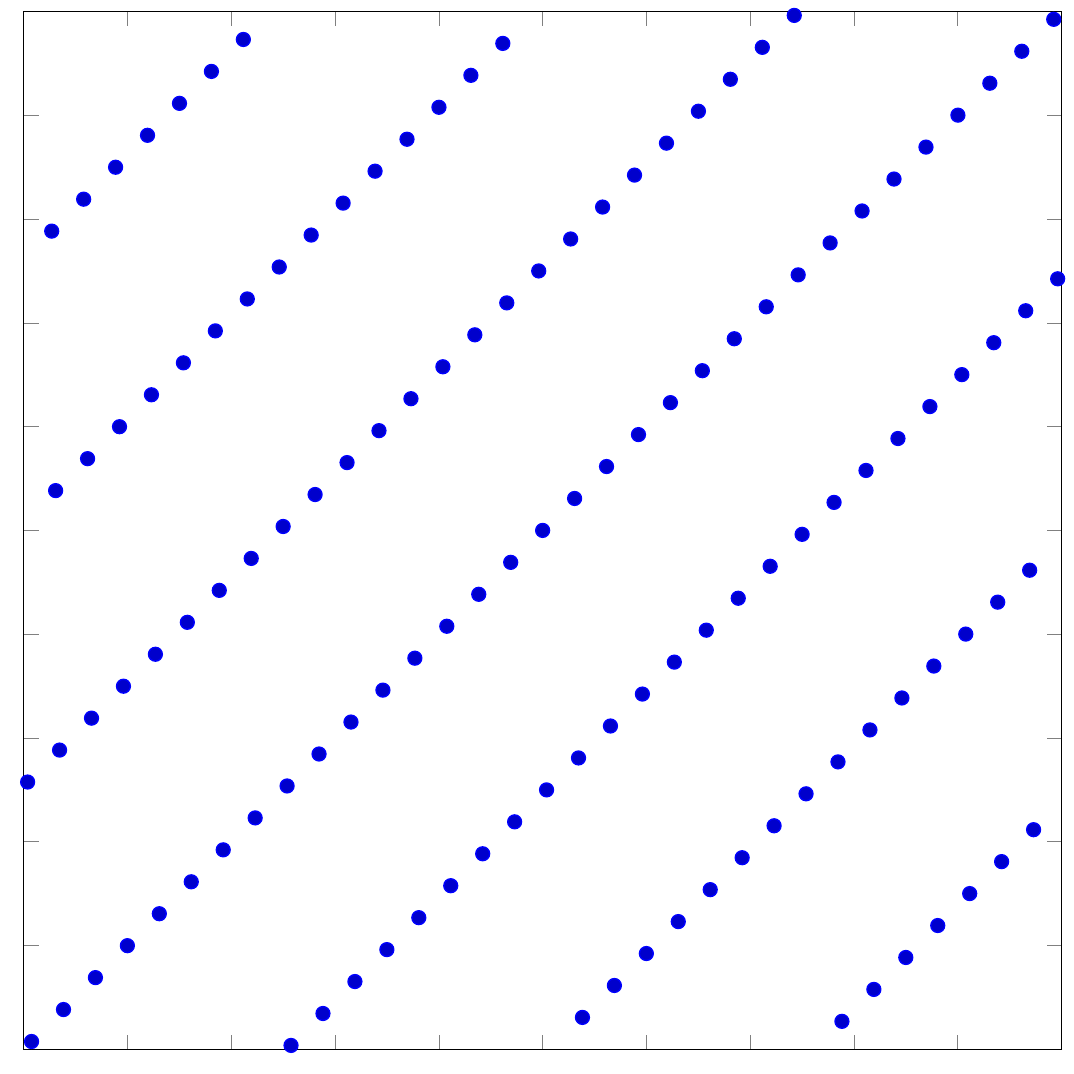}
\caption{ \small{$P(x) = x^2-17$, \\$D_P = 2\sqrt{17}$}}
\end{subfigure}
\begin{subfigure}[b]{0.22\textwidth}
\centering
\includegraphics[width=\textwidth]{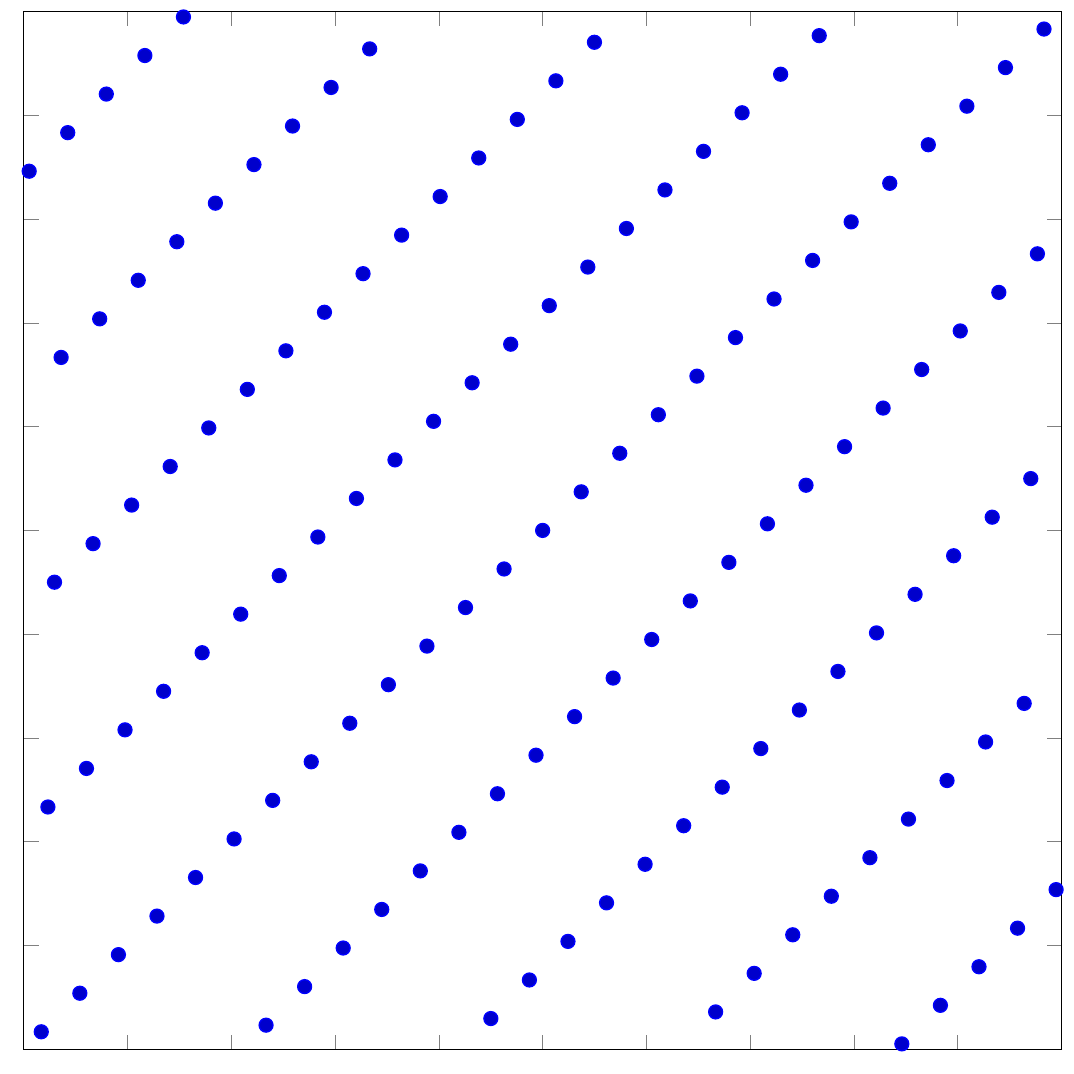}
\caption{ \small{$P(x) = x^2-8$, \\$D_P = 2\sqrt{8}$}}
\end{subfigure}
\begin{subfigure}[b]{0.22\textwidth}
\centering
\includegraphics[width=\textwidth]{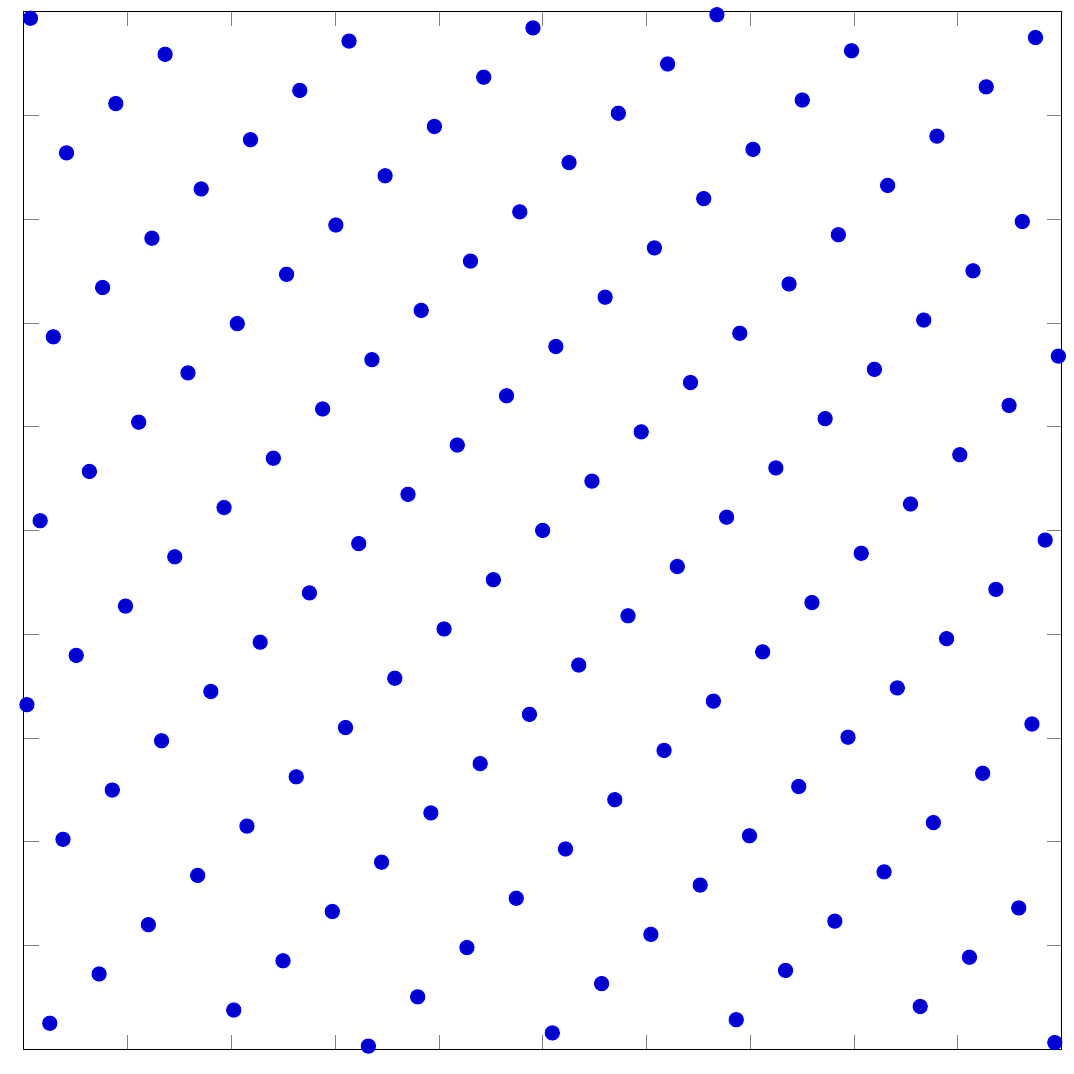}
\caption{ \small{$P(x) = x^2-3$, \\$D_P = 2\sqrt{3}$}}
\end{subfigure}
\begin{subfigure}[b]{0.22\textwidth}
\centering
\includegraphics[width=\textwidth]{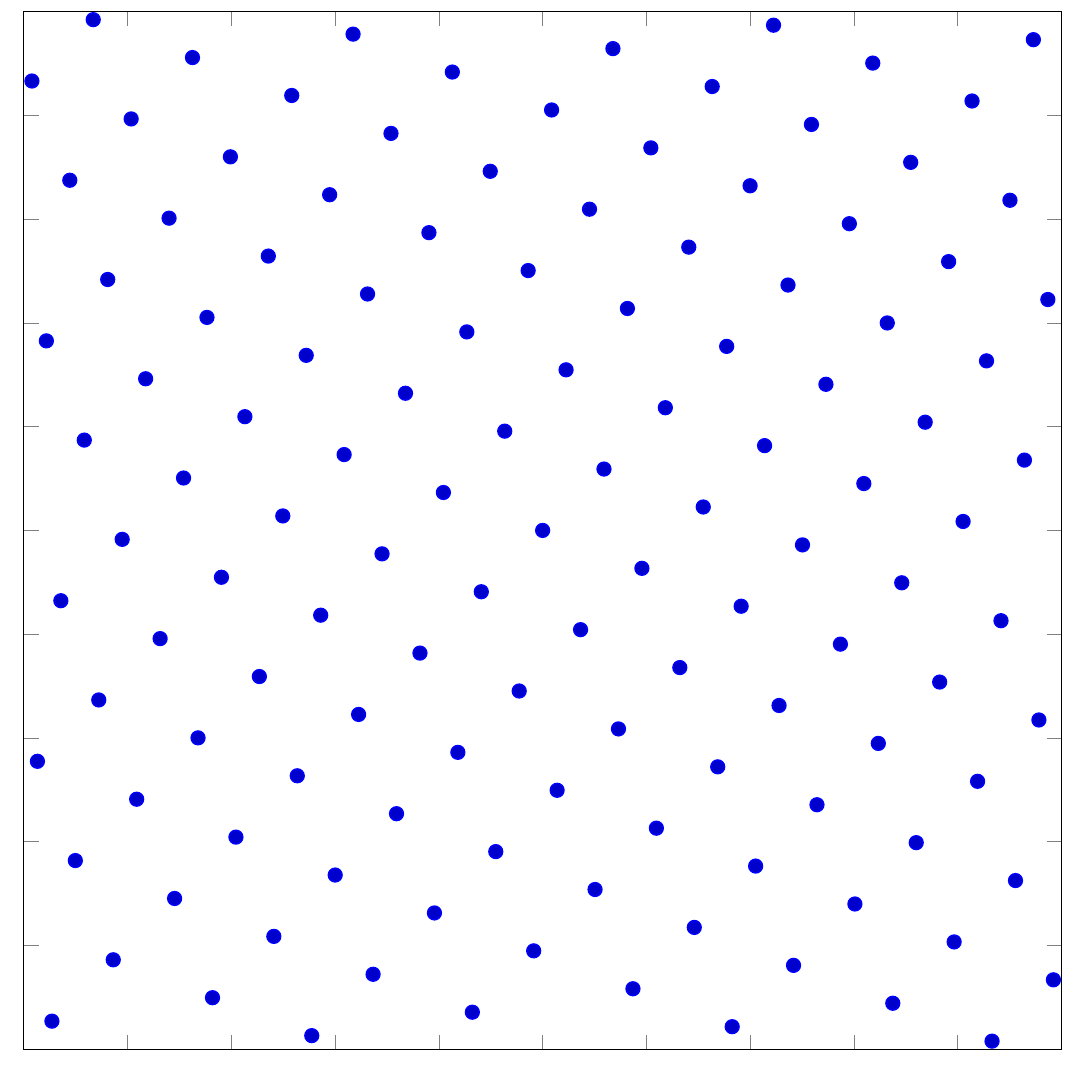}
\caption{ \small{$P(x) = x^2-x-1$, \\$D_P = \sqrt{5}$}}
\end{subfigure}
\caption{Lattices corresponding to different polynomials for $d=2$.
A small discriminant correlates with a good distribution of lattice points.}
\label{2dLattices}
\end{figure}

Using Proposition \ref{construction}, we obtain the lattice $\bm{V}(\Z^d)$ represented by a Vandermonde matrix $\bm{V}$.
There are two problems with such matrices from the numerical point of view: 
First, they have large column vectors and therefore a large condition number, 
and second, its entries are of the form $\bm{V}_{ij} = \xi_i^{j-1}$
for which the calculation gets unstable for increasing $j$. 
However, we can bypass this problem using special polynomials, which will be discussed in the next section.
\begin{lemma}\label{LatRep}
Let $P$ be a polynomial which satisfies the prequisites in Proposition \ref{construction},
and has roots $\xi_1,\ldots,\xi_d$ which lie in $(-2, 2)$.
Furthermore, let $\omega_1,\ldots,\omega_d\in (-1,1)$ be defined via the equation
\begin{equation*}
2\cos(\pi\omega_k) = \xi_k\,,\quad k=1,\ldots,d\,.
\end{equation*}
The lattice $\bm{V}(\Z^d)$ generated by the associated Vandermonde matrix
\begin{equation*}
    \bm{V} = \left(\begin{array}{cccc}
                1 & \xi_1 & \cdots & \xi_1^{d-1}\\
                1 & \xi_2 & \cdots & \xi_2^{d-1}\\
                \vdots & \vdots & \ddots & \vdots\\
                1 & \xi_{d} & \cdots & \xi_d^{d-1}
              \end{array}\right)\,\in\GLdR
\end{equation*}
is also generated by the matrix $\bm{T}$ with
\begin{equation*}
\bm{T}_{kl} =
\begin{cases}
	1&
    l=1\,,\\
	2\cos\left(\pi (l-1)\omega_k\right)&
    l=2,\dots, d\,.
\end{cases}
\end{equation*}
\end{lemma}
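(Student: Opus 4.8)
The plan is to exhibit an integer matrix $\bm{U}\in\SLdZ$ with $\bm{T}=\bm{V}\bm{U}$; since right multiplication by an element of $\SLdZ$ leaves the generated lattice unchanged (as recalled before Proposition~\ref{construction}), this immediately yields $\bm{T}(\Z^d)=\bm{V}(\Z^d)$. The matrix $\bm{U}$ will be the transition matrix between two polynomial bases of the space of polynomials of degree less than $d$, evaluated at the roots $\xi_k$.

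First I would record the relevant Chebyshev-type identity. For each $n\ge 0$ there is a monic polynomial $C_n\in\Z[x]$ of degree $n$ with $C_n(2\cos\phi)=2\cos(n\phi)$; these are defined by $C_0=2$, $C_1(x)=x$ and the recurrence $C_{n+1}(x)=x\,C_n(x)-C_{n-1}(x)$, from which monicity, integrality and the trigonometric identity follow by a one-line induction using $2\cos\phi\cdot 2\cos(n\phi)=2\cos((n+1)\phi)+2\cos((n-1)\phi)$. With $\xi_k=2\cos(\pi\omega_k)$ this gives, for $l\ge 2$,
\[
\bm{T}_{kl}=2\cos(\pi(l-1)\omega_k)=C_{l-1}(\xi_k),
\]
while the first column is $\bm{T}_{k1}=1=p_0(\xi_k)$ with $p_0\equiv 1$. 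The hypothesis that the roots lie in $(-2,2)$ is exactly what makes the angles $\omega_k$, and hence $\bm{T}$, well defined.

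Next I would set $p_{l-1}:=C_{l-1}$ for $l\ge 2$ and write each basis polynomial as $p_{l-1}(x)=\sum_{m=1}^{l}u_{ml}\,x^{m-1}$, collecting the coefficients into $\bm{U}=(u_{ml})_{m,l=1}^d$. Because $p_{l-1}$ has degree $l-1$ the matrix $\bm{U}$ is upper triangular; because $C_{l-1}$ is monic (and $p_0\equiv 1$) its diagonal entries are all $1$; and because the $C_n$ have integer coefficients, $\bm{U}$ is an integer matrix. Hence $\det\bm{U}=1$ and $\bm{U}\in\SLdZ$. Substituting the expansion of $p_{l-1}$ and using $\bm{V}_{km}=\xi_k^{m-1}$ gives the entrywise identity
\[
\bm{T}_{kl}=\sum_{m=1}^{l}u_{ml}\,\xi_k^{m-1}=\sum_{m=1}^{d}\bm{V}_{km}\,u_{ml}=(\bm{V}\bm{U})_{kl},
\]
that is $\bm{T}=\bm{V}\bm{U}$, and the claim follows.

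The only place requiring genuine care is the determinant of $\bm{U}$: had one used the ``natural'' value $2\cos(0)=2$ in the first column instead of $1$, the corresponding transition matrix would have determinant $2$ and would generate merely an index-two sublattice of $\bm{V}(\Z^d)$. The deliberate normalization $\bm{T}_{k1}=1$ is precisely what forces $\det\bm{U}=1$, so I would flag this as the subtle point rather than the Chebyshev bookkeeping, which is routine.
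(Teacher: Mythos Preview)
Your proof is correct and follows essentially the same approach as the paper: exhibit an upper-triangular integer matrix $\bm{U}$ with unit diagonal such that $\bm{T}=\bm{V}\bm{U}$, whence $\bm{U}\in\SLdZ$ and the lattices coincide. The only difference is computational: the paper (following \cite{Kac16,KaOeUl17}) verifies the key identity $2\cos(n\phi)=C_n(2\cos\phi)$ by expanding $(e^{i\phi}+e^{-i\phi})^l$ via Euler's formula and the binomial theorem, whereas you obtain it more economically from the three-term recurrence $C_{n+1}=xC_n-C_{n-1}$ together with the product-to-sum formula; your remark about the normalization $\bm{T}_{k1}=1$ versus $C_0=2$ is also to the point.
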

The resulting matrix $\bm{T}$ has entries in $(-2,2)$ that can be calculated in a numerically stable way,
which is optimal for our purposes.
The proof is a straightforward application of Euler's identity
and can be found in \cite{Kac16, KaOeUl17}.

Finally,
we specify our choice of the matrices $\bm{A}$ and $\bm{A}_\scalingFactor$ used in Frolov's cubature formula \eqref{eq:Q} as
\begin{equation}
\bm{A} = |\det \bm{T}|^{-1/d}\bm{T}, \quad \bm{A}_\scalingFactor = \scalingFactor^{-1/d}\bm{A}
\end{equation}
with $\bm{T}$ as in Lemma \ref{LatRep}.

\section{Improved generating polynomials} \label{sec:polynomials}
In this section, we consider polynomials which can be used to create admissible lattices.
We will call such polynomials \emph{admissible}, 
i.e. a $d$-th order polynomial $P$ is admissible if it satisfies the prequisites of Proposition \ref{construction}.
At the end of this section,
we provide a list of admissible polynomials with small discriminant for $d=2,\ldots,10$.

The study of Chebyshev Polynomials of the first and second kind 
provides us with a wide range of admissible polynomials. 
The most important features are their real and pairwise different roots, 
as well as the narrow distribution thereof. 
It is also quite fortunate to us that the decomposition into irreducible factors is well-understood 
and can be stated explicitly, see \cite{RTW98}.
\begin{definition}
The Chebyshev Polynomials of the first kind $T_d(x)$ are defined recursively via
\begin{eqnarray*}
T_0(x) &=& 1,\\ T_1(x) &=& x,\\ T_d(x) &=& 2xT_{d-1}(x) - T_{d-2}(x),\quad d\geq 2\,.
\end{eqnarray*}
The Chebyshev Polynomials of the second kind $U_d(x)$ are defined recursively via
\begin{eqnarray*}
U_0(x) &=& 1,\\ U_1(x) &=& 2x,\\ U_d(x) &=& 2xU_{d-1}(x) - U_{d-2}(x),\quad d\geq 2\,.
\end{eqnarray*}
\end{definition}
\begin{lemma}
The Chebyshev polynomial $T_d(x)$ has the roots
\begin{equation*}
\cos\left(\frac{\pi (2k-1)}{2d}\right),\quad k=1,\dots, d\,.
\end{equation*}
The Chebyshev polynomial $U_d(x)$ has the roots
\begin{equation*}
\cos\left(\frac{\pi k}{d+1}\right),\quad k=1,\dots, d\,.
\end{equation*}
\end{lemma}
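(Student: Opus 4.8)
The plan is to reduce both assertions to the classical trigonometric representations $T_d(\cos\theta)=\cos(d\theta)$ and $U_d(\cos\theta)\,\sin\theta=\sin((d+1)\theta)$, each of which I would establish by induction on $d$ directly from the defining recurrences. Setting $x=\cos\theta$, the base cases are immediate: $T_0(\cos\theta)=1=\cos(0)$ and $T_1(\cos\theta)=\cos\theta$, while $U_0(\cos\theta)=1$ and $U_1(\cos\theta)=2\cos\theta$ agree with $\sin\theta/\sin\theta$ and $\sin(2\theta)/\sin\theta=2\cos\theta$. For the inductive step on $T_d$, I would substitute $x=\cos\theta$ into $T_d(x)=2xT_{d-1}(x)-T_{d-2}(x)$ and apply the product-to-sum identity $2\cos\theta\cos((d-1)\theta)=\cos(d\theta)+\cos((d-2)\theta)$; the term $\cos((d-2)\theta)$ cancels against $T_{d-2}(\cos\theta)=\cos((d-2)\theta)$ and leaves exactly $\cos(d\theta)$. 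The step for $U_d$ is entirely analogous: multiplying the recurrence by $\sin\theta$ and using $2\cos\theta\,\sin(d\theta)=\sin((d+1)\theta)+\sin((d-1)\theta)$, the $\sin((d-1)\theta)$ contributed by $U_{d-2}$ cancels and yields $\sin((d+1)\theta)$.

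With these identities available, the roots follow from a counting argument. For $T_d$, set $\theta_k=\pi(2k-1)/(2d)$ for $k=1,\dots,d$. Then $T_d(\cos\theta_k)=\cos(d\theta_k)=\cos\bigl(\pi(2k-1)/2\bigr)=0$, so each $\cos\theta_k$ is a root. The arguments $\theta_k$ lie in $(0,\pi)$ and are pairwise distinct, and since $\cos$ is injective on $[0,\pi]$, the values $\cos\theta_1,\dots,\cos\theta_d$ are $d$ distinct numbers. Because $T_d$ has degree $d$ and hence at most $d$ roots, these are precisely all of them.

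For $U_d$, set $\theta_k=\pi k/(d+1)$ for $k=1,\dots,d$. Here one must be slightly careful that the representation $U_d(\cos\theta)=\sin((d+1)\theta)/\sin\theta$ is only valid where $\sin\theta\neq 0$; this is exactly where the counting must be confined to $\theta\in(0,\pi)$. Since each $\theta_k\in(0,\pi)$, we have $\sin\theta_k\neq 0$, and $U_d(\cos\theta_k)=\sin(\pi k)/\sin\theta_k=0$. Again the $\theta_k$ are distinct in $(0,\pi)$, so injectivity of $\cos$ produces $d$ distinct roots, which exhausts the degree-$d$ polynomial $U_d$.

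The argument is short and elementary; the only genuine care needed — what I would flag as the main (minor) obstacle — is the completeness step: one must verify that the listed values are pairwise distinct and that there are exactly $d$ of them, so that the degree bound forces them to be \emph{all} the roots rather than merely \emph{some} of them. For $U_d$ this is coupled with the restriction $\theta\in(0,\pi)$ forced by the removable singularity of $\sin((d+1)\theta)/\sin\theta$ at the endpoints, but this causes no loss since all sought roots correspond to interior angles.
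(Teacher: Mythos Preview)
Your proof is correct and is the standard textbook argument for this well-known fact. The paper does not actually supply a proof of this lemma; it is stated as a known result and used as a building block for the subsequent factorization lemmas, so there is no ``paper's own proof'' to compare against beyond noting that what you wrote is precisely the classical derivation one would cite.
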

The polynomials $T_d(x)$ and $U_d(x)$ are not admissible since they do not have leading coefficient $1$.
But they can be scaled appropriately to achieve this.
\begin{lemma}
The scaled Chebyshev polynomials ${\widetilde{T}}_d(x) = 2T_d(x/2)$
and ${\widetilde{U}}_d(x) = U_d(x/2)$ have leading coefficient $1$ and belong to $\Z[x]$.
The scaled Chebyshev polynomial ${\widetilde{T}}_d(x)$ has the roots
\begin{equation*}
t_{d,k} = 2\cos\left(\frac{\pi (2k-1)}{2d}\right),\quad k=1,\dots, d\,.
\end{equation*}
The scaled Chebyshev polynomial ${\widetilde{U}}_d(x)$ has the roots
\begin{equation*}
u_{d,k} = 2\cos\left(\frac{\pi k}{d+1}\right),\quad k=1,\dots, d\,.
\end{equation*}
\end{lemma}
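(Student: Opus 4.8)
The plan is to treat the two families $\widetilde{T}_d$ and $\widetilde{U}_d$ by the same mechanism: induction on $d$ driven by the Chebyshev recursions, combined with the explicit root descriptions from the preceding lemma. The root statements require almost no work, and the only part with any content is the integer-coefficient claim, which is settled by a short induction once the recursions have been rescaled.

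For the roots I would argue directly. Since $\widetilde{T}_d(x) = 2T_d(x/2)$ vanishes exactly when $T_d(x/2) = 0$, and the preceding lemma lists the $d$ distinct roots $\cos(\pi(2k-1)/(2d))$ of $T_d$, the zeros of $\widetilde{T}_d$ are precisely those $x$ with $x/2 = \cos(\pi(2k-1)/(2d))$, i.e. $t_{d,k} = 2\cos(\pi(2k-1)/(2d))$ for $k = 1,\dots,d$. These are $d$ distinct reals and $\widetilde{T}_d$ has degree $d$, so they exhaust its roots. Applying the same substitution $\xi \mapsto 2\xi$ to the roots $\cos(\pi k/(d+1))$ of $U_d$ gives the claimed roots $u_{d,k} = 2\cos(\pi k/(d+1))$ of $\widetilde{U}_d(x) = U_d(x/2)$.

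For the integer-coefficient and leading-coefficient claims I would first rewrite the defining recursions under the substitution $x \mapsto x/2$. Replacing $x$ by $x/2$ in $T_d(x) = 2x\,T_{d-1}(x) - T_{d-2}(x)$ and multiplying through by $2$ yields $\widetilde{T}_d(x) = x\,\widetilde{T}_{d-1}(x) - \widetilde{T}_{d-2}(x)$; the same substitution in the $U$-recursion yields $\widetilde{U}_d(x) = x\,\widetilde{U}_{d-1}(x) - \widetilde{U}_{d-2}(x)$. The key point is that the $2T_d(x/2)$ normalization (respectively the plain $U_d(x/2)$) absorbs the leading factor $2$ coming from $2x$, so both rescaled recursions now have integer coefficients. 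The base cases $\widetilde{T}_0 = 2$, $\widetilde{T}_1 = x$ and $\widetilde{U}_0 = 1$, $\widetilde{U}_1 = x$ lie in $\Z[x]$, so an immediate induction gives $\widetilde{T}_d, \widetilde{U}_d \in \Z[x]$ for all $d$. The same induction handles the leading coefficient: if $\widetilde{T}_{d-1}$ is monic of degree $d-1$, then $x\,\widetilde{T}_{d-1}(x)$ is monic of degree $d$ while $\widetilde{T}_{d-2}$ has degree $d-2$, so $\widetilde{T}_d$ is monic; the argument for $\widetilde{U}_d$ is identical.

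There is no genuine obstacle here; the statement is routine once the recursions are rescaled. The only point demanding a moment's care is the bookkeeping of the factor $2$ — the asymmetry between the $2T_d(x/2)$ normalization for the first kind and the plain $U_d(x/2)$ for the second — which is precisely what forces both rescaled recursions into the common integer-coefficient form $x\,p_{d-1} - p_{d-2}$ with matching base cases. Once that is pinned down, the single induction closes all four assertions at once.
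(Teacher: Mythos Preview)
Your proof is correct. The paper states this lemma without proof, treating it as a routine consequence of the previous lemma and the recursions; your argument --- rescaling the recursion to the integer-coefficient form $p_d = x\,p_{d-1} - p_{d-2}$ and inducting from the base cases --- is exactly the standard way to fill this in, and nothing more is needed.
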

>From this lemma it follows directly that irreducible factors of ${\widetilde{T}}_d(x)$
and ${\widetilde{U}}_d(x)$ are admissible and have roots that lie in $(-2,2)$.
As already stated above, \cite{RTW98} lists the complete decomposition of Chebyshev Polynomials into irreducible factors, 
which we reformulate for the scaled versions in the next lemma.
\begin{lemma}\label{factorization}
For a fixed $d>1$, we have 
\begin{equation*}
{\widetilde{T}}_d(x) = \prod_h D_{d,h}(x)\,,
\end{equation*}
where $h\leq d$ runs through all odd positive divisors of $d$ and
\begin{equation*}
D_{d,h}(x) = \prod_{\substack{k=1 \\ \gcd(2k-1,d)=h}}^d(x-t_{d,k})
\end{equation*}
are all irreducible. It also holds 
\begin{equation*}
{\widetilde{U}}_d(x) = \prod_h E_{d,h}(x)\,,
\end{equation*}
where $h\leq d$ runs through all positive divisors of $2d+2$ and
\begin{equation*}
E_{d,h}(x) = \prod_{\substack{k=1\\\gcd(k,2d+2)=h}}^d(x-u_{d,k})
\end{equation*}
are all irreducible.
\end{lemma}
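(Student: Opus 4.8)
The plan is to translate the factorization into the language of cyclotomic fields, where each claimed factor will be recognized as the minimal polynomial of a number of the form $2\cos(2\pi/N)$, and then to match the combinatorial $\gcd$-conditions against the orders of the associated roots of unity. The single nontrivial ingredient is the classical fact that, for $N\geq 3$, the number $2\cos(2\pi/N)=\zeta_N+\zeta_N^{-1}$ with $\zeta_N:=\euler^{2\pi\imag/N}$ is an algebraic integer generating the maximal real subfield $\Q(\zeta_N)^+$ of $\Q(\zeta_N)$; consequently its minimal polynomial $\psi_N\in\Z[x]$ is monic, irreducible, of degree $\varphi(N)/2$, with roots exactly $\{2\cos(2\pi a/N):1\le a<N/2,\ \gcd(a,N)=1\}$. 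This is the content we borrow from \cite{RTW98} (ultimately from the irreducibility of $\Phi_N$), and it is where all the genuine difficulty sits.

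First I would rewrite the roots from the preceding lemma as sums of roots of unity. For $\widetilde{T}_d$ one has
\[
t_{d,k}=2\cos\Big(\frac{2\pi(2k-1)}{4d}\Big)=\zeta_{4d}^{\,2k-1}+\zeta_{4d}^{-(2k-1)},
\]
and for $\widetilde{U}_d$,
\[
u_{d,k}=2\cos\Big(\frac{2\pi k}{2d+2}\Big)=\zeta_{2d+2}^{\,k}+\zeta_{2d+2}^{-k}.
\]
Since the roots are pairwise distinct and $\widetilde{T}_d,\widetilde{U}_d$ are monic of degree $d$, it suffices to partition the index set $\{1,\dots,d\}$ and identify each block with a complete set of conjugates of a suitable $2\cos(2\pi/N)$.

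The key bookkeeping is the order computation. For $\widetilde{T}_d$, because $2k-1$ is odd I would show $\gcd(2k-1,4d)=\gcd(2k-1,d)=:h$, an odd divisor of $d$, so that $\zeta_{4d}^{2k-1}$ is a primitive $(4d/h)$-th root of unity; writing $a:=(2k-1)/h$ gives $t_{d,k}=2\cos(2\pi a/N)$ with $N:=4d/h$ and $\gcd(a,N)=1$. As $k$ runs over the block $\{k:\gcd(2k-1,d)=h\}$, the exponent $a$ runs exactly once over the residues coprime to $N$ in $[1,N/2)$ (here $N/2=2d/h$, and coprimality to $4\mid N$ forces $a$ odd automatically, matching the odd range $1\le 2k-1\le 2d-1$). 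Hence $D_{d,h}=\psi_{4d/h}$ is irreducible, and the surviving blocks are indexed precisely by the odd divisors $h$ of $d$. The argument for $\widetilde{U}_d$ is identical with $N:=(2d+2)/h$ and $a:=k/h$: the condition $\gcd(k,2d+2)=h$ makes $\zeta_{2d+2}^{k}$ a primitive $N$-th root of unity, the surviving blocks are those with $h\mid 2d+2$ and $h\le d$ (the excluded divisors $d+1$ and $2d+2$ correspond to the roots $\mp 2$, not attained for $1\le k\le d$), and each block realizes all $\varphi(N)/2$ conjugates, so $E_{d,h}=\psi_{(2d+2)/h}$ is irreducible. Collecting the disjoint blocks reproduces $\widetilde{T}_d=\prod_h D_{d,h}$ and $\widetilde{U}_d=\prod_h E_{d,h}$.

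The main obstacle is not the combinatorics but verifying that each block is a full Galois orbit of size $\varphi(N)/2$ with no repetitions — equivalently, that the half-range $a\in[1,N/2)$ is a faithful set of representatives for $(\Z/N\Z)^\times/\{\pm1\}$. Granting the minimal-polynomial fact this becomes a short counting check: the pairing $a\mapsto N-a$, i.e. $k$ to its complementary index, swaps the two halves and is fixed-point-free on coprime residues, so the ranges $1\le 2k-1\le 2d-1$ and $1\le k\le d$ each select exactly one representative per pair. I would finish by noting that the coefficients of $D_{d,h}$ and $E_{d,h}$ lie in $\Z$ automatically, since $\zeta_N+\zeta_N^{-1}$ is an algebraic integer, consistent with $\widetilde{T}_d,\widetilde{U}_d\in\Z[x]$.
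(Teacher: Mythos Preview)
Your proposal is correct. The paper itself does not prove this lemma; it merely states that the decomposition is a reformulation, for the scaled polynomials $\widetilde T_d$ and $\widetilde U_d$, of the factorization given in \cite{RTW98}. You have supplied the argument that the paper omits: identifying each factor $D_{d,h}$ (resp.\ $E_{d,h}$) with the minimal polynomial $\psi_N$ of $2\cos(2\pi/N)$ for $N=4d/h$ (resp.\ $N=(2d+2)/h$), and checking via the pairing $a\leftrightarrow N-a$ on $(\Z/N\Z)^\times$ that the index range $k\in\{1,\dots,d\}$ selects exactly one representative from each conjugate pair. The counting is clean once one notes that $\gcd(2k-1,4d)=\gcd(2k-1,d)$ (oddness of $2k-1$) and that $\gcd(n,d)=h$ is equivalent to $h\mid n$ together with $\gcd(n/h,d/h)=1$. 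The boundary cases are handled correctly: for $\widetilde T_d$ one always has $N=4d/h\ge4$, and for $\widetilde U_d$ the constraint $h\le d$ forces $N\ge3$, so the degree formula $\varphi(N)/2$ applies throughout. In short, your route is the natural cyclotomic-field argument behind \cite{RTW98}, and it is more informative than the bare citation the paper gives.
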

It has been shown in \cite{tem93} that ${\widetilde{T}}_d(x)$ is irreducible for $d=2^m, m\in\N$
and that the corresponding lattice is orthogonal \cite{KaOeUl17}.
However, in this paper we are more interested in the irreducible factors of ${\widetilde{U}}_d(x)$, 
mainly for two reasons. 
First, it can be easily seen that the irreducible factors of ${\widetilde{T}}_d(x)$ have paired roots, i.e.
\begin{equation*}
D_{d,h}(t_{d,k}) = 0 \Rightarrow D_{d,h}(t_{d,d-k+1}) = 0\,.
\end{equation*}
This means that either $D_{d,h}(x) = x$ or $D_{d,h}(x)$ is a polynomial of even degree, 
limiting the usefulness to our purposes. 
Second, it appears to be the case that the discriminant of ${\widetilde{U}}_d(x)$
is smaller than the discriminant of ${\widetilde{T}}_d(x)$,
which makes the factors of ${\widetilde{U}}_d(x)$ more attractive to us.
The following lemma is a consequence of Lemma \ref{factorization}.

\begin{sidewaystable}
\centering
\begin{tabular}{|c|c|l|c|}
\hline
dimension $d$ & notation & polynomial \& roots & discriminant $D_P$\\
\hline
\multirow{2}{*}{2} & \multirow{2}{*}{$E_{4,2}(x)$} & $x^2+x-1$ & \multirow{2}{*}{$2.24$} \\
&&$2\cos\left(\pi \frac{2i}{2d+1} \right),\, i=1,\ldots,d$&\\
\hline
\multirow{2}{*}{3} & \multirow{2}{*}{$E_{6,2}(x)$} & $x^3+x^2-2x-1$ & \multirow{2}{*}{$7$} \\
&&$2\cos\left(\pi \frac{2i}{2d+1} \right),\, i=1,\ldots,d$&\\
\hline
\multirow{2}{*}{4} & \multirow{2}{*}{$E_{14,2}(x)$} & $x^4-x^3-4x^2+4x+1$ & \multirow{2}{*}{$33.54$} \\
&&$2\cos\left(\pi \frac{2}{15}\right),2\cos\left(\pi \frac{4}{15}\right),2\cos\left(\pi \frac{8}{15}\right),2\cos\left(\pi \frac{14}{15}\right)$&\\
\hline
\multirow{2}{*}{5} & \multirow{2}{*}{$E_{10,2}(x)$} & $x^5+x^4-4 x^3-3 x^2+3 x+1$ & \multirow{2}{*}{$121$} \\
&&$2\cos\left(\pi \frac{2i}{2d+1} \right),\, i=1,\ldots,d$&\\
\hline
\multirow{2}{*}{6} & \multirow{2}{*}{$E_{12,2}(x)$} & $x^6+x^5-5 x^4-4 x^3+6 x^2+3 x-1$ & \multirow{2}{*}{$609.34$} \\
&&$2\cos\left(\pi \frac{2i}{2d+1} \right),\, i=1,\ldots,d$&\\
\hline
\multirow{2}{*}{7} & \multirow{2}{*}{$P_7(x)$} & $x^7+x^6-6x^5-4x^4+10x^3+4x^2-4x-1$ & \multirow{2}{*}{$4487.14$} \\
&&no explicit formula available&\\
\hline
\multirow{2}{*}{8} & \multirow{2}{*}{$E_{16,2}(x)$} & $x^8+x^7-7 x^6-6 x^5+15 x^4+10 x^3-10 x^2-4 x+1$ & \multirow{2}{*}{$20256.8$} \\
&&$2\cos\left(\pi \frac{2i}{2d+1} \right),\, i=1,\ldots,d$&\\
\hline
\multirow{2}{*}{9} & \multirow{2}{*}{$E_{18,2}(x)$} & $x^9+x^8-8x^7-7x^6+21x^5+15x^4-20x^3-10x^2+5x+1$ & \multirow{2}{*}{$130321$} \\
&&$2\cos\left(\pi \frac{2i}{2d+1} \right),\, i=1,\ldots,d$&\\
\hline
\multirow{3}{*}{10} & \multirow{3}{*}{$E_{24,2}(x)$} & $x^{10}-10x^8+35x^6+x^5-50x^4-5x^3+25x^2+5x-1$ & \multirow{3}{*}{$873464$} \\
&&$2\cos\left(\pi \frac{2}{25}\right),2\cos\left(\pi \frac{4}{25}\right),2\cos\left(\pi \frac{6}{25}\right),2\cos\left(\pi \frac{8}{25}\right),2\cos\left(\pi \frac{12}{25}\right),$&\\
&&$2\cos\left(\pi \frac{14}{25}\right),2\cos\left(\pi \frac{16}{25}\right),2\cos\left(\pi \frac{18}{25}\right),2\cos\left(\pi \frac{22}{25}\right),2\cos\left(\pi \frac{24}{25}\right)$&\\
\hline
\end{tabular}
\caption{Admissible polynomials with small discriminants for $d=2,\ldots, 10$.}
\label{PolTable}
\end{sidewaystable}

\begin{lemma}
Let $d>1$. If $p=2d+1$ is a prime, the $d$th-order polynomial
\begin{equation*}
E_{2d,2}(x) = \prod_{k=1}^d (x-u_{2d,2k})
\end{equation*}
is admissible.
\end{lemma}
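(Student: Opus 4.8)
The plan is to recognize the polynomial $E_{2d,2}(x)$ of the statement as precisely the irreducible factor of $\widetilde{U}_{2d}(x)$ carrying the index $h=2$ in Lemma~\ref{factorization}, and then to verify the four prerequisites of Proposition~\ref{construction} one at a time (recall that \emph{admissible} is defined to mean exactly that these four conditions hold). Applying Lemma~\ref{factorization} with $2d$ in place of $d$, the relevant modulus is $2(2d)+2 = 4d+2 = 2p$ with $p=2d+1$, and the factor indexed by $h=2$ is $E_{2d,2}(x) = \prod_{k}(x-u_{2d,k})$, the product ranging over those $1\le k\le 2d$ with $\gcd(k,2p)=2$.

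The first step is to determine this index set $\{k : 1\le k\le 2d,\ \gcd(k,2p)=2\}$. Since $2$ and the odd prime $p$ are coprime, multiplicativity of the gcd gives $\gcd(k,2p)=\gcd(k,2)\,\gcd(k,p)$. For every $k$ in the range $1\le k\le 2d = p-1$ one has $\gcd(k,p)=1$, because $p$ is prime and $k<p$; hence $\gcd(k,2p)=\gcd(k,2)$, which equals $2$ exactly when $k$ is even. The admissible indices are therefore $k=2,4,\dots,2d$, i.e.\ $k=2j$ with $j=1,\dots,d$, so the factor from Lemma~\ref{factorization} coincides with the product $\prod_{j=1}^d(x-u_{2d,2j})$ written in the statement. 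In particular $E_{2d,2}(x)$ has degree exactly $d$, and it is irreducible over $\Q$ directly by Lemma~\ref{factorization}. This short computation, which rests squarely on the primality of $p$ (and on the range bound $2d=p-1$ that forces $k<p$), is the only genuine content of the argument; the remaining checks are bookkeeping and constitute no real obstacle.

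It remains to verify the other three prerequisites. The polynomial is monic, being a product of monic linear factors. Its coefficients are integers: $\widetilde{U}_{2d}(x)$ is monic and lies in $\Z[x]$, and by Gauss's lemma its factorization into monic irreducible factors over $\Q$ is in fact a factorization in $\Z[x]$, so each $E_{2d,h}(x)$, in particular $E_{2d,2}(x)$, lies in $\Z[x]$. Finally, the roots $u_{2d,2k}=2\cos(2\pi k/p)$ for $k=1,\dots,d$ are real and pairwise distinct: the angles $2\pi k/p$ lie in the open interval $(0,\pi)$, since $2k/p\le 2d/p<1$, and cosine is strictly monotone there, so $k\mapsto u_{2d,2k}$ is injective. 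Thus $E_{2d,2}(x)$ meets all four conditions of Proposition~\ref{construction} and is admissible.
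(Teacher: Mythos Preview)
Your proof is correct and follows essentially the same route as the paper: compute $\gcd(k,2p)$ for $1\le k\le 2d$ using the primality of $p=2d+1$, conclude that the even $k$ give exactly the $d$ roots of $E_{2d,2}$, and invoke Lemma~\ref{factorization} for irreducibility. The paper's version is terser because the remaining prerequisites (monic, integer coefficients, distinct real roots in $(-2,2)$) were already noted right after the scaled-Chebyshev lemma to hold for \emph{any} irreducible factor of $\widetilde{U}_m$; your explicit verification via Gauss's lemma and monotonicity of cosine is fine but not new content.
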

\begin{proof}
Consider the factorization of ${\widetilde{U}}_{2d}(x)$. We have $2(2d)+2=4d+2=2p$, 
therefore we have for $1\leq k\leq 2d$
\begin{equation*}
\gcd(k,2(2d)+2) = 
\begin{cases}
1 & k\text{ odd} \\
2 & k\text{ even} \,.
\end{cases}
\end{equation*}
This implies that ${\widetilde{U}}_{2d}(x)=E_{2d,1}(x)E_{2d,2}(x)$, which both are of order $d$.  
\end{proof}
This simple rule covers the cases $d\in\{2,3,5,6,8,9\}$. 
For the cases $d=4$ and $d=10$ we also did find good factors. 
\begin{lemma}
The polynomial $E_{14,2}(x)$ is of order $4$ and admissible 
and the polynomial $E_{24,2}(x)$ is of order $10$ and admissible.
\end{lemma}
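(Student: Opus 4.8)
The plan is to check directly that each of $E_{14,2}$ and $E_{24,2}$ satisfies the four prerequisites of Proposition \ref{construction}, since admissibility of a polynomial was defined to mean exactly this. Three of these properties come essentially for free from the Chebyshev factorization in Lemma \ref{factorization}; the only genuine work is to determine the degree of each factor, which simultaneously verifies the claimed orders.

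First I would count the degrees. By definition the degree of $E_{m,2}$ equals the number of $k\in\{1,\ldots,m\}$ with $\gcd(k,2m+2)=2$. For $E_{14,2}$ we have $2m+2=30=2\cdot3\cdot5$, so $\gcd(k,30)=2$ means $k$ is even but divisible by neither $3$ nor $5$; among $1,\ldots,14$ these are $k\in\{2,4,8,14\}$, giving degree $4$. For $E_{24,2}$ we have $2m+2=50=2\cdot5^2$, so $\gcd(k,50)=2$ means $k$ is even and not divisible by $5$; among $1,\ldots,24$ these are $k\in\{2,4,6,8,12,14,16,18,22,24\}$, giving degree $10$. This confirms the orders $4$ and $10$ stated in the lemma and matches the root lists in Table \ref{PolTable}.

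Next I would dispose of the remaining three prerequisites, all of which follow from general structure. Irreducibility over $\Q$ is immediate from Lemma \ref{factorization}: since $2$ divides both $2\cdot14+2=30$ and $2\cdot24+2=50$, the factors $E_{14,2}$ and $E_{24,2}$ genuinely occur in the asserted decompositions of $\widetilde{U}_{14}$ and $\widetilde{U}_{24}$, and that lemma states every such factor is irreducible. Each $E_{m,2}$ is monic, being a product of monic linear factors $(x-u_{m,k})$, so the leading-coefficient-one condition holds. Integrality of the coefficients then follows by Gauss's lemma: $\widetilde{U}_m\in\Z[x]$ is monic and factors over $\Q$ into the monic irreducibles $E_{m,h}$, whence each factor already lies in $\Z[x]$. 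Finally, the roots $u_{m,k}=2\cos(\pi k/(m+1))$ are real, and for distinct $k\in\{1,\ldots,m\}$ the arguments $\pi k/(m+1)$ are distinct points of $(0,\pi)$, where cosine is injective, so the selected roots are pairwise distinct.

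Having verified all four prerequisites, I would conclude by invoking Proposition \ref{construction}, which then guarantees that each polynomial generates an admissible lattice, i.e. that $E_{14,2}$ and $E_{24,2}$ are admissible. I expect no conceptual obstacle here: the substantive step is the degree count, and the main risk is simply an arithmetic slip in enumerating the residues $k$ satisfying the $\gcd$ condition, so that is the computation I would double-check most carefully.
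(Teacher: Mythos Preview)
Your proposal is correct and follows essentially the same approach as the paper: the key step in both is the explicit enumeration of the $k$ with $\gcd(k,30)=2$ and $\gcd(k,50)=2$, yielding the sets $\{2,4,8,14\}$ and $\{2,4,6,8,12,14,16,18,22,24\}$ and hence the degrees $4$ and $10$. The paper simply states that the factors are ``admissible by definition'' (i.e.\ as irreducible factors from Lemma~\ref{factorization}), whereas you spell out the four prerequisites individually, but this is a difference in detail, not in substance.
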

\begin{proof}
    Both polynomials are admissible by definition, it remains to compute their order.
    We first consider $E_{14,2}(x)$.
    Here, $2\cdot 14 +2 = 30$, and for $1\leq k \leq 14$ one has $\gcd(k, 30) = 2$
    if and only if $k\in\{2,4,8,14\}$.
    Therefore, $E_{14,2}(x)$ is a polynomial of order $4$.
    Now consider $E_{24,2}(x)$.
    We have $2\cdot 24 + 2 = 50$, and for $1\leq k \leq 24$ one has $\gcd(k,50) = 2$
    if and only if $k\in\{2,4,6,8,12,14,16,18,22,24\}$.
    Therefore, $E_{24,2}(x)$ is a polynomial of order $10$.
\end{proof}
Unfortunately, the case $d=7$ is not covered by the factorization of all ${\widetilde{U}}_d(x)$ and ${\widetilde{T}}_d(x)$.
However, using a numerical brute force approach, we found the following polynomial.
\begin{lemma}
The polynomial 
\begin{equation*}
P_7(x) = x^7 + x^6 - 6x^5 - 4x^4 + 10x^3 + 4x^2 - 4x - 1
\end{equation*}
is of order 7 and admissible.
\end{lemma}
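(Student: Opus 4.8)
The plan is to verify directly the four defining properties of an admissible polynomial from Proposition~\ref{construction}, namely that $P_7$ is a monic integer polynomial of degree (``order'') $7$ that is irreducible over $\Q$ and has seven distinct real roots. Three of these are immediate: reading off the coefficients $(1,1,-6,-4,10,4,-4,-1)$ shows at once that $P_7$ is monic, has integer coefficients, and has degree $7$. Thus the entire content of the lemma lies in the two remaining claims: reality and simplicity of all seven roots, and irreducibility over $\Q$.

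For the roots, I would exhibit eight sample abscissae $a_0<a_1<\cdots<a_7$ inside the advertised range $(-2.25,1.75)$ at which $P_7$ takes \emph{alternating} signs. By the intermediate value theorem each open interval $(a_{i-1},a_i)$ then contains at least one real root, producing seven real roots that lie in seven pairwise disjoint intervals and are therefore pairwise distinct. Since $\deg P_7=7$, these account for all roots of $P_7$; in particular every root is real and simple. This is the self-contained version of a Sturm-sequence count, and the only work is the routine evaluation of $P_7$ at the chosen dyadic points.

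For irreducibility I would argue over $\Z$: by Gauss's lemma a monic integer polynomial is irreducible over $\Q$ if and only if it does not split into two monic integer polynomials of positive degree. A quick preliminary rules out linear factors through the rational root theorem---the only candidates are $\pm1$, and $P_7(1)=1$, $P_7(-1)=-1$ are both nonzero, so $P_7$ has no rational root. To exclude a proper factor of degree $m\in\{2,3,4,5\}$, I would reduce $P_7$ modulo two primes $p$ with $p\nmid D_P$; for such $p$ the reduction is separable, and the degree of any irreducible $\Q$-factor must be a subset-sum of the degrees of the irreducible factors of $P_7\bmod p$. Choosing a prime whose reduction factors with degrees $\{3,4\}$ and a second whose reduction factors with degrees $\{2,5\}$ suffices: a proper factor degree would have to be a subset-sum of $\{3,4\}$ \emph{and} of $\{2,5\}$, but the only proper subset-sums are $3,4$ and $2,5$ respectively, and $\{3,4\}\cap\{2,5\}=\emptyset$, so no proper factor can exist. (Should a single small prime already yield an irreducible reduction, that one fact proves the claim outright.)

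The main obstacle is irreducibility rather than the reality of the roots, and within it the delicate point is the choice of auxiliary primes: one must avoid the finitely many primes dividing the discriminant $D_P$ so that the degree-transfer principle applies, and one must land on reductions whose factorization patterns are jointly incompatible with $2+5$, $3+4$, and $2+2+3$. Because an irreducible polynomial of prime degree $7$ has a Galois group containing a $7$-cycle, the Frobenius density theorem guarantees that a positive proportion of primes reduce irreducibly, so a short search over small primes is certain to produce either one irreducible reduction or a complementary pair as above. I would carry out and record that search, checking the chosen reductions by a direct factorization over the relevant $\mathbb{F}_p$.
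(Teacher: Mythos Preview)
Your plan is correct, and it is in one respect more complete than the paper's own argument: you explicitly verify the existence of seven distinct real roots via sign changes, whereas the paper's proof treats only irreducibility and leaves the real-root property unmentioned. For irreducibility, however, the paper takes a shorter path. It reduces $P_7$ modulo the single prime $p=2$, obtaining $x^7+x^6+1$, and then checks directly that this septic is irreducible over $\mathbb{F}_2$: it has no root in $\mathbb{F}_2$, and trial division by the three irreducible polynomials of degree $2$ or $3$ without roots (namely $x^2+x+1$, $x^3+x+1$, $x^3+x^2+1$) shows that none of them is a factor, so no proper factor of degree $\le 3$ exists and hence no proper factorization at all. This is precisely the ``single small prime already yields an irreducible reduction'' case you anticipated, so your two-prime subset-sum machinery, while perfectly valid, is never needed. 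Two small side remarks: the restriction $p\nmid D_P$ is convenient but not essential for the degree--subset-sum argument (reduction preserves degrees of monic factors regardless of separability), and in any event $p=2$ does satisfy it here since $\gcd(x^7+x^6+1,\,x^6)=1$ in $\mathbb{F}_2[x]$.
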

\begin{proof}
    We have to prove that $P_7(x)$ is irreducible over $\Q$.
    It has leading coefficient $1$ and coefficients in $\Z$,
    therefore it is irreducible over $\Q$ if and only if it is irreducible over $\Z$.
    Here, we consider irreducibility over $\mathbb{F}_2$,
    which is a sufficient condition for irreducibility over $\Z$.
    In $\mathbb{F}_2$, one has
    \begin{equation*}
        P_7(x) \equiv x^7 + x^6 + 1\,.
    \end{equation*}
    Assume that this polynomial is reducible.
    Because it has no roots in $\mathbb{F}_2$,
    it would have to contain a factor of degree less then $4$ which also has no root in $\mathbb{F}_2$.
    The possible candidates are therefore $x^2+x+1$, $x^3+x+1$ and $x^3+x^2+1$.
    Doing a polynomial division with these three polynomials,
    one finds that
    \begin{align*}
        x^7+ x^6 + 1 &\equiv (x^2+x+1)(x^5+x^3+x^2+1) &+ x \\
        &\equiv (x^3+x+1)(x^4+x^3+x^2) &+ x^2 + 1 \\
        &\equiv (x^3+x^2+1)(x^4+x+1) &+ x^2 + x
    \end{align*}
    and we have a contradiction.
    Therefore, $P_7(x)$ is irreducible over $\mathbb{F}_2$, and subsequently also over $\Q$.
\end{proof}
Even though Lemma \ref{LatRep} is not applicable for this polynomial because its roots lie in $(-2.25, 1.75)$,
they still lie close to each other,
which results in a good polynomial discriminant.
Regarding the lattice representation issue in the $d=7$ case,
one has to compute the Vandermonde matrix $\bm{V}$
explicitly (using an arbitrary precision data type to avoid stability issues)
and find a good lattice representation matrix $\bm{T}$
by means of a lattice reduction algorithm, see for instance \cite{Lenstra1982}.

This completes our list of polynomials used for the dimensions $2\leq d\leq 10$.
We attach Table \ref{PolTable} collecting all polynomials and useful information.

\section{Efficient enumeration of Frolov lattices in $d$-cubes} \label{sec:enumeration}

\SetKwFunction{assemble}{assemble}
\SetKwProg{Fn}{Function}{}{}
\begin{algorithm}[t] 
    \textbf{Input:}\\ 
    Integration domain $\Omega = [-1/2, 1/2]^d$,\\
    Lattice representation matrix $\bm{T}=\bm{QR}$\\
	\hrulefill\\
	\textbf{set} $\mathcal{N} = \emptyset$\\
	\textbf{set} $\bm{m} = (0,\ldots,0)^\top$\\
	\textbf{run} \assemble $(\mathcal{N}, d, \bm{m})$\\
	\hrulefill\\
    \Fn{\assemble $(\mathcal{N}, j, \bm{m})$}{
		\If{$j\geq 2$}{
			Determine the set $K_j =
			\{{k}_j \in\Z\,\colon\,g_j(0,\ldots,0,{k}_j,{m}_{j+1},\ldots,{m}_d)
			\leq \frac{d}{4} - \sum_{i=j+1}^d g_i(\bm{m}) \}$\\
			\ForAll{$k_j\in K_j$}{
				\textbf{set} $m_j = k_j$\\
				\assemble $(\mathcal{N}, j-1, \bm{m})$
			}
			\textbf{set} $m_j = 0$\\
		}
		\If{$j=1$}{
			Determine the set
			$K_1 = \{k_1\in\Z\,\colon\,g_j(k_1,m_{2},\ldots,m_d)
			\leq \frac{d}{4} - \sum_{i=2}^d g_i(\bm{m}) \}$\\
			\ForAll{$k_1\in K_1$}{
				\textbf{set} $m_1 = k_1$\\
				\If{$\bm{Tm}\in\Omega$}{
					\textbf{set} $\mathcal{N} = \mathcal{N}\cup \{\bm{Tm}\}$
				}
			}
			\textbf{set} $m_1 = 0$\\
		}
    }
    \hrulefill\\
    \textbf{Output}: Set of lattice points $\mathcal{N}$ 
	\caption{Assemblation of the set $\mathcal{N} = \Omega\cap\bm{T}(\Z^d)$.} \label{QRassemble}
\end{algorithm}

In this section we present an enumeration algorithm
to determine the set of integration points for the Frolov cubature formula.
The approach is similar to the one in \cite{KaOeUl17} for orthogonal lattices,
Here, we generalize the method for arbitrary lattices.

\subsection{Enumeration of non-orthogonal Frolov lattices}
We fix the integration domain $\Omega = [-1/2, 1/2]^d$ and a lattice $\bm{T}(\Z^d)$
with lattice representation matrix $\bm{T}$.
We are interested in the discrete set 
\begin{equation*}
\mathcal{N} = \Omega\cap\bm{T}(\Z^d) = \{\bm{Tk}\in\Omega \,\colon \, \bm{k}\in\Z^d\}\,.
\end{equation*}
Our strategy is to consider a slightly larger set $\mathcal{B} \supset \mathcal{N}$ 
which allows for explicit enumeration in a straightforward way. 
We choose 
\begin{equation*}
\mathcal{B} = B_{\sqrt{d}/2}(0)\cap\bm{T}(\Z^d)
= \left\{\bm{Tk} \,\colon\, \|\bm{Tk}\|_2^2\leq \frac{d}{4}, \bm{k}\in\Z^d\right\}\,.
\end{equation*}
Using the matrix decomposition 
\begin{equation*}
\bm{T} = \bm{QR}\,,
\end{equation*}
where $\bm{Q}$ is an orthogonal matrix and $\bm{R}$ is an upper triangular matrix,
we can rewrite this set as
\begin{equation*}
\mathcal{B} = \left\{\bm{Tk} \,\colon\, \|\bm{Rk}\|_2^2\leq \frac{d}{4}, \bm{k}\in\Z^d\right\}\,.
\end{equation*}
The function $\|\bm{R}\cdot\|_2^2$ can be split up into additive parts
\begin{eqnarray*}
\|\bm{Rk}\|_2^2 &=& \sum_{i=1}^d  g_i(\bm{k})\\
g_i(\bm{k}) &=& (\bm{Rk})_i^2,\quad i=1,\ldots,d\,,
\end{eqnarray*}
and from the upper triangular structure of $\bm{R}$ it follows that
$g_j(\bm{k})$ only depends on the components $k_j,\ldots,k_d$.
For an integer vector $\bm{k}$ we therefore have
\begin{equation}
\|\bm{Rk}\|_2^2 \leq  \frac{d}{4}
\Longleftrightarrow  g_j(\bm{k}) \leq \frac{d}{4} - \sum_{i=j+1}^d g_i(\bm{k})\,,\quad j=1,\ldots,d\,.
\end{equation}
Fixing the coordinates $k_{j+1},\ldots,k_d$ results in explicitly solvable inequalities for $k_j$,
since the right hand side is constant and the left hand side is a quadratic function in $k_j$.
Therefore, the set $\mathcal{N}$ can be assembled with Algorithm \ref{QRassemble}.

This algorithm iterates over all elements of $\mathcal{B}$, 
which determines the complexity that is of order 
\begin{equation*}
\vol_d\left(B_{\sqrt{d}/2}(0)\right)/|\det \bm{T}| \asymp 2^d \cdot |\mathcal{N}|\,.
\end{equation*}
This is certainly true if the sets $K_j$ appearing in the algorithm are all nonempty, 
and this should be the case for a lattice with a small determinant and a good choice of its representation matrix. 
The exponential dependence on $d$ is of minor importance here;
Once the Frolov integration points are computed and stored,
they can be reused for numerical integration.

\subsection{Numerical results}
In Table \ref{tab_times} the running times for the enumeration of the Frolov lattice points in $[0,1]^d$ with Algorithm \ref{QRassemble} are provided for dimensionalities $d \in \{2,3, \ldots, 9\}$. 
Firstly, we observe that the number of points $N$ converges to the scaling factor $n$, as $n$ becomes large, cf. \eqref{eqn_scalingVolume}.

Moreover, one can observe the \emph{linear runtime} of the algorithm in terms of the number of points $N$: If the number of points $N$ is quadrupled, then also the required time to assemble these $4N$ points is approximatively quadrupled.
However, comparing the runtimes for small $d$ and large $d$, it is apparent that a dimension-dependent constant is involved. 
This is analogous to the orthogonal setting  for $d = 2^k, k \in \N$, as it was treated in \cite{KaOeUl17}.

The resulting point sets for dimension $d \in \{2,3,\ldots, 10\}$ are available for download at \\
\texttt{http://wissrech.ins.uni-bonn.de/research/software/frolov/}.

\begin{table}[t]
 \begin{tabular}{|l|l|l|l|}
\hline
 Dim. $d$ & Scaling $n$ & Points $N$& Time  (s)\\
\hline
2 & 1024 & 1023 & 4.4e-05\\
2 & 4096 & 4093 & 0.000158\\
2 & 16384 & 16387 & 0.00053\\
2 & 65536 & 65533 & 0.002117\\
2 & 262144 & 262147 & 0.00823\\
2 & 1048576 & 1048575 & 0.096369\\
\hline
3 & 1024 & 1021 & 0.000105\\
3 & 4096 & 4093 & 0.000341\\
3 & 16384 & 16387 & 0.001213\\
3 & 65536 & 65537 & 0.004547\\
3 & 262144 & 262149 & 0.017474\\
3 & 1048576 & 1048581 & 0.114605\\
\hline
4 & 1024 & 1023 & 0.00024\\
4 & 4096 & 4103 & 0.000805\\
4 & 16384 & 16395 & 0.002844\\
4 & 65536 & 65551 & 0.010464\\
4 & 262144 & 262155 & 0.038923\\
4 & 1048576 & 1048579 & 0.248508\\
\hline
5 & 1024 & 1021 & 0.00061\\
5 & 4096 & 4093 & 0.002072\\
5 & 16384 & 16359 & 0.007013\\
5 & 65536 & 65533 & 0.025019\\
5 & 262144 & 262141 & 0.129366\\
5 & 1048576 & 1048591 & 0.473579\\

\hline
 \end{tabular}
 \quad \, \quad
\begin{tabular}{|l|l|l|l|}
\hline
 Dim. $d$ & Scaling $n$ & Points $N$& Time (s) \\
\hline
6 & 1024 & 1005 & 0.00146\\
6 & 4096 & 4087 & 0.004961\\
6 & 16384 & 16401 & 0.016533\\
6 & 65536 & 65513 & 0.059226\\
6 & 262144 & 262161 & 0.241978\\
6 & 1048576 & 1048585 & 0.943112\\
\hline
7 & 1024 & 1009 & 0.003208\\
7 & 4096 & 4099 & 0.011418\\
7 & 16384 & 16383 & 0.039014\\
7 & 65536 & 65531 & 0.13972\\
7 & 262144 & 262117 & 0.513067\\
7 & 1048576 & 1048573 & 2.0007\\
\hline
8 & 1024 & 1029 & 0.007961\\
8 & 4096 & 4051 & 0.025833\\
8 & 16384 & 16441 & 0.094269\\
8 & 65536 & 65539 & 0.329561\\
8 & 262144 & 262207 & 1.20636\\
8 & 1048576 & 1048767 & 4.59066\\
\hline
9 & 1024 & 997 & 0.017742\\
9 & 4096 & 4035 & 0.066017\\
9 & 16384 & 16517 & 0.223132\\
9 & 65536 & 65557 & 0.76848\\
9 & 262144 & 262107 & 2.77068\\
9 & 1048576 & 1048631 & 10.4136\\
\hline

\end{tabular}
\caption{Running times for the assemblation of Frolov cubature points in $[0,1]^d$.} \label{tab_times}

\end{table}

\section{Compactly supported functions with bounded mixed derivative in $L_2$} \label{sec:space}

\subsection{Characterization of the space}
We denote with $S(\R^d)$ the usual Schwartz space. Let $\mathbf{r} = (r_1,...,r_d) \in \N^d$ be a smoothness vector 
with integer components. Then we define the semi-norm
$$
  \|\varphi\|^2_{H^{\mathbf{r}}_{\text{mix}}} := \sum\limits_{e \subset [d]}
  \Big\|\Big(\prod\limits_{i\in e} \frac{\partial^{r_i}}{\partial x_i^{r_i}}\Big)\varphi\Big\|^2_2 
$$
where $\|\cdot\|_2$ denotes the $L_2(\R^d)$-norm. Clearly this norm is induced by an inner product. 
By Plancherel's theorem together with well-known properties of the Fourier transform, see \eqref{FT} below,
we may rewrite  
\begin{equation}\label{potential}
\begin{split}
  \|\varphi\|_{H^{\mathbf{r}}_{\text{mix}}} &= \Big\|\mathcal{F}^{-1}\Big[
  \Big(\prod\limits_{i=1}^d (1+|2\pi \xi_i|^{2r_i})\Big)^{1/2} \mathcal{F}\varphi(\boldsymbol{\xi})\Big]\Big\|_2\\
  &= \Big\|\Big(\prod\limits_{i=1}^d (1+|2\pi \xi_i|^{2r_i})\Big)^{1/2}\mathcal{F}\varphi(\boldsymbol{\xi})\Big\|_2
  = \|v_{\bsr}(\boldsymbol{\xi})\mathcal{F}\varphi\|_2\,,
\end{split}
\end{equation}
where we define
\begin{equation}
  v_{\mathbf{r}}(\bsx):= \Big(\prod\limits_{i=1}^d (1+|2\pi x_i|^{2r_i})\Big)^{1/2}\,.
\end{equation}
Let now $\Omega$ be a bounded domain in $\R^d$. We denote with $C^{\infty}_0(\Omega)$ the space of all infinitely many times
differentiable (real-valued) functions $\varphi:\R^d \to \R$ with $\supp \varphi \subset \Omega$.
Finally, we define the space
\begin{equation}\label{f1}
  \mathring{H}^{\mathbf{r}}_{\text{mix}}(\overline{\Omega}) := \overline{C_0^{\infty}(\Omega)}^{\|\cdot\|_{H^{\mathbf{r}}_{\text{mix}}}}
\end{equation}
by completion with respect to the norm $\|\cdot\|_{H^{\mathbf{r}}_{\text{mix}}}$\,. As a consequence we get that 
$\mathring{H}^{\mathbf{r}}_{\text{mix}}(\overline{\Omega})$ is a Hilbert space which  consists
of $r_i-1$ times continuously differentiable functions (mixed in each component) on $\R^d$ 
which vanish on $\R^d\setminus \Omega$\,.

We will now consider a more specific situation. Let $\Omega = (0,1)^d$. Then it holds 
\begin{equation}\label{tensor}
    \mathring{H}^{\mathbf{r}}_{\text{mix}}:=\mathring{H}^{\mathbf{r}}_{\text{mix}}([0,1]^d) = \mathring{H}^{r_1}([0,1]) \otimes \cdots \otimes \mathring{H}^{r_d}([0,1])
\end{equation}
in the sense of tensor products of Hilbert spaces, where $\mathring{H}^{r} = \mathring{H}^{r_i}([0,1])$ is the univariate version of the above
defined spaces. Functions in this class satisfy a left and a right boundary condition, namely 
$f^{(j)}(0) = f^{(j)}(1) = 0$ for $j=0,...,r-1$. 

The first assertion in the following lemma is a direct consequence of Taylor's theorem and the homogeneous 
boundary condition of the function and all its derivatives. The second one follows from (i) together 
with H\"older's inequality. 
\begin{lemma}\label{lem51} Let $\bsr \in \N^d$. 
{\em (i)} Every function $\varphi \in C_0^{\infty}((0,1)^d)$ admits the following
representation
$$
    \varphi(x_1,...,x_d) = \int_0^1 \cdots \int_0^1 \varphi^{(\bsr)}(t_1,...,t_d)\prod\limits_{i=1}^d
   \frac{(x_i-t_i)^{r_i-1}_+}{(r_i-1)!}\,dt_1...dt_d\,.
$$
{\em (ii)} Let $e \subset [d]$. Then 
$$
  \Big\|\Big(\prod\limits_{i\in e} \frac{\partial^{r_i}}{\partial x_i^{r_i}}\Big)\varphi\Big\|^2_2 
  \leq \|\varphi^{(\bsr)}\|^2_2\prod\limits_{i\in e} \frac{1}{[(r_i-1)!]^2(2r_i-1)2r_i} 
$$
and therefore
$$
  \|\varphi\|^2_{H^{\bsr}_{\text{mix}}} \leq \|\varphi^{(\bsr)}\|^2_2\sum\limits_{e \subset [d]} 
  \prod\limits_{i\in e} \frac{1}{[(r_i-1)!]^2(2r_i-1)2r_i} \,.
$$
\end{lemma}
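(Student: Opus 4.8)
The plan is to establish (i) by iterating the univariate Taylor theorem with integral remainder, and then to deduce (ii) by differentiating the representation from (i) and applying the Cauchy--Schwarz inequality. For (i), I would begin from the one-dimensional fact: if $g\in C^\infty([0,1])$ satisfies $g^{(j)}(0)=0$ for $j=0,\dots,r-1$, then Taylor's formula with integral remainder gives $g(x)=\int_0^x \frac{(x-t)^{r-1}}{(r-1)!}g^{(r)}(t)\,dt=\int_0^1 \frac{(x-t)_+^{r-1}}{(r-1)!}g^{(r)}(t)\,dt$, the polynomial boundary terms vanishing precisely because of the homogeneous conditions at $0$. For $\varphi\in C_0^\infty((0,1)^d)$ these conditions come for free: with the other variables fixed, $x_1\mapsto\varphi(\bsx)$ is smooth and identically zero near $x_1=0$, so all its $x_1$-derivatives vanish there. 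Applying the univariate identity successively in $x_1,\dots,x_d$ --- each step legitimate since differentiation preserves compact support and mixed partials commute --- and collecting the $d$ iterated integrals by Fubini yields exactly the stated product representation.

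For (ii), set $F:=\prod_{i\in e}\frac{\partial^{r_i}}{\partial x_i^{r_i}}\varphi$ and differentiate the representation of (i). The key point is that $\frac{(x_i-t_i)_+^{r_i-1}}{(r_i-1)!}$ is the $r_i$-fold Riemann--Liouville integration kernel, so applying $\partial_{x_i}^{r_i}$ inverts that integration and collapses the $t_i$-integral to evaluation at $t_i=x_i$. Hence for each $i\in e$ the variable $t_i$ is frozen to $x_i$, leaving
$$F(\bsx) = \int_{[0,1]^{[d]\setminus e}}\varphi^{(\bsr)}\prod_{i\notin e}\frac{(x_i-t_i)_+^{r_i-1}}{(r_i-1)!}\,d\bm{t},$$
where in the argument of $\varphi^{(\bsr)}$ one keeps $x_i$ for $i\in e$ and integrates over $t_i$ for $i\notin e$. (Equivalently, since $F$ inherits the boundary conditions, one may apply the Taylor argument of (i) directly to $F$ but only in the directions $i\notin e$, using $\prod_{i\notin e}\partial_i^{r_i}F=\varphi^{(\bsr)}$.)

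Now apply Cauchy--Schwarz in the variables $(t_i)_{i\notin e}$: $|F(\bsx)|^2$ is bounded by the product of $\int_{[0,1]^{[d]\setminus e}}|\varphi^{(\bsr)}|^2$ and the factorising integral $\prod_{i\notin e}\int_0^1\frac{(x_i-t_i)_+^{2(r_i-1)}}{[(r_i-1)!]^2}\,dt_i=\prod_{i\notin e}\frac{x_i^{2r_i-1}}{[(r_i-1)!]^2(2r_i-1)}$. Integrating over $\bsx\in[0,1]^d$, the factors for $i\notin e$ contribute $\int_0^1 x_i^{2r_i-1}\,dx_i=\frac{1}{2r_i}$, while the $x_i$ for $i\in e$ together with the $t_i$ for $i\notin e$ range over all $d$ coordinates and so reassemble $\|\varphi^{(\bsr)}\|_2^2$ (using that the support lies in $(0,1)^d$). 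This gives the per-term estimate with constant $\prod_{i\notin e}\frac{1}{[(r_i-1)!]^2(2r_i-1)2r_i}$, and summing over all $e\subset[d]$ --- via the bijection $e\mapsto[d]\setminus e$ --- produces the asserted bound for $\|\varphi\|^2_{H^{\bsr}_{\text{mix}}}$.

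Two points deserve care. First, a bookkeeping subtlety: the natural per-term constant is a product over the \emph{complementary} set $[d]\setminus e$, i.e.\ over the directions in which one actually integrates (smooths); the final summed bound is unaffected because complementation permutes the subsets of $[d]$. Second, the main technical step is justifying the kernel collapse --- that differentiating the truncated-power kernel $r_i$ times returns point evaluation (equivalently, $r_i-1$ derivatives yield the indicator $\mathbf{1}_{\{t_i<x_i\}}$ and the last derivative is the fundamental theorem of calculus) --- together with the admissibility of differentiating under the integral sign, which is routine here since $\varphi^{(\bsr)}$ is continuous with compact support.
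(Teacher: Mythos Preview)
Your proposal is correct and follows essentially the same route as the paper: part~(i) via iterated Taylor with integral remainder using the vanishing boundary values at $0$, and part~(ii) via the representation plus Cauchy--Schwarz (the paper says H\"older, which amounts to the same thing in $L_2$). Your observation about the bookkeeping is well taken: the natural per-term constant is the product over $[d]\setminus e$, not over $e$ as printed in the statement (indeed, the printed version fails already for $e=[d]$), but the summed bound for $\|\varphi\|_{H^{\bsr}_{\mathrm{mix}}}^2$ is unaffected by complementation and is what is actually used later.
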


\begin{remark} {\em (a)} Note, that the assertions in Lemma \ref{lem51} hold true for 
any function $\varphi \in S(\R^d)$ with $\supp \varphi \subset \R_+^d$\,. We only need zero 
boundary values at $0$.\\

{\em (b)} The previous lemma shows that the semi-norm $\|\cdot\|_{\mr{H}^{\bsr}_{\text{mix}}}$
induced by the bilinear form 
\begin{equation}\label{f2}
   \langle \varphi, \psi \rangle_{\mr{H}^{\bsr}_{\text{mix}}}:= \int_{[0,1]^d} 
   \varphi^{(\bsr)}(\bsx)\psi^{(\bsr)}(\bsx)\,d\bsx
\end{equation}
is actually a norm on $C^{\infty}_0((0,1)^d)$ since the bilinear form is positive definite as a consequence of (ii). 
Hence, we could have also used this semi-norm for 
the completion in \eqref{f1}. As it turns out Lemma \ref{lem51} and \eqref{f2} are actually
the key to derive the reproducing kernel for the space $\mathring{H}^{\bsr}_{\text{mix}}$.\\

{\em (c)} We have an explicit upper 
bound for the norm equivalence constant in (ii). Suppose that we have a constant 
smoothness vector $\bsr = (r,...,r)$ with $r \in \N$\,. Then it holds 
\begin{equation}\label{f12}
\begin{split}
\sum\limits_{e \subset [d]} 
  \prod\limits_{i\in e} \frac{1}{[(r_i-1)!]^2(2r_i-1)2r_i} &= \sum\limits_{i=0}^d 
  \binom{d}{i} \Big(\frac{1}{[(r-1)!]^2(2r-1)2r}\Big)^i\\
  &= \Big(1+\frac{1}{[(r-1)!]^2(2r-1)2r}\Big)^d\,.
\end{split}  
\end{equation}
Hence, if $r=1$ the constant is bounded by $(3/2)^d$, in case $r=2$ we have $(13/12)^d$ and in case $r=3$
already $(61/60)^d$\,.

\end{remark}

\subsection{The reproducing kernel of $\mathring{H}^{\mathbf{r}}_\mix$} \label{sec:repro_kernel}
In the sequel we will identify the space $\mathring{H}^{\mathbf{r}}_{\text{mix}}$ as 
a reproducing kernel Hilbert space. We are 
looking for a kernel function $\mr{K}^{\bsr}_d(\bsx,\bsy)$ such that for every 
$f\in \mathring{H}^{\mathbf{r}}_{\text{mix}}$
$$
    \langle f(\cdot), \mr{K}^{\bsr}_d(\bsx,\cdot)\rangle_{\mr{H}^{\bsr}_{\text{mix}}}  = f(\bsx)\quad,\quad \bsx \in [0,1]^d\,.
$$
To this end, we may derive the reproducing kernels of the univariate spaces $\mathring{H}^{r_i}$. The reproducing kernel of the tensor product space \eqref{tensor} is then given by the point-wise product of the univariate kernels
\begin{equation}
	\mathring{K}^{\bsr}_{d}(\bsx, \bsy) = \prod_{\ell=1}^d \mathring{K}^r_{1}(x_\ell, y_\ell) .
\end{equation}
Therefore, the problem of computing $\mathring{K}_{d}^\bsr(\bsx, \bsy)$ is reduced to the construction of $\mathring{K}^r_{1}: [0,1] \times [0,1] \to \R$. 

Let us first recall a general fact for Hilbert spaces and orthogonal sums.
To this end, let $U:=\mathrm{span}\{u_0,\ldots,u_{r-1}\} \subset \cH$ be an $r$-dimensional subspace of a 
Hilbert space $\cH$. Using Gram-Schmidt orthogonalization, the orthogonal projection $P_U: \cH \to U$ is given by
\begin{equation} \label{eqn_orth_projection}
	P_U(f)(x) = \sum_{j=0}^{r-1} \left( \sum_{k=0}^{r-1} G^{-1}_{j,k} \cdot \langle f,u_k\rangle_{\cH} \right) u_j ,
\end{equation}
where the  Gramian matrix $\mathbf{G} = \left( \langle u_j, u_k \rangle_{\cH} \right)_{j,k=0}^{r-1} \in \R^{r \times r}$. 
Moreover, the projection onto the orthogonal complement $U^\perp = \cH \ominus U$ is $P_{U^\perp}f = (\mathrm{Id} - P_U)f$.

The next Lemma provides the necessary utilities to compute the reproducing kernel of closed subspaces 
that are defined via homogeneous boundary conditions. 
		
\begin{lemma}\label{modify}
Let $\cH_K$ be a RKHS with kernel $K: [0,1] \times [0,1] \to \R$.  
Assuming that $K(x,\cdot)$ is $r$ times weakly differentiable, 
let $u_{j} := K^{(0,j)}(\cdot, 1) := 
\frac{\partial^j}{\partial y^j} K(\cdot, y)_{|y=1}$ for $j=0,\ldots,r-1$ and $U = 
\mathrm{span}\{u_0,\ldots, u_{r-1}\}$. Then it holds that

	\begin{enumerate}
		\item[(i)] For $j=0,\ldots,r-1$, the Riesz representer of the functional $f \mapsto f^{(j)}(1)$ in 
		$\cH_K$ is given by $u_{j}$, i.e.
		\[
			\langle f, u_{j} \rangle_{\cH_K} = f^{(j)}(1) \quad \text{ for all } f \in \cH_K .
		\]
		\item[(ii)] The reproducing kernel $K_{U^\perp}$ of $U^\perp \subset \cH_K$, i.e. the orthogonal complement of $U$ in $\cH_K$, 
		is given by
		\begin{equation}\label{f6}
			K_{U^\perp}(x,y)  = P_{U^\perp} K(\cdot, y)(x) = K(x,y) - \sum_{j=0}^{r-1} 
			\sum_{k=0}^{r-1} G^{-1}_{j,k} u_j(x) u_k(y) .
		\end{equation}
		\item[(iii)] It holds that
		$$
			U^\perp = \{f \in \cH_K: f^{(j)}(1)=0, j=0,\ldots,r-1 \} .
		$$
	\end{enumerate}
	\begin{proof}
		(i) is \cite[Lem. 10]{berlinet} for the linear functional $f \mapsto f^{(j)}(1)$ and 
		(ii) follows by applying \cite[Thm. 11]{berlinet} to \eqref{eqn_orth_projection}. 
		Finally, regarding (iii) we note that it holds for all $f \in U^\perp$ that 
         	$$\langle f, u_j \rangle_{\cH_K} = \langle f, K^{(0,j)}(\cdot, 1) \rangle_{\cH_K} = f^{(j)}(1) = 0 .$$
	\end{proof}
\end{lemma}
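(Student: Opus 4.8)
The plan is to make part (i) the load-bearing step: once the representer identity is in hand, (iii) is immediate and (ii) reduces to a finite-dimensional projection. First I would prove (i) by differentiating the reproducing identity $\langle f, K(\cdot,y)\rangle_{\cH_K}=f(y)$ in the variable $y$. Formally, differentiating $j$ times and setting $y=1$ gives $\langle f,\,\frac{\partial^j}{\partial y^j}K(\cdot,y)\big|_{y=1}\rangle_{\cH_K}=f^{(j)}(1)$, that is $\langle f,u_j\rangle_{\cH_K}=f^{(j)}(1)$. To justify the interchange I would work through difference quotients: writing $\Delta_h(x):=h^{-1}\bigl(K(x,y+h)-K(x,y)\bigr)$, the reproducing property gives $\langle f,\Delta_h\rangle_{\cH_K}=h^{-1}\bigl(f(y+h)-f(y)\bigr)$ for every $f\in\cH_K$, while the expansion $\langle\Delta_h,\Delta_{h'}\rangle_{\cH_K}=(hh')^{-1}\bigl[K(y+h,y+h')-K(y+h,y)-K(y,y+h')+K(y,y)\bigr]$ shows, via the existence of the mixed partial of $K$, that $\{\Delta_h\}$ is Cauchy and hence converges in $\cH_K$; its limit is identified pointwise as $K^{(0,1)}(\cdot,y)$ by the reproducing property. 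Continuity of the inner product then lets me pass to the limit, and iterating $j$ times and evaluating at $y=1$ yields (i). Alternatively one simply invokes \cite[Lem.\ 10]{berlinet}.

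Part (iii) is then immediate: since $U=\mathrm{span}\{u_0,\ldots,u_{r-1}\}$, a function $f\in\cH_K$ lies in $U^\perp$ if and only if $\langle f,u_j\rangle_{\cH_K}=0$ for all $j=0,\ldots,r-1$, which by (i) is precisely the boundary condition $f^{(j)}(1)=0$ for $j=0,\ldots,r-1$.

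For (ii) I would first record the general principle that a closed subspace $V\subseteq\cH_K$ is again a RKHS whose reproducing kernel is the projected kernel $K_V(x,y)=\bigl(P_V K(\cdot,y)\bigr)(x)$: indeed $K_V(\cdot,y)\in V$, and for $f\in V$ the self-adjointness of $P_V$ gives $\langle f,P_VK(\cdot,y)\rangle_{\cH_K}=\langle P_Vf,K(\cdot,y)\rangle_{\cH_K}=\langle f,K(\cdot,y)\rangle_{\cH_K}=f(y)$. Applying this with $V=U^\perp$ and $P_{U^\perp}=\mathrm{Id}-P_U$, together with the projection formula \eqref{eqn_orth_projection}, I am left to compute the coefficients $\langle K(\cdot,y),u_k\rangle_{\cH_K}$. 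Taking $f=K(\cdot,y)$ in (i) and using the symmetry $K(x,y)=K(y,x)$ gives $\langle K(\cdot,y),u_k\rangle_{\cH_K}=\tfrac{\partial^k}{\partial s^k}K(s,y)\big|_{s=1}=u_k(y)$; substituting this into $K_{U^\perp}(x,y)=K(x,y)-\bigl(P_UK(\cdot,y)\bigr)(x)$ produces exactly \eqref{f6}. The one algebraic point I would not skip is that the Gramian $\mathbf{G}$ is invertible, which holds because the functionals $f\mapsto f^{(j)}(1)$ are linearly independent on $\cH_K$ (they are separated by suitable test functions), hence so are their representers $u_0,\ldots,u_{r-1}$.

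The hard part will be the rigorous interchange of differentiation and inner product in (i): this is the only step that genuinely uses the smoothness of $K$, and it really requires control of the mixed cross-derivatives $\tfrac{\partial^{2k}}{\partial s^k\partial t^k}K(s,t)$ rather than merely the one-variable weak differentiability of $K(x,\cdot)$ stated in the hypothesis. Everything else is the reproducing property together with linear algebra in the finite-dimensional subspace $U$.
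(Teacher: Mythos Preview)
Your proposal is correct and follows essentially the same route as the paper's proof: the paper simply cites \cite[Lem.~10]{berlinet} for (i) and \cite[Thm.~11]{berlinet} together with \eqref{eqn_orth_projection} for (ii), and derives (iii) from (i) exactly as you do. Your argument is a faithful unpacking of those citations---the difference-quotient justification for (i) and the projected-kernel computation for (ii) are precisely the content of those referenced results---so there is no genuine methodological difference. Your closing remark that the interchange in (i) really needs control of the mixed partials $\partial^{2k}K/\partial s^k\partial t^k$ rather than the one-sided weak differentiability stated in the hypothesis is a fair observation about the lemma's formulation, but it does not affect the intended application, where the explicit kernel \eqref{ker} is smooth off the diagonal.
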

We want to apply this machinery to $\mr{H}^r$ with $r\in \N$. The observation in Lemma \ref{lem51} together 
with \eqref{f2} gives rise to use the 
approach of Wahba \cite[1.2]{Wah90} as a starting point. Let us define the 
kernel function
\begin{equation}\label{ker}
  K^r_1(x,y) := \int_0^1 \frac{(x-t)^{r-1}_+}{(r-1)!}\cdot \frac{(y-t)^{r-1}_+}{(r-1)!}\,dt\quad,\quad x,y\in [0,1]\,.
\end{equation}
Then it is immediately clear from Lemma \ref{lem51},(i) (and a straight-forward density argument) that 
$$
    f(x) = \langle f(\cdot), K_1^r(x,\cdot) \rangle_{\mr{H}^{r}}\quad, \quad x\in [0,1]\,.
$$
Indeed, recall that the inner product $\langle\cdot,\cdot \rangle_{\mr{H}^r}$ stems from \eqref{f2} and that 
$$
   (K^r_1)^{(0,r)}(x,y) = \frac{(x-y)^{r-1}_+}{(r-1)!}\,.
$$
It is possible to give an explicit formula for \eqref{ker} by using that 
\begin{equation}\label{f4}
  K^r_1(x,y) := \int_0^{\min\{x,y\}} \frac{(x-t)^{r-1}_+}{(r-1)!}\cdot \frac{(y-t)^{r-1}_+}{(r-1)!}\,dt\,.
\end{equation}
Interpreting this as a Taylor remainder term we find 
\begin{equation}\label{f5}
K^r_1(x,y) = \frac{(-1)^r}{(2r-1)!}\Big[\sum\limits_{k=r}^{2r-1}\binom{2r-1}{k}(-\min\{x,y\})^k\max\{x,y\}^{2r-1-k}\Big]\,.
\end{equation}
However, $\mr{H}^r$ is a only a closed subspace of $\mathcal{H}_{K_1^r}$ since the functions 
$f \in \mathcal{H}_{K_1^r}$ may lack the right boundary condition which is $f^{(j)}(1) = 0$ 
if $j=0,...,r-1$, whereas the left boundary condition $f^{(j)}(0) = 0$ if $j=0,...,r-1$ is for free due to the construction.
Let us now apply the construction from Lemma \ref{modify} to $K^r_1$ to construct a 
reproducing kernel $\mr{K}^r_1$ for the 
closed subspace $\mr{H}^r$\,.

\begin{figure}[t]
 \includegraphics[width=0.49\linewidth]{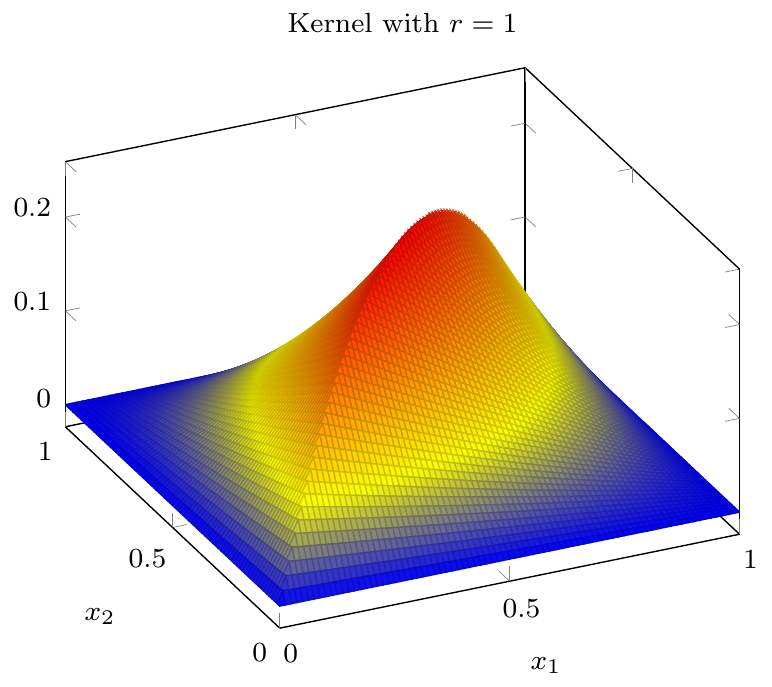}
 \,
 \includegraphics[width=0.49\linewidth]{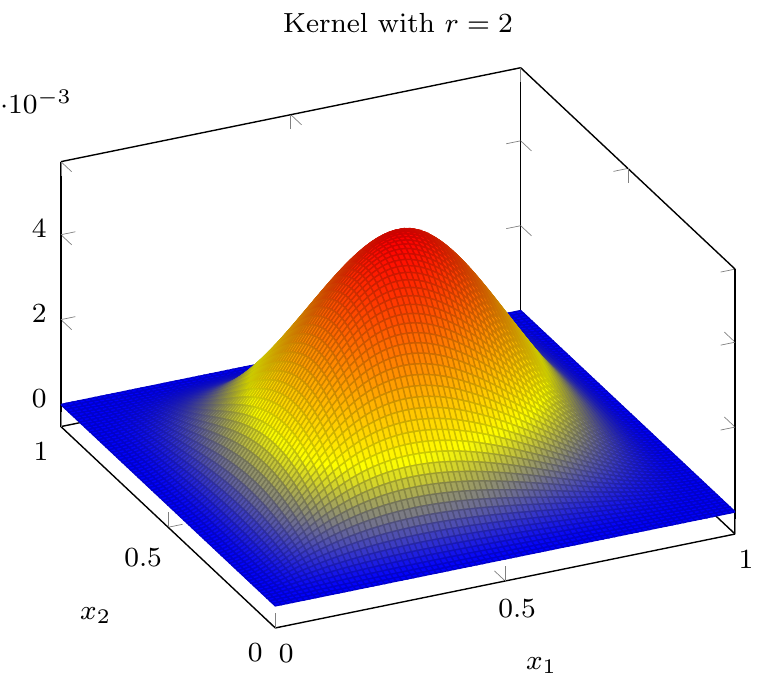}
 \caption{Plots of the kernel $\mathring{K}^r_1: [0,1] \times [0,1] \to \R$ with smoothness $r=1$ (left) and smoothness $r=2$ (right).} \label{fig_kernel_plot}
\end{figure}
First we compute the functions $u_j(\cdot) = (K_1^r)^{(0,j)}(\cdot,1)$ for $j = 0,...,r-1$
explicitly. Using again the formula \eqref{ker} we find
\begin{equation}\label{f7_1}
  \begin{split}
    u_j(x) &= \Big(\frac{d}{dy}\Big)^j\int_0^{\min\{x,y\}} \frac{(x-t)^{r-1}_+}{(r-1)!}\cdot \frac{(y-t)^{r-1}_+}{(r-1)!}\,dt{\bigg \vert}_{y=1}\\
    &=\int_0^x \frac{(x-t)^{r-1}_+}{(r-1)!}\cdot \frac{(1-t)^{r-1-j}}{(r-1-j)!}\,dt\,,
  \end{split}
\end{equation}
where we used the well-known formula for the differentiation of integrals. 
Similar as above in \eqref{f4} we interpret this as a Taylor's remainder term for a specific polynomial. 
It is not hard to verify that this polynomial 
is given by
\begin{equation}\label{f7_b}
  u_j(x) = \frac{(-1)^r}{(2r-1-j)!} \Big[\sum\limits_{k=r}^{2r-1-j} 
  \binom{2r-1-j}{k}(-x)^k\Big]\,\quad,\quad j=0,...,r-1\,.
\end{equation}
Looking at the functions $u_j$, $j=0,...,r-1$, we see immediately that 
$\{x^r,...,x^{2r-1}\}$ is a basis of their span. Hence we may use the system 
$\tilde{u}_j(x) = x^{j+r}/(j+r)!$ in \eqref{f6}\,. This gives the following representation for the 
kernel $\mathring{K}^r_{1}(x,y)$, namely
\begin{equation}\label{f7_2}
    	\mathring{K}^r_{1}(x,y) = K_1^r(x,y)  - \sum_{j=0}^{r-1} \sum_{k=0}^{r-1} \frac{G^{-1}_{j,k}}{(j+r)!(k+r)!}
    	x^{j+r} y^{k+r}\,,
\end{equation}
where $K_1^r(x,y)$ is given by \eqref{f5} and 
$$\mathbf{G} = \Big(\frac{1}{j! k!(j+k+1)}\Big)_{\substack{j=0,...,r-1\\k=0,...,r-1}}\,.$$

Let us give two examples. Putting $r=1$ in \eqref{f7_2} we have
$$
    \mr{K}^1_1(x,y) = \min\{x,y\} - xy,\quad,\quad x,y \in [0,1]\,.
$$
Furthermore, in case $r=2$ we obtain 
$$
  \mr{K}^2_1(x,y) = K^2_1(x,y) - x^2y^2+x^2y^3/2+x^3y^2/2-x^3y^3/3\,,
$$
where 
$$
    K^2_1(x,y) = \frac{1}{2}\min\{x,y\}^2\max\{x,y\}-\frac{1}{6}\min\{x,y\}^3,
$$
For $r=1,2,3$ we obtain the associated Gramian matrices
$$
    (\mathbf{G}^1)^{-1} = \left(\begin{matrix}
                             1
                          \end{matrix}\right)\quad,\quad
    (\mathbf{G}^2)^{-1} = \left(\begin{matrix}
                             4 & -6\\
                             -6 & 12
                          \end{matrix}\right)\quad,\quad
    (\mathbf{G}^3)^{-1} = \left(\begin{matrix}
                             9& -36 & 60\\
                             -36 & 192 & -360\\
                             60 & -360 & 720
                          \end{matrix}\right)\,.
$$

In the case $d=1$ the kernels for $r=1$ and $r=2$ are depicted in Figure \ref{fig_kernel_plot}. The smoothness can be observed along the 
diagonal $x = y$, where the kernel for $r=1$ exhibits a kink.

Regarding the multivariate kernel, we have arrived at the following result.

\begin{theorem}\label{repr_kernel}
 Given a smoothness vector $\mathbf{r} = (r_1, r_2,\ldots,r_d) \in \N^d$, the reproducing kernel of the tensor 
 product space $\mathring{H}^{\mathbf{r}}_\mix = \mathring{H}^{r_1} \otimes \cdots \otimes \mathring{H}^{r_d}$ is given by
 \begin{align}
  \mathring{K}_{d}^{\mathbf{r}}(\bsx, \bsy) & = \prod_{\ell=1}^d \mathring{K}_{1}^{r_\ell}(x_\ell, y_\ell) \\
      & = \prod_{\ell=1}^d  \Big(K_{1}^{r_\ell}(x_\ell, y_\ell)  - 
      \sum_{j=0}^{r_\ell-1} \sum_{k=0}^{r_\ell-1} (\mathbf{G}^{r_\ell})^{-1}_{j,k} (K_{1}^{r_\ell})^{(0,j)}(x_\ell,1) 
      \, (K_{1}^{r_\ell})^{(0,k)}(y_\ell,1)  \Big),
 \end{align}
 where $u_j (x_\ell) = (K_{1}^{r_\ell})^{(0,j)}(x_\ell,1)$ are given in \eqref{f7_b} 
 and $(\mathbf{G}^{r_\ell})^{-1}$ are given in \eqref{f2}.
\end{theorem}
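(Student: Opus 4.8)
The plan is to reduce the $d$-dimensional statement to the univariate case already treated above and then to invoke the classical product rule for reproducing kernels of tensor product Hilbert spaces. Thus it suffices to establish two facts. First, that $\mr{K}^r_1$ from \eqref{f7_2} is the reproducing kernel of the univariate space $\mr{H}^r$ equipped with the inner product \eqref{f2}. Second, that whenever $\mathcal{H}_1,\ldots,\mathcal{H}_d$ are reproducing kernel Hilbert spaces with kernels $K_1,\ldots,K_d$, their Hilbert tensor product carries the reproducing kernel $\prod_{\ell=1}^d K_\ell(x_\ell,y_\ell)$; this is classical, see Aronszajn \cite{aronszajn}. Granting both, the two displayed product formulas follow at once from the tensor decomposition \eqref{tensor}, with the factors read off from \eqref{f7_b} and the Gramian $\mathbf{G}$ displayed after \eqref{f7_2}.

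For the univariate step I would argue as follows. Combining the identity $(K^r_1)^{(0,r)}(x,y)=(x-y)^{r-1}_+/(r-1)!$ with Lemma \ref{lem51}(i) shows that $K^r_1$ reproduces point evaluation with respect to \eqref{f2} on the ambient space $\mathcal{H}_{K^r_1}$, whose elements inherit the left boundary conditions $f^{(j)}(0)=0$ for $j=0,\ldots,r-1$ directly from the integral representation \eqref{ker}. The space $\mr{H}^r$ is exactly the closed subspace of those $f\in\mathcal{H}_{K^r_1}$ additionally satisfying the right boundary conditions $f^{(j)}(1)=0$ for $j=0,\ldots,r-1$. Putting $u_j=(K^r_1)^{(0,j)}(\cdot,1)$ and $U=\mathrm{span}\{u_0,\ldots,u_{r-1}\}$, part (iii) of Lemma \ref{modify} identifies $\mr{H}^r$ with $U^\perp$, and part (ii) supplies the correction \eqref{f6}, which after substituting the convenient basis $\tilde u_j(x)=x^{j+r}/(j+r)!$ for $U$ reduces precisely to \eqref{f7_2}.

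I expect the main obstacle to be the clean identification of $\mr{H}^r$, defined in \eqref{f1} as the completion of $C^\infty_0((0,1))$ in the $H^r_{\text{mix}}$-norm, with the subspace $U^\perp\subset\mathcal{H}_{K^r_1}$. Here I would show that every $\varphi\in C^\infty_0((0,1))$ meets both boundary conditions, hence lies in $U^\perp$, and that by Lemma \ref{lem51}(ii) the norm induced by \eqref{f2} is equivalent to the native RKHS norm on this subspace; consequently the completion stays inside $U^\perp$ and the two spaces agree as reproducing kernel Hilbert spaces. With this in hand, the univariate reproducing property transfers to products $f=\prod_\ell f_\ell$ by factorizing the inner product, and extends by bilinearity and density to all of $\mr{H}^{\bsr}_{\mix}$, so that $\prod_{\ell=1}^d \mr{K}^{r_\ell}_1(x_\ell,y_\ell)$ reproduces the full space. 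The remaining steps are the routine algebraic simplifications already performed in the text, completing the proof.
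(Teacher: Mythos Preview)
Your proposal is correct and follows essentially the same route as the paper: the argument in Section~\ref{sec:repro_kernel} leading up to Theorem~\ref{repr_kernel} is precisely the combination of the Wahba kernel $K^r_1$ reproducing in the ambient space via Lemma~\ref{lem51}(i), the correction from Lemma~\ref{modify} to pass to the subspace $U^\perp=\mr{H}^r$, and the tensor product rule for reproducing kernels. Your treatment is in fact slightly more careful than the paper's, since you make explicit the identification of $\mr{H}^r$ (defined as a completion in \eqref{f1}) with the orthogonal complement $U^\perp\subset\mathcal{H}_{K^r_1}$, a point the paper passes over without comment.
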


The explicit expression for the reproducing kernel of $\mathring{H}^{\mathbf{r}}_\mix$ allows to 
compute the norms of arbitrary bounded linear functionals $L \in (\mathring{H}^{\mathbf{r}}_\mix)^\star$, since it holds
\begin{equation} \label{eqn_functional_norm}
 \|L\|_{(\mathring{H}^{\mathbf{r}}_\mix)^\star} =  \sup_{\|f\|_{ \mathring{H}^{\mathbf{r}}_\mix } 
 \leq 1} |L(f)|  = \sqrt{ L^{(\bsx)} L^{(\bsy)} \mathring{K}_{d}^{\mathbf{r}}(\bsx, \bsy) }.
\end{equation}
The right-hand side involves the application of the functional 
$L$ to both components of the kernel. We will use this in Section 
\ref{sec:numerics} for the simulation of worst-case integration errors which can be 
rewritten as norms of certain functionals \eqref{eqn_wce_formula} involving the integration functional 
$L(f) = I_d(f) = \int_{[0,1]^d} f(\bsx) \, \rd \bsx$. In the sequel we will compute 
the norm and its Riesz representer. We have  
\begin{equation} \label{eqn_initial_error}
   \|I_d\|^2_{(\mathring{H}^{\mathbf{r}}_\mix)^\star} =  \sup_{\|f\|_{ \mathring{H}^{\mathbf{r}}_\mix } \leq 1} |I_d(f)|^2   = 
   \prod_{\ell=1}^d \left( \int_0^1 \int_0^1 \mathring{K}_{1}^{r_\ell}(x_\ell, y_\ell) \, \rd x_\ell \rd y_\ell \right)
\end{equation}
where
 \begin{align*}
  \int_0^1 \int_0^1 \mathring{K}_{1}^{r}(x, y) \, \rd x \rd y  = & \int_0^1 \int_0^1 K_1^r(x,y)  -  \sum_{j=0}^{r-1} \sum_{k=0}^{r-1} \frac{G^{-1}_{j,k}}{(j+r)!(k+r)!} 
    	x^{j+r} y^{k+r} \, \rd x \rd y \\
    	= & \int_0^1 \int_0^1 K_1^r(x,y)  \, \rd x \rd y - \sum_{j=0}^{r-1} \sum_{k=0}^{r-1} \frac{G^{-1}_{j,k}}{(j+r + 1)!(k+r + 1)!} \\
    	= & \frac{1}{ (r!)^2 (2r+1)} - \sum_{j=0}^{r-1} \sum_{k=0}^{r-1} \frac{G^{-1}_{j,k}}{(j+r + 1)!(k+r + 1)!}\,.
 \end{align*}
 The last identity follows from the representation \eqref{f4} and
 \begin{align*}
  \int_0^1 \int_0^1 K_1^r(x,y)  \, \rd x \rd y & = \int_0^1 \left( \int_0^1 \int_0^1  \frac{(x-t)^{r-1}_+}{(r-1)!}  \frac{(y-t)^{r-1}_+}{(r-1)!} \,  \rd x \rd y \right) \rd t \\
		  & = \int_0^1 \left(  \int_t^1  \frac{(x-t)^{r-1}}{(r-1)!}  \int_t^1 \frac{(y-t)^{r-1}}{(r-1)!} \,  \rd x \rd y \right) \rd t \\
		  & = \int_0^1 \left( \frac{(1-t)^r}{r!}  \frac{(1-t)^r}{r!}  \right) \rd t = \int_0^1 \frac{(1-t)^{2r}}{(r!)^2}  \ \rd t \\
		  & = \frac{1}{(2r)! (2r+1) } .
 \end{align*}
 For the Riesz representer of $f \mapsto \int_{[0,1]^d} f(\bsx) \rd\bsx = \langle f, R_{I_d} \rangle_{\mr{H}^{\bsr}_{\text{mix}}}$ it holds 
 $$
   R_{I_d}(\bsy) = \int_{[0,1]^d} \mathring{K}_{d}^{\mathbf{r}}(\bsx, \bsy)\,\rd \bsx 
   = \prod_{\ell=1}^d \left( \int_0^1 \mathring{K}_{1}^{r_\ell}(x_\ell, y_\ell) \, \rd x_\ell  \right)\quad,\quad 
   \bsy = (y_1,...,y_d)\,.
 $$
 Clearly, we have
 $$
    \int_0^1 \mathring{K}_{1}^{r_\ell}(x, y) \, \rd x = \int_0^1 K_{1}^{r_\ell}(x, y)  - 
    \sum_{j=0}^{r-1} \sum_{k=0}^{r-1} \frac{G^{-1}_{j,k}}{(j+r)!(k+r)!} 
    	x^{j+r} y^{k+r} \, \rd x\,. 
 $$ 
 A similar computation as above together with the identity
 $$
    \int_0^1 \frac{(y-t)^{r-1}_+}{(r-1)!}\frac{(1-t)^r}{r!}\rd t = \frac{(-1)^r}{(2r)!}\sum\limits_{k=r}^{2r}
    \binom{2r}{k}(-y)^k 
 $$
 (see the computation after \eqref{f7_1}) leads to the following explicit formula 
 \begin{equation} \label{eqn_riesz_representer}
  \int_0^1 \mathring{K}_{1}^{r_\ell}(x, y) \, \rd x =\frac{(-1)^r}{(2r)!}\sum\limits_{k=r}^{2r} \binom{2r}{k}(-y)^k - \sum_{j=0}^{r-1} \sum_{k=0}^{r-1} \frac{G^{-1}_{j,k}}{(j+r+1)!(k+r)!}y^{k+r}\,. 
 \end{equation}

\section{Worst-case error estimates with respect to $\mathring{H}^{\mathbf{r}}_{\text{mix}}$}\label{sec:TheorBounds}
In this section, we are interested in the behavior of the worst-case error 
\begin{equation}\label{eq:error}
e(\scalingFactor,d,\bsr) \,:=\, \sup_{\|f\|_{\mr{H}^{\bsr}_{\text{mix}} \leq 1}}\, \abs{Q_\scalingFactor^d(f)-I_d(f)}.
\end{equation}
of Frolov's cubature rule $Q_\scalingFactor^d$ with respect to the unit ball
in the norm $\|\cdot\|_{\mr{H}^\bsr_{\text{mix}}}$, see \eqref{f2}. 
Recall that 
\[
Q_\scalingFactor^d(f) \,=\, \frac{1}{\scalingFactor}\sum_{\bsk\in\Z^d}\, f(\bm{A}_\scalingFactor \bsk),
\]
where $\bm{A}_\scalingFactor=\scalingFactor^{-1/d}\bm{A}$ and $\bm{A}=\bigl(\det(\bm{V})\bigr)^{-1/d}\bm{V}$
with $\bm{V}$ from Theorem~\ref{construction}. Let further $\bm{B}_\scalingFactor = (\bm{A}_\scalingFactor)^{-\top}$

The main tool for analyzing \eqref{eq:error} is 
Poisson's summation formula. Let $\varphi \in S(\R^d)$ be a multivariate 
Schwartz-function. With $\mathcal{F}\varphi$ we denote the Fourier transform
\begin{equation}\label{FT}
  \mathcal{F}\varphi(\xi) = \int_{-\infty}^{\infty} \varphi(x)\exp(-2\pi i\bsx\cdot\mathbf{\xi})\,d\bsx\quad,\quad \mathbf{\xi}\in \R^d\,.
\end{equation}
Then it holds 
$$
  \sum\limits_{\bsm \in \Z^d} \varphi(x+\bsm) = \sum\limits_{\bsk \in \Z^d} \mathcal{F}\varphi(\bsk)\exp(2\pi i\bsk\cdot \bsx)
$$
with absolute convergence on both sides. The following consequence is of particular importance. Let 
$\bm{A}:\R^d \to \R^d$ be a regular matrix with 
$\det \bm{A} \neq 0$. Let further $\bm{B}=\bm{A}^{-\top}$\,. Then 
we have 
\begin{equation}\label{poiss1}
  \det \bm{A} \sum\limits_{\bsm \in \Z^d} \varphi(\bm{A}(\bsx+\bsm)) = \sum\limits_{\bsk \in \Z^d} \mathcal{F}
  \varphi(\bm{B}\bsk)\exp(2\pi i\bsk\cdot \bsx)
\end{equation}
Let us finally mention the following special case by putting $\bsx = 0$
\begin{equation}\label{poiss2}
  \det \bm{A} \sum\limits_{\bsm \in \Z^d} \varphi(\bm{A}\bsm) = 
  \sum\limits_{\bsk \in \Z^d} \mathcal{F}\varphi(\bm{B}\bsk)
\end{equation}
A more general variant (with respect to the regularity of the participating functions) can be found in \cite[Thm.\ 3.1, Cor.\ 3.2]{UlUl16}
 
In this section we show the by now well-known upper bounds
on the worst-case error of Frolov's cubature formula for the 
Sobolev spaces $\mr{H}^{\mathbf{r}}_{\text{mix}}$.
We give relatively short proofs here with special emphasis on the 
constants. In particular, we will see how the invariants of the used lattice
will affect the error estimates. 

We will see that only two invariants will play a role in the upper bounds, 
which we want to discuss shortly. 
For this note that the lattices under consideration are generated by a 
multiple of a 
Vandermonde matrix  $\bm{V}$, which is defined via a generating polynomial $P$ 
as in Theorem~\ref{construction}. 
The first invariant is the determinant,
or in other words the discriminant of the generating polynomial
\[
 D_P \,=\, \det(\bm{V}).
\]
For example, we know from Theorem~\ref{construction} that 
$\Nm(\bm{V}^{-\top})
=1/D_P^{2}$.

The second invariant is 
\begin{equation}\label{eq:b}
B_P \,:=\, \min_{\bm{U}} \|\bm{V}\bm{U}\|_\infty,
\end{equation}
where the minimum is over all $\bm{U}\in\SLdZ$.
This constant is an upper bound for the diameter (in $\ell_\infty$) of 
the ``smallest'' fundamental cell of the lattice.
To see this, note that every \emph{fundamental cell}, i.e.~a parallelepiped 
with corners on the lattice with no lattice point in the interior, 
is of the form $T([0,1]^d)$, where $T\in\R^{d\times d}$ is a generating matrix
for the lattice. Moreover, it is well-known that every generating matrix of the 
lattice that is generated by $\bm{V}$ is of the form $\bm{V}\bm{U}$ for some unimodular,
integer-valued matrix $\bm{U}$. We will see that both, $D_P$ and $B_P$, should be small to obtain a small upper 
bound on the errors. This justifies the choice of the generating polynomials in the 
previous section. Here is the main result of this section.

\begin{theorem}\label{thm:theor-error}
Let $\bsr=(r_1,\dots,r_d) \in \N^d$ and $\eta:=\#\{j\colon r_j=r\}$. Then we have for any $f\in\mr{H}^\bsr_{\mix}$ 
\begin{equation}\label{f13}
\hspace{-.5cm}
\begin{split}
&\abs{Q_\scalingFactor^d(f)-I_d(f)}\\
&\le\; C(d,\eta,\bsr) \cdot \max\left\{D_P, \frac{(2 B_P)^d}{n}\right\}^{1/2}
\left(\frac{D_P}{n}\right)^{r}\,\Bigl(2+\log\left(n/D_P\right)\Bigr)^{(\eta-1)/2}
\,\norm{f}_{\mr{H}^{\bsr}_{\text{mix}}},
\end{split}
\end{equation}
where 
\begin{equation}\nonumber
\begin{split}
  &C(d,\eta,\bsr)\\
  &:=\, 2^{d+1}\, \left(1-2^{-2(r'-r)}\right)^{-(d-\eta)/2}\,
\left(1-2^{(1-2r)}\right)^{-\eta/2}\left(\sum\limits_{e \subset [d]} 
  \prod\limits_{i\in e} \frac{1}{[(r_i-1)!]^2(2r_i-1)2r_i}\right)^{1/2}
\end{split}
\end{equation}
with $r':=\min_j\{r_j\colon r_j\neq r\}$.
\end{theorem}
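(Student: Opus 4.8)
The plan is to start from Poisson's summation formula \eqref{poiss2}. Applying it with $\varphi=f$, $\bm{A}=\bm{A}_\scalingFactor$ and $\bm{B}=\bm{B}_\scalingFactor$, and using $\det\bm{A}_\scalingFactor=1/\scalingFactor$ together with $I_d(f)=\mathcal{F}f(0)$, I would obtain the exact error representation
\[
Q^d_\scalingFactor(f)-I_d(f)=\sum_{\bsk\in\Z^d\setminus\{0\}}\mathcal{F}f(\bm{B}_\scalingFactor\bsk).
\]
Since $f\in\mr{H}^\bsr_\mix$ need not be Schwartz, I would first establish this identity for $f\in C_0^\infty((0,1)^d)$ and then pass to the limit by density (or invoke the low-regularity variant of Poisson summation from \cite{UlUl16}). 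The term $\bsk=0$ is precisely $I_d(f)$, so only the dual lattice points $\bm{B}_\scalingFactor\bsk$, $\bsk\neq0$, survive; by Proposition~\ref{construction}, the scaling $\bm{B}_\scalingFactor=(\scalingFactor D_P)^{1/d}\bm{V}^{-\top}$ and \eqref{DualityOfNorm} these satisfy $\Nm(\bm{B}_\scalingFactor)=\scalingFactor/D_P$, i.e.\ they avoid the hyperbolic cross of radius $\scalingFactor/D_P$.

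Next I would insert the Fourier weight $v_\bsr$ from \eqref{potential} and apply Cauchy--Schwarz,
\[
\Bigl|\sum_{\bsk\neq0}\mathcal{F}f(\bm{B}_\scalingFactor\bsk)\Bigr|
\le\Bigl(\sum_{\bsk\neq0}v_\bsr(\bm{B}_\scalingFactor\bsk)^{-2}\Bigr)^{1/2}
\Bigl(\sum_{\bsk\neq0}v_\bsr(\bm{B}_\scalingFactor\bsk)^{2}\,|\mathcal{F}f(\bm{B}_\scalingFactor\bsk)|^2\Bigr)^{1/2}
=:\Sigma_1^{1/2}\,\Sigma_2^{1/2}.
\]
The first factor is a purely geometric lattice quantity carrying the convergence rate, while the second will be controlled by the norm of $f$. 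For $\Sigma_2$ I would exploit the compact support of $f$: expanding $v_\bsr(\boldsymbol{\xi})^2=\sum_{e\subset[d]}\prod_{i\in e}(2\pi\xi_i)^{2r_i}$ and using $\mathcal{F}\bigl[\prod_{i\in e}\partial_i^{r_i}f\bigr]=\prod_{i\in e}(2\pi\mathrm{i}\xi_i)^{r_i}\mathcal{F}f$, each summand equals $|\mathcal{F}f^{(e)}|^2$ with $f^{(e)}:=\prod_{i\in e}\partial_i^{r_i}f$ again supported in $[0,1]^d$. Hence $\Sigma_2\le\sum_{e\subset[d]}\sum_{\bsk\in\Z^d}|\mathcal{F}f^{(e)}(\bm{B}_\scalingFactor\bsk)|^2$. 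Periodizing each $f^{(e)}$ over the dual spatial lattice $\bm{A}_\scalingFactor(\Z^d)$ and applying Parseval for lattice-periodic functions turns the inner sum into $\scalingFactor^{-1}\int_{\mathrm{cell}}\bigl|\sum_\lambda f^{(e)}(\cdot+\lambda)\bigr|^2$, which by Cauchy--Schwarz is at most $\scalingFactor^{-1}M\,\|f^{(e)}\|_2^2$, where $M$ is the maximal number of lattice points of $\bm{A}_\scalingFactor(\Z^d)$ meeting a unit cube. Bounding $M$ by the volume of a slightly enlarged cube over the cell volume $1/\scalingFactor$, with enlargement governed by the $\ell_\infty$-diameter $\asymp B_P/(\scalingFactor D_P)^{1/d}$ of the smallest fundamental cell, yields $\scalingFactor^{-1}M\lesssim 2^{d}D_P^{-1}\max\{D_P,(2B_P)^d/\scalingFactor\}$. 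Finally Lemma~\ref{lem51}(ii) converts $\sum_e\|f^{(e)}\|_2^2=\|f\|_{H^\bsr_\mix}^2$ into the $\mr{H}^\bsr_\mix$-norm, producing the constant $\sum_{e}\prod_{i\in e}\frac{1}{[(r_i-1)!]^2(2r_i-1)2r_i}$.

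The main obstacle is the estimate of $\Sigma_1=\sum_{\bsk\neq0}\prod_{i=1}^d\bigl(1+|2\pi(\bm{B}_\scalingFactor\bsk)_i|^{2r_i}\bigr)^{-1}$. Here I would rescale to the covolume-one lattice and perform a dyadic decomposition of the lattice points according to the dyadic block in which each coordinate $|(\bm{B}_\scalingFactor\bsk)_i|$ lies, using the admissibility bound $\prod_i|(\bm{B}_\scalingFactor\bsk)_i|\ge\scalingFactor/D_P$ to control the number of points in each anisotropic box. Summing the resulting geometric series separately over the $d-\eta$ directions of higher smoothness (each contributing a convergent factor $(1-2^{-2(r'-r)})^{-1}$) and over the $\eta$ minimal directions (each contributing $(1-2^{1-2r})^{-1}$ together with one logarithmic factor) should yield
\[
\Sigma_1\;\lesssim\;D_P\,(D_P/\scalingFactor)^{2r}\bigl(2+\log(\scalingFactor/D_P)\bigr)^{\eta-1}
\]
up to the stated constants. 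Combining $\Sigma_1^{1/2}$ and $\Sigma_2^{1/2}$, the spurious factor $D_P^{1/2}$ from $\Sigma_1$ cancels the $D_P^{-1/2}$ from $\Sigma_2$, leaving $\max\{D_P,(2B_P)^d/\scalingFactor\}^{1/2}(D_P/\scalingFactor)^{r}(2+\log(\scalingFactor/D_P))^{(\eta-1)/2}$, while $2^{d+1}$, $(1-2^{-2(r'-r)})^{-(d-\eta)/2}$, $(1-2^{1-2r})^{-\eta/2}$ and the square root of the norm-equivalence constant assemble into $C(d,\eta,\bsr)$, giving exactly \eqref{f13}. I expect the careful bookkeeping of the powers of $D_P$ across $\Sigma_1$ and $\Sigma_2$, and the extraction of exactly $(\eta-1)/2$ logarithms with the precise geometric constants, to be the delicate points.
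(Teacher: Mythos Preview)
Your proposal is correct and follows essentially the same route as the paper: Poisson summation \eqref{poiss2}, the Cauchy--Schwarz split into $\Sigma_1,\Sigma_2$, handling $\Sigma_2$ by the reverse Poisson formula \eqref{poiss1} together with the overlap count $M(\bm{A}_\scalingFactor)$ and Lemma~\ref{lem51}(ii), and bounding $\Sigma_1$ via the dyadic boxes $\rho(\bsm)$ and the lattice point count \eqref{f10}. The only discrepancy is your expected $D_P^{\pm1/2}$ cancellation: in the paper's bookkeeping the overlap factor is bounded via the $\Nm$-based estimate \eqref{f10}, giving $M(\bm{A}_\scalingFactor)/\det\bm{B}_\scalingFactor\le 2^d\max\{D_P,(2B_P)^d/\scalingFactor\}$ directly, while $\Sigma_1$ produces only $(D_P/\scalingFactor)^{2r}$ with no extra factor of $D_P$, so no cancellation occurs.
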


Let us prove the following estimate first. 

\begin{proposition} Let $\varphi \in C_0^{\infty}((0,1)^d)$. Then 
\begin{equation}\label{f11}
   \abs{Q_\scalingFactor^d(\varphi)-I_d(\varphi)} \leq 
   \sqrt{\frac{M(\bm{A}_\scalingFactor)}{\det \bm{B}_\scalingFactor}}\Big(\sum\limits_{\bsk \in \Z^d\setminus \{\mathbf{0}\}} |v_{\bsr}(
   \bm{B}_\scalingFactor\bsk)|^{-2}\Big)^{1/2} \|\varphi\|_{H^{\bsr}_\text{mix}}\,,
\end{equation}
where 
\begin{equation}\label{f9}
  M(\bm{A}_\scalingFactor) \,:=\, \min_{\bm{U}\in\SLdZ}\#\Bigl\{\bsm\in\Z^d\colon \bm{UA}_\scalingFactor\bigl(\bsm+(0,1)^d\bigr)\cap[0,1]^d\neq\emptyset\Bigr\}\,,
\end{equation}
is the minimal number of fundamental cells of the integration lattice necessary 
to cover the unit cube. 
\end{proposition}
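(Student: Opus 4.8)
The plan is to move everything to the Fourier side via Poisson summation, peel off the weight $v_{\bsr}$ with Cauchy--Schwarz, and then control the remaining sampled energy by a periodization/Parseval argument on the lattice $\Gamma=\bm A_\scalingFactor(\Z^d)$, the crux being to recognize the covering number $M(\bm A_\scalingFactor)$ at the very end. First I would use $|\det\bm A_\scalingFactor|=1/\scalingFactor$ to write $Q_\scalingFactor^d(\varphi)=\det\bm A_\scalingFactor\sum_{\bsm}\varphi(\bm A_\scalingFactor\bsm)$ and apply \eqref{poiss2}, obtaining $Q_\scalingFactor^d(\varphi)=\sum_{\bsk}\mathcal F\varphi(\bm B_\scalingFactor\bsk)$. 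Since $\varphi$ is supported in $(0,1)^d$ we have $I_d(\varphi)=\int_{\R^d}\varphi=\mathcal F\varphi(0)$, so the $\bsk=0$ term cancels and $Q_\scalingFactor^d(\varphi)-I_d(\varphi)=\sum_{\bsk\neq0}\mathcal F\varphi(\bm B_\scalingFactor\bsk)$. A Cauchy--Schwarz estimate with the weight $v_{\bsr}$ then gives
\[
\abs{Q_\scalingFactor^d(\varphi)-I_d(\varphi)}\le\Big(\sum_{\bsk\neq0}\abs{v_{\bsr}(\bm B_\scalingFactor\bsk)}^{-2}\Big)^{1/2}\Big(\sum_{\bsk}\abs{v_{\bsr}(\bm B_\scalingFactor\bsk)\,\mathcal F\varphi(\bm B_\scalingFactor\bsk)}^2\Big)^{1/2},
\]
where I have harmlessly enlarged the second sum to all $\bsk$. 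It remains to bound the second factor by $\sqrt{M(\bm A_\scalingFactor)/\det\bm B_\scalingFactor}\,\norm{\varphi}_{H^{\bsr}_{\text{mix}}}$.

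For the second factor I would exploit the product structure $v_{\bsr}(\boldsymbol\xi)^2=\sum_{e\subset[d]}\prod_{i\in e}\abs{2\pi\xi_i}^{2r_i}$ and set $\varphi_e:=\bigl(\prod_{i\in e}\partial^{r_i}_{x_i}\bigr)\varphi\in C_0^\infty((0,1)^d)$, so that $\mathcal F\varphi_e=\prod_{i\in e}(2\pi\imag\,\xi_i)^{r_i}\mathcal F\varphi$ and $\abs{v_{\bsr}\mathcal F\varphi}^2=\sum_e\abs{\mathcal F\varphi_e}^2$. Hence the sampled energy equals $\sum_{e\subset[d]}\sum_{\bsk}\abs{\mathcal F\varphi_e(\bm B_\scalingFactor\bsk)}^2$. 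The whole point of this splitting is that each $\varphi_e$ is again \emph{compactly supported in $[0,1]^d$}; this is exactly what will let the next step see the covering number of the \emph{unit} cube rather than a larger set.

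For a fixed $e$ I would periodize over $\Gamma=\bm A_\scalingFactor(\Z^d)$, whose dual lattice is $\bm B_\scalingFactor(\Z^d)$ and whose covolume is $\det\Gamma=1/\det\bm B_\scalingFactor$. The $\Gamma$-periodization $\varphi_e^\Gamma(x):=\sum_{\gamma\in\Gamma}\varphi_e(x+\gamma)$ has torus Fourier coefficients $\det\bm B_\scalingFactor\,\mathcal F\varphi_e(\bm B_\scalingFactor\bsk)$, so Parseval on $\R^d/\Gamma$ yields $\sum_{\bsk}\abs{\mathcal F\varphi_e(\bm B_\scalingFactor\bsk)}^2=\tfrac{1}{\det\bm B_\scalingFactor}\int_{\R^d/\Gamma}\abs{\varphi_e^\Gamma}^2$. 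Writing $n(x):=\#\{\gamma\in\Gamma\colon x+\gamma\in[0,1]^d\}$ for the number of nonvanishing terms, Cauchy--Schwarz gives $\abs{\varphi_e^\Gamma(x)}^2\le n(x)\sum_{\gamma}\abs{\varphi_e(x+\gamma)}^2$, and unfolding $\int_{\R^d/\Gamma}\sum_{\gamma}\abs{\varphi_e(x+\gamma)}^2\rd x=\norm{\varphi_e}_2^2$ leaves
\[
\sum_{\bsk}\abs{\mathcal F\varphi_e(\bm B_\scalingFactor\bsk)}^2\le\frac{1}{\det\bm B_\scalingFactor}\bigl(\operatorname*{ess\,sup}_x n(x)\bigr)\norm{\varphi_e}_2^2.
\]
Summing over $e$ and using $\sum_e\norm{\varphi_e}_2^2=\norm{\varphi}_{H^{\bsr}_{\text{mix}}}^2$ then closes the estimate, \emph{provided} $\operatorname*{ess\,sup}_x n(x)\le M(\bm A_\scalingFactor)$.

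I expect the last geometric identification to be the main obstacle, since it is where the sharp constant is decided. I would argue it as follows: for any generating matrix $\bm S=\bm U\bm A_\scalingFactor$ of $\Gamma$, the function $n$ is $\Gamma$-periodic, so it suffices to take $x$ in the open fundamental cell $\bm S(0,1)^d$; for such $x$, every $\gamma$ counted in $n(x)$ produces a \emph{distinct} translated cell $\gamma+\bm S(0,1)^d$ that contains the point $x+\gamma\in[0,1]^d$ and hence meets the unit cube. Therefore $n(x)$ is at most the number of fundamental cells meeting $[0,1]^d$, and minimizing over $\bm U\in\SLdZ$ gives $\operatorname*{ess\,sup}_x n(x)\le M(\bm A_\scalingFactor)$, as defined in \eqref{f9}. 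It is worth flagging that the more naive route --- applying Poisson to $\abs{\mathcal F\varphi_e}^2$ and counting lattice points in the autocorrelation support $[-1,1]^d$ --- overshoots by a factor $2^d$, whereas this periodization argument delivers exactly the unit-cube covering number $M$.
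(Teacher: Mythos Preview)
Your proof is correct and follows essentially the same route as the paper: Poisson summation, Cauchy--Schwarz with the weight $v_{\bsr}$, splitting via $\varphi_e$, and then a Parseval/periodization step over the lattice combined with Cauchy--Schwarz on the finite overlap sum. The only cosmetic difference is that the paper writes the Parseval step on $[0,1]^d$ and then invokes \eqref{poiss1} to reach the spatial periodization, whereas you phrase the same identity directly on $\R^d/\Gamma$; your explicit treatment of the bound $\operatorname*{ess\,sup}_x n(x)\le M(\bm A_\scalingFactor)$ is in fact more detailed than the paper's one-line appeal to~\eqref{f9} (note only that a generating matrix of $\Gamma=\bm A_\scalingFactor(\Z^d)$ is $\bm A_\scalingFactor\bm U$ rather than $\bm U\bm A_\scalingFactor$, though this does not affect the argument).
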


\begin{proof} The above special case of Poisson's summation formula \eqref{poiss2} gives 
\begin{equation}\label{f7}
 \begin{split}
  &\abs{Q_\scalingFactor^d(\varphi)-I_d(\varphi)} = \Big|\sum\limits_{\bsk\in \Z^d\setminus \{\mathbf{0}\}}
  \mathcal{F}\varphi(\bm{B}_\scalingFactor \bsk)\Big|\\
  &~~\leq \Big(\sum\limits_{\bsk\in \Z^d\setminus \{\mathbf{0}\}}v_{\mathbf{r}}(\bm{B}_\scalingFactor\bsk)^{-2}\Big)^{1/2}
  \Big(\sum\limits_{\bsk \in \Z^d}  |v_{\mathbf{r}}(\bm{B}_\scalingFactor\bsk)
  \mathcal{F}\varphi(\bm{B}_\scalingFactor\bsk)|^2\Big)^{1/2}\\
 \end{split}
\end{equation}
By the definition of $v_{\bsr}$ we may rewrite
$$
  |v_{\mathbf{r}}(\bm{B}_\scalingFactor\bsk)
  \mathcal{F}\varphi(\bm{B}_\scalingFactor\bsk)|^2= \sum\limits_{e \subset [d]}\Big|\mathcal{F}\Big[\Big(\prod\limits_{i\in e} 
  \frac{\partial^{r_i}}{\partial x_i^{r_i}}\Big)\varphi\Big](\bm{B}_\scalingFactor\bsk)\Big|^2
$$
Using this for the second factor in \eqref{f7} we find 
\begin{equation}\nonumber
  \begin{split}
    &\sum\limits_{\bsk \in \Z^d}  |v_{\mathbf{r}}(\bm{B}_\scalingFactor\bsk)
  \mathcal{F}\varphi(\bm{B}_\scalingFactor\bsk)|^2\\
  &~~=\sum\limits_{e \subset [d]}\int\limits_{[0,1]^d}
  \Big|\sum\limits_{\bsk \in \Z^d}\mathcal{F}\Big[\Big(\prod\limits_{i\in e} 
  \frac{\partial^{r_i}}{\partial x_i^{r_i}}\Big)\varphi\Big](\bm{B}_\scalingFactor\bsk)\exp(2\pi i\bsk\cdot \bsx)\Big|^2\,d\bsx\,.
  \end{split}
\end{equation}
Now we apply Poisson's summation formula in the form \eqref{poiss1} to the integrand and find
\begin{equation}
  \begin{split}
    &\sum\limits_{\bsk \in \Z^d}  |v_{\mathbf{r}}(\bm{B}_\scalingFactor\bsk)
  \mathcal{F}\varphi(\bm{B}_\scalingFactor\bsk)|^2\\
  &~~=(\det \bm{A}_\scalingFactor)^2\sum\limits_{e \subset [d]}\int\limits_{[0,1]^d}
  \Big|\sum\limits_{\bsm \in \Z^d}\Big[\Big(\prod\limits_{i\in e} 
  \frac{\partial^{r_i}}{\partial x_i^{r_i}}\Big)\varphi\Big](\bm{A}_\scalingFactor(\bsx+\bsm))\Big|^2\,d\bsx\\
  &~~\leq(\det \bm{A}_\scalingFactor)^2M(\bm{A}_\scalingFactor)\sum\limits_{e \subset [d]}\sum\limits_{\bsm \in \Z^d}
  \int\limits_{[0,1]^d}\Big|\Big[\Big(\prod\limits_{i\in e} 
  \frac{\partial^{r_i}}{\partial x_i^{r_i}}\Big)\varphi\Big](\bm{A}_\scalingFactor(\bsx+\bsm))\Big|^2\,d\bsx\\
  &~~=(\det \bm{A}_\scalingFactor)M(\bm{A}_\scalingFactor)\sum\limits_{e \subset [d]}
  \int_{\R^d}\Big|\Big[\Big(\prod\limits_{i\in e} 
  \frac{\partial^{r_i}}{\partial x_i^{r_i}}\Big)\varphi\Big](\bsy)\Big|^2\,d\bsy\\
  &= \frac{M(\bm{A}_\scalingFactor)}{\det \bm{B}_\scalingFactor}\|\varphi\|^2_{H^{\bsr}_{\text{mix}}}\,,
  \end{split}
\end{equation}
where we used H\"older's inequality and the fact that $\varphi$ and all its partial derivatives have 
compact support in $(0,1)^d$ together with \eqref{f9}. 
\end{proof}
\begin{remark} Let us comment on the number $M(\bm{A}_\scalingFactor)$. Clearly, all the 
fundamental cells are contained in 
$[-L(\scalingFactor,P),1+L(\scalingFactor,P)]^d$ with $L(\scalingFactor,P):=(D_P \scalingFactor)^{-1/d} B_P$ and
$B_P$ from~\eqref{eq:b}. Here, we used that $\bm{A}_\scalingFactor=(D_P \scalingFactor)^{-1/d}\bm{V}$.
Therefore, $M(\bm{A}_\scalingFactor)$ is bounded 
by the number of lattice points $\bm{A}_\scalingFactor(\Z^d)$ in this set. 
This number can be controlled by \eqref{f10} below, which will be also
of some importance later. For a proof see e.g.~\cite[Lem.\ 5]{MU17}. 
In fact, for every axis-parallel box $\Omega\subset\R^d$ and every $T\in\R^{d\times d}$ 
we have 
\begin{equation}\label{f10}
\#\left(\bm{T}(\Z^d)\cap\Omega\right) \,\le\, 
\frac{\vol_d(\Omega)}{\Nm(\bm{T})} +1.
\end{equation}
With all the definitions from above and $\Nm(\bm{V})=1$, we obtain that 
\begin{equation}\label{eq:M}
M(\bm{A}_\scalingFactor) \,\le\, n\, D_P\,\left(1+\frac{2 B_P}{(D_P n)^{1/d}}\right)^d + 1 
\,\le\, n\,2^d\, \max\left\{D_P,\, \frac{(2 B_P)^d}{n}\right\}.
\end{equation}
We see that the bound of the second factor of the above error bound depends 
asymptotically only on $\sqrt{D_P}$ (and the norm of $f$). 
However, for preasymptotic bounds also the term $B_P^{d/2}/\sqrt{n}$ 
plays an important role. 
\end{remark}

\begin{proof}
To finish the proof of Theorem \ref{thm:theor-error} it remains to estimate the middle factor in \eqref{f11}. 
In fact, the statement \eqref{f13} then follows by a straight-forward density argument recalling \eqref{f1}. 

If $\bsr=(r,\dots,r)$ with $r\in\N_0$ is a constant smoothness vector, the following proof can be found 
in several articles, see e.g.~\cite{tem93} or \cite[p.~580]{MU16}. Note, that it also works for fractional 
$r>1/2$, which is essentially shown in \cite{UlUl16}. 
Although the optimal order of convergence is known also in the non-constant case, 
we were not able to find a proof with explicit constants. 
Therefore, we give it here.
We assume without restriction that $r_1=\dots= r_\eta<r_{\eta+1}\le\dots\le r_d$ 
for some $\eta\in\{1,\dots,d\}$.

First, for $\bsm=(m_1,\dots,m_d)\in\N_0^d$, we define the sets 
\[
\rho(\bsm) \,:=\, \{\bsx\in\R^d\colon \lfloor2^{m_j-1}\rfloor\le |x_j|<2^{m_j} 
\text{ for } j=1,\dots,d\}.
\]
Note that $\prod_{j=1}^d|x_j|<2^{\|\bsm\|_1}$ for all $x\in\rho(\bsm)$. 
Since $B_\scalingFactor=\scalingFactor^{1/d}B=(D_p \scalingFactor)^{1/d}\, \bm{V}^{-\top}$ we have
\[
\Nm(B_\scalingFactor) \,=\, \inf_{\bsk\in\Z^d\setminus\{0\}}\prod_{j=1}^d (B_\scalingFactor \bsk)_j
\,=\, \frac{\scalingFactor}{D_p}.
\]
This shows that $|(B_\scalingFactor(\Z^d)\setminus0)\cap\rho(\bsm)|=0$ for all 
$\bsm\in\N_0^d$ with $\|\bsm\|_1< R_\scalingFactor$, where 
\[
R_\scalingFactor \,:=\, \left\lceil \log_2\bigl(\scalingFactor/D_p\bigr) \right\rceil.
\]
Moreover, for $B_\scalingFactor\bsk\in\rho(\bsm)$, we have 
\[
\nu_{*,\bsr}(B_\scalingFactor\bsk) \,\ge\, \nu_{\bsr}(B_\scalingFactor\bsk) 
\,\ge\, \prod_{j=1}^d\max\{1,2\pi\lfloor2^{m_j-1}\rfloor\}^{r_j}
\,\ge\, 2^{r_1 m_1+\dots+r_d m_d}.
\]
Since $\rho(\bsm)$ is a union of $2^d$ axis-parallel boxes 
each with volume less than $2^{\|\bsm\|_1}$, \eqref{f10} implies that 
$\abs{B_\scalingFactor(\Z^d)\cap\rho(\bsm)} \le 2^d(D_p 2^{\|\bsm\|_1}/\scalingFactor+1)
\le 2^{d+2} 2^{\|\bsm\|_1-R_\scalingFactor}$ 
if $\|\bsm\|_1\ge R_\scalingFactor$.
Additionally, note that 
$\abs{\{\bsm\in\N_0^\eta\colon \|\bsm\|_1=\ell\}}=\binom{\eta-1+\ell}{\eta-1}$.
With $r:=r_1$ and $r':=r_{\eta+1}$, we obtain 
\[\begin{split}
\sum_{\bsk\in\Z^d\setminus0} &|\nu_{*,\bsr}(B_\scalingFactor\bsk)|^{-2}
\,\le\, \sum_{m: \|m\|_1\ge R_\scalingFactor}\,
			\abs{B_\scalingFactor(\Z^d)\cap\rho(m)}\, 2^{-2r_1 m_1-\ldots-2r_d m_d}\\
\,&\le\, 2^{d+2}\, \sum_{m: \|m\|_1\ge R_\scalingFactor}\, 
				2^{\|m\|_1-R_\scalingFactor}\, 2^{-2r (m_1+\ldots+m_\eta)-2r'(m_{\eta+1}+\ldots+m_d)} \\
\,&=\, 2^{d+2}\, \sum_{\ell=R_\scalingFactor}^\infty\sum_{m: \|m\|_1=\ell}\, 
				2^{\|m\|_1-R_\scalingFactor}\, 2^{-2r (m_1+\ldots+m_\eta)-2r'(m_{\eta+1}+\ldots+m_d)} \\
\,&=\, 2^{d+2}\, \sum_{\ell=R_\scalingFactor}^\infty\, \sum_{\ell'=0}^\ell \;
				\sum_{\substack{m_1,\dots,m_\eta:\\ \sum_{j=1}^\eta m_j=\ell-\ell'}} \;
				\sum_{\substack{m_{\eta+1},\dots,m_d:\\ \sum_{j=\eta+1}^d m_j=\ell'}}
				2^{\ell-R_\scalingFactor}\, 2^{-2r (\ell-\ell')-2r'\ell'} \\
\,&=\, 2^{d+2}\, \sum_{\ell=R_\scalingFactor}^\infty\, \sum_{\ell'=0}^\ell 
				\binom{\eta-1+\ell-\ell'}{\eta-1}\, \binom{d-\eta-1+\ell'}{d-\eta-1}\,
				2^{\ell-R_\scalingFactor-2r \ell }\, 2^{-2(r'-r)\ell'} \\
\,&\le\, 2^{d+2}\, \sum_{\ell=R_\scalingFactor}^\infty\, \binom{\eta-1+\ell}{\eta-1}\, 
				2^{\ell-R_\scalingFactor-2r \ell }\;	\sum_{\ell'=0}^\ell 
				 \binom{d-\eta-1+\ell'}{d-\eta-1}\,
				 2^{-2(r'-r)\ell'} \\
\end{split}\]
In the last estimate we used that $\binom{k+\ell}{k}\le\binom{k+\ell+1}{k}$ for 
every $k,\ell\in\N$. To bound the two sums above we use the well-known binomial identity 
\[
\sum_{\ell=0}^\infty \binom{D+\ell}{D}\, x^\ell \,=\, \frac{1}{(1-x)^{D+1}}
\]
as well as the bound 
\[
\binom{D+\ell+R}{D} \,\le\, \binom{D+\ell}{D}\,(1+R)^D
\]
for $D,\ell,R\in\N$ and $x\in\C$ with $|x|<1$.
We obtain for the second sum that
\[
\sum_{\ell'=0}^\ell \binom{d-\eta-1+\ell'}{d-\eta-1}\, 2^{-2(r'-r)\ell'} 
\,\le\, \left(1-2^{-2(r'-r)}\right)^{-(d-\eta)}
\]
and for the first sum that
\[\begin{split}
\sum_{\ell=R_\scalingFactor}^\infty\, \binom{\eta-1+\ell}{\eta-1}\, 2^{\ell-R_\scalingFactor-2r \ell } 
\,&=\, \sum_{\ell=0}^\infty\, \binom{\eta-1+\ell+R_\scalingFactor}{\eta-1}\, 2^{\ell-2r(\ell+R_\scalingFactor) } \\
\,&\le\, 2^{-2r R_\scalingFactor}\,(1+R_\scalingFactor)^{\eta-1}\,\sum_{\ell=0}^\infty\, \binom{\eta-1+\ell}{\eta-1}\, 2^{(1-2r)\ell} \\
\,&=\, 2^{-2r R_\scalingFactor}\,(1+R_\scalingFactor)^{\eta-1}\, \left(1-2^{(1-2r)}\right)^{-\eta}
\end{split}\]
for $r>1/2$.
If we use 
$\log_2\bigl(n/D_p\bigr) \le R_\scalingFactor \le 1+\log_2\bigl(n/D_p\bigr)$
we finally obtain Theorem~\ref{thm:theor-error}.\end{proof}

\section{Numerical results: Exact worst-case errors in $\mathring{H}^{\mathbf{r}}_\mix$} \label{sec:numerics}
In Section \ref{sec:TheorBounds} it has been shown that the Frolov method achieves the optimal rate of convergence in Sobolev spaces with both, uniform and 
anisotropic mixed smoothness. However, as we have seen in Section \ref{sec:polynomials}, there are different ways to choose the polynomials, 
which significantly influence the numerical performance. Therefore, even though the asymptotic 
convergence rate of all (admissible) Frolov cubature rules have the optimal order
$\mathcal{O} (N^{-r} (\log N)^{(d-1)/2})$ for uniform smoothness $f$,
there might be huge constants involved. In order to investigate the influence of different Frolov polynomials on the preasymptotic 
behavior of the integration error, we use a well-known technique for reproducing kernel Hilbert spaces 
to compute the worst-case error explicitly. This supplements the theoretical bounds from Section \ref{sec:TheorBounds}.
Moreover, we compare the worst-case errors of Frolov cubature, the sparse grid method and quasi--Monte 
Carlo methods in $\mathring{H}^r_\mix$.

\subsection{Exact worst-case errors via reproducing kernels}
The worst-case error of any linear cubature rule $Q_N(f) = \sum_{i=1}^N w_i f(\bsx_i)$ with prescribed 
weights and nodes can be computed \emph{exactly} via the norm of the error functional $R_N(f) := I_d(f) - Q_N(f)$, cf. Eq. \eqref{eqn_functional_norm}. Applying $R_N$ to both components of the kernel $ \mathring{K}_{d}^\bsr(\bsx, \bsy)$, the well-known formula for the (absolute) worst-case error is obtained, i.e
\begin{equation} \label{eqn_wce_formula}
	\begin{aligned}
		\sup_{\|f\|_{\mathring{H}^\bsr_\mix} \leq 1} |R_N(f)|^2 & = \int\displaylimits_{[0,1]^d} \int\displaylimits_{[0,1]^d} \mathring{K}_{d}^\bsr(\bsx, \bsy) \, \rd \bsx \rd \bsy - 2 \sum_{i=1}^N w_i \int\displaylimits_{[0,1]^d} \mathring{K}_{d}^\bsr(\bsx_i, \bsy) \, \rd \bsy \\
		& \quad \quad + \sum_{i=1}^N \sum_{j=1}^N w_i w_j \mathring{K}_{d}^\bsr(\bsx_i, \bsx_j) .
	\end{aligned}
\end{equation}

Often, \eqref{eqn_wce_formula} is normalized with respect to norm of 
$I_d$ in the dual-space $(\mathring{H}^\bsr_\mix)^\star$, i.e. 
\eqref{eqn_wce_formula} is divided by 
$\|I_d\|_{(\mathring{H}^\bsr_\mix)^\star} = (\int_{[0,1]^d}\int_{[0,1]^d}  \mathring{K}_{d}^\bsr(\bsx, \bsy) \, \rd \bsx \rd \bsy )^{1/2}$, 
cf. \eqref{eqn_initial_error}.
The resulting quantity is called \emph{normalized worst-case error}.


In order to evaluate \eqref{eqn_wce_formula} for an arbitrary given cubature rule we use the closed-form
representation of the kernel $\mathring{K}^{\bsr}_{d}$ from Theorem \ref{repr_kernel} as well as the closed-form representation of the Riesz-representer \eqref{eqn_riesz_representer}.

Besides Frolov cubature rules, we will consider the sparse grid construction, which goes back to 
Smolyak \cite{Smolyak:1963}, and also higher-order quasi--Monte Carlo integration \cite{DiPi10}. 
Examples for the different point constructions are given in Figure \ref{fig_cubature}. Their properties will be discussed below.

The Frolov points are generated using our newly developed  Algorithm \ref{QRassemble}. The resulting points obtained by the improved polynomial construction can also be downloaded from \url{http://wissrech.ins.uni-bonn.de/research/software/frolov}.

\begin{figure}[t]
 \includegraphics[width=0.32\linewidth]{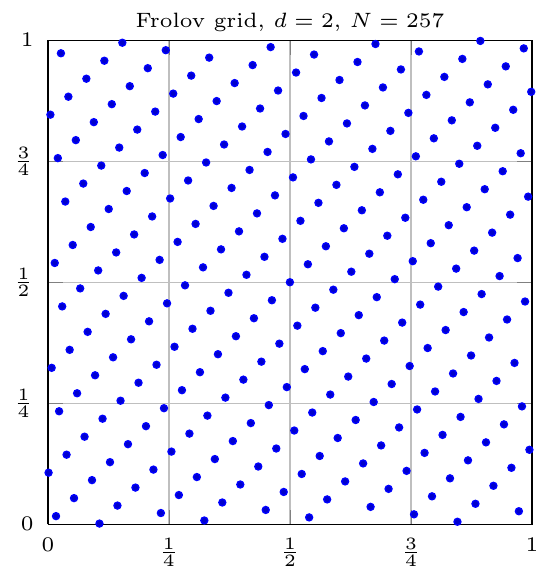}
 \includegraphics[width=0.32\linewidth]{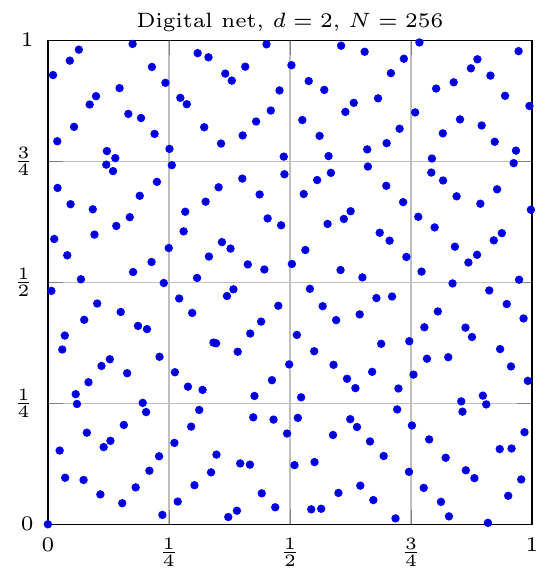}
 \includegraphics[width=0.32\linewidth]{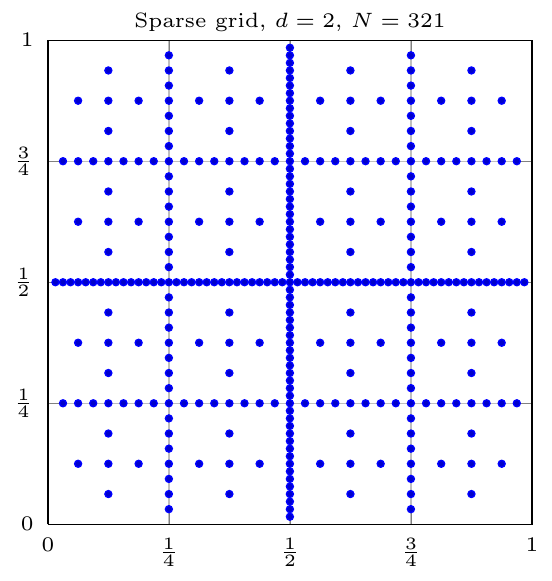}
 
 \caption{A Frolov lattice (left), an order-$2$ digital net (middle) and a zero  boundary sparse grid (right).} \label{fig_cubature}
\end{figure}

\subsection{Uniform mixed smoothness}
As a first step we compare worst-case errors for cubature formulas that
are known to work well in periodic Sobolev spaces,
of which  $\mathring{H}^r_\mix$ is a subset. These are different Frolov
cubature rules, that are based on different choices of the
generating polynomial. In the following, ''Classical Frolov'' will refer to the classical generating polynomial in \eqref{eqn_classical_polynomial}, while ''Improved Frolov'' will refer to the lattices that are generated by the improved polynomials from Section \ref{sec:polynomials}. Moreover, we
consider the sparse grid method that is based on the
trapezoidal rule, see Appendix \ref{sec_sparsegrid}. Due to the zero-boundary condition in $\mathring{H}^r$, all points with one component equal to zero are left out, cf. Figure \ref{fig_cubature}.\footnote{This is similar to the open trapezoidal rule which, however, uses different weights, cf. \cite{Gerstner.Griebel:1998}.}
It achieves a
convergence rate of
order $\mathcal{O}(N^{-r} (\log N)^{(d-1)(r+1/2)})$ in
$\mathring{H}^r_\mix$, which is best possible for a sparse grid method, cf. Theorem \ref{cor_sg} below. 
As an example for a higher order quasi--Monte Carlo method we use a
digital net of order $2$ that is obtained by interlacing
the digits of a $(2d)$-dimensional Niederreiter-Xing net. This is
obtained by using the implementation of Pirsic  \cite{Pirsic} of 
Xing-Niederreiter sequences \cite{NiederreiterXing} for rational places
in dimension $2d-1$. These are known to yield smaller $t$-values than
e.g. Sobol- or classical Niederreiter-sequences \cite{Dick2008572}.
Then, a $2d$-dimensional digital net is obtained by employing the
sequence-to-net propagation rule, cf. \cite{DiPi10,Niederreiter1996241} for
more details.
It is known that order-$2$ nets yield the optimal rate of convergence in periodic Sobolev spaces with bounded mixed derivatives of order $r < 2$, see \cite{Hinrichs.Markhasin.Oettershagen.ea:2016} and also \cite{GodaQMC}, since $\mathring{H}^r_\mix \subset H^r_\mix(\mathbb{T}^d)$.

Moreover, in the bivariate setting we also consider the Fibonacci
lattice, which is not just known to be an order-optimal cubature rule
for periodic Sobolev spaces with dominating mixed smoothness \cite{DTU16}, but also
represents the best possible point set for quasi -- Monte Carlo
intergation in this space, at least for small point numbers
\cite{Hinrichs.Oettershagen:2016}.

\begin{figure}
\centering
 \includegraphics[width=0.45 \linewidth]{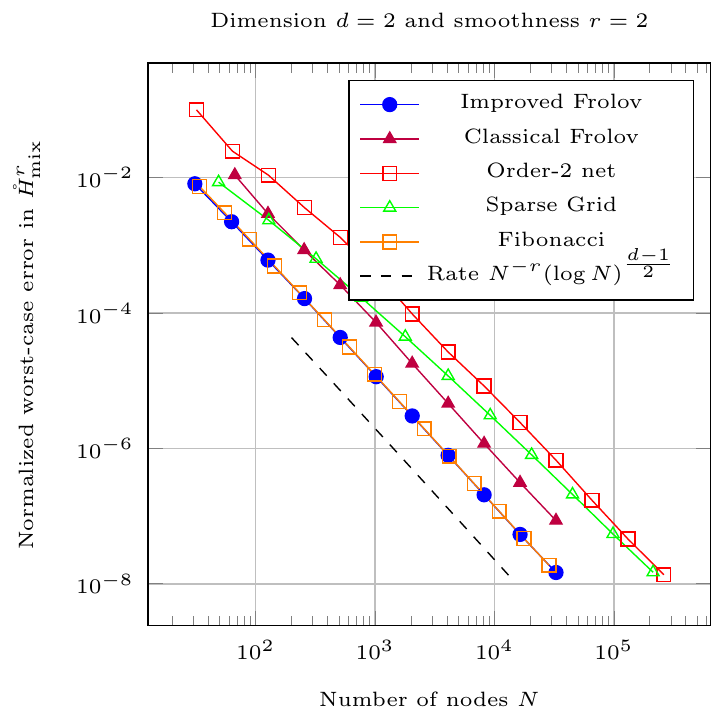}\,
 \includegraphics[width=0.45 \linewidth]{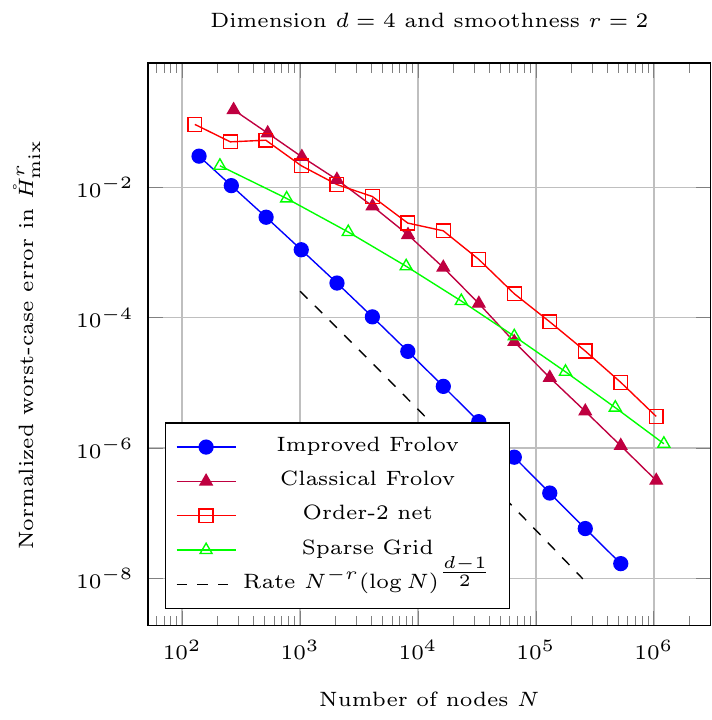}
 \caption{Worst-case errors for different cubature rules for uniform mixed smoothness $r=2$ in
dimensional $d=2$ (left) and dimension $d=4$ (right).} \label{fig_d2_d4_wce}
\end{figure}

In the left-hand-side picture of Figure \ref{fig_d2_d4_wce}, the
worst-case errors for smoothness $r=2$ are computed in dimension $d=2$.
Clearly, the Frolov lattice based on the improved polynomial performs
best in $\mathring{H}^r_\mix$. Of similar quality is the Fibonacci
lattice and the classical Frolov lattice is slightly worse. The sparse
grid also achieves the optimal main rate of $N^{-r}$, but it is known
that the exponent of its logarithm is smoothness dependent. This is also
apparent in Figure \ref{fig_d2_d4_wce}, where the sparse grid has an
asymptotic behavior that is inferior to all the other considered
methods. On the right-hand-side of Figure \ref{fig_d2_d4_wce}, the
worst-case errors for smoothness $r=2$ are computed in dimension $d=4$.
Here, the Fibonacci lattice is not considered. However, for all the
other methods we note that the picture does not change much, compared to
the case $d=2$. As before, the improved Frolov method performs best and
the classical Frolov  obtains the same optimal asymptotic convergence
rate but a substantially worse constant. This effect is now much
stronger than in the bivariate setting, i.e. the classical Frolov lattice has
a worst-case error that is about two magnitudes larger than the one of
the improved Frolov lattice. Moreover, the order-$2$ digital net seems
to be competitive too, albeit with a substantially larger constant and
longer pre-asymptotic regime. Again, the worse logarithmic exponent of
the sparse grid method can be clearly observed.

In the Figures \ref{fig_r1}, \ref{fig_r2} and \ref{fig_r3} the influence of
the dimensionality and the smoothness onto the performance of the Frolov
cubature method is considered in more detail.
As an example for a cubature method with a less than optimal complexity, the sparse grid method is also included.
Especially the classical construction suffers from a strong growth of the constant as the
dimensionality increases. Also, the pre-asymptotic regime seems to last
longer. This effect can so far not be thoroughly explained by the
existing theory. In dimension $d=7$, the classical Frolov construction
needs more than $10^6$ points to achieve the error level of the
zero-algorithm, i.e. normalized worst-case error $1$. Note at this
point, that all given errors are normalized worst-case errors, which
can, for non optimally weighted cubature rules, be substantially larger
than $1$.
It is apparent that the classical Frolov method is practically useless
in dimension $d \geq 5$, due to its unfavorable pre-asymptotic behavior.
Our new approach, however, shows a much better dependence onto the
dimensionality and certainly allows the treatment of
moderate-dimensional integrals from Sobolev spaces with dominating mixed
smoothness of uniform type.

Moreover, we observe the universality of Frolov's method, i.e. without adaption to the respective parameters
it achieves the best possible rate of convergence in every $\mathring{H}^r_\mix$, 
$r \in \{1,2,3\}$.

\subsection{Anisotropic mixed smoothness}

%

\begin{figure}[t]
\centering
 \includegraphics[width=0.41\linewidth]{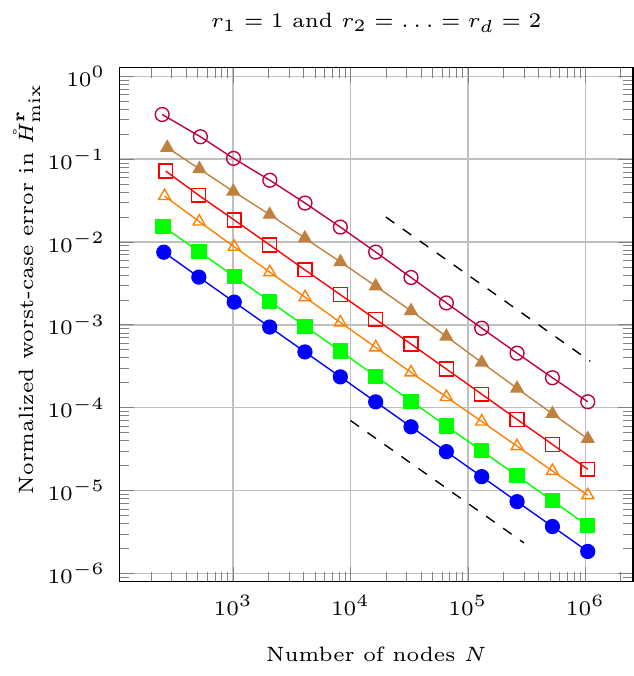} \, \includegraphics[width=0.56\linewidth]{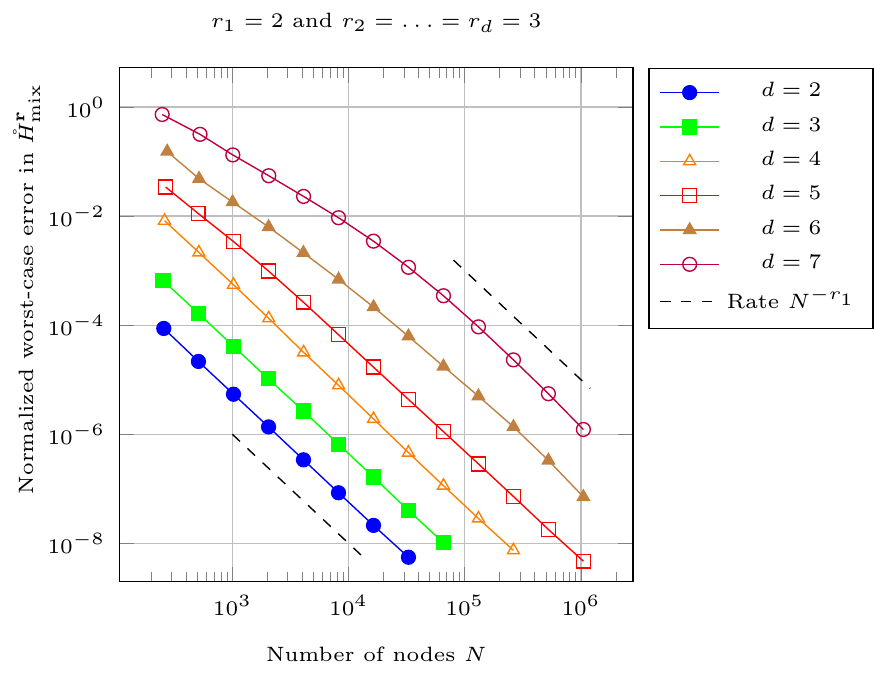}
 
 \caption{Worst-case errors for uniform mixed smoothness in various dimensions. Left-hand side: $r_1=1$ and $r_2 = \cdots = r_d = 2$. Left-hand side: $r_1=2$ and $r_2 = \cdots = r_d = 3$.} \label{fig_aniso}
\end{figure}

It has been shown in Theorem \ref{thm:theor-error} 
that in Sobolev spaces with dominating mixed smoothness of different orders in each direction, only the lowest smoothness and associated dimension enters the error estimate. In order to make this phenomenon visible from a numerical perspective, we  compute explicit worst-case errors in 
\[
 \mathring{H}^\bsr_\mix = \mathring{H}^{r_1} \otimes \cdots \otimes \mathring{H}^{r_d} ,
\]
where $r_1 = r$ and $r_2 = r_3 = \cdots = r_d = r+1$. Then, Theorem \ref{thm:theor-error} predicts that the worst-case error asymptotically behaves like in the univariate setting, i.e. decays at a rate of $\mathcal{O}(N^{-r})$. The question that is investigated in Figure \ref{fig_aniso} is how long it takes to overcome the preasymptotic regime until this favorable convergence rate becomes visible.

On the left-hand-side of Figure \ref{fig_aniso}, i.e. for $r=1$, already with less than $3000$ points the Frolov method follows the asymptotic regime of $N^{-1}$ in all the considered cases $d \in \{2,3,\ldots, 7\}$.

In contrast, on the right-hand-side of Figure \ref{fig_aniso}, i.e. for $r=2$, the dimension seems  to have a much larger impact onto the length of the sub-optimal preasymptotic regime. For example, in $d=7$ the $N^{-2}$-rate becomes visible only when the number of points $N$ is larger than $\approx 10^5$.

We remark that the sparse grid method is also able to deal with anisotropic mixed smoothness vectors 
$\bsr = (r_1,\ldots, r_d)$. Then, however, the construction needs to be adjusted to 
the smoothness vector which has to be known in advance, 
see \cite[pp. 32,36,72]{Tem86}, the recent survey \cite[Sect.\ 10.1]{DTU16} and the 
references therein. The resulting sparse grid construction therefore is not a universal cubature 
formula.\footnote{Note that it is also possible to construct dimension-adaptive spare grids, which are are able to detect the smoothness vector in the process of approximation adaptively, cf. \cite{Gerstner.Griebel:2003}.}

However, both plots in in Figure \ref{fig_aniso} were computed with the exact same set of Frolov points, which automatically benefit from the anisotropic smoothness that is present in a given integration problem, i.e. in this case $r=1$ or $r=2$. Therefore, it is not necessary to estimate the smoothness of the integrand and tune the method appropriately.

%

%

\section*{Acknowledgement}
T.U. wishes to thank Winfried Bruns (Osnabrueck) for several fruitful discussions.
T.U. and J.O. gratefully acknowledge support by the German Research Foundation (DFG) Ul-403/2-1, GR-1144/21-1 and the Emmy-Noether programme, Ul-403/1-1.

\newpage
\appendix
\section{Appendix: Sparse grid cubature in $H^r_\mix(\mathbb{T}^d)$}\label{sec_sparsegrid}
Let
\begin{equation}\label{trapez}
 Q_N(f) := \sum_{j=0}^{N-1}\frac{1}{N} f\Big( \frac{j}{N} \Big)
\end{equation}
denote the uniformly weighted $N$-point trapezoidal rule. It is known that it achieves the optimal rate of convergence $N^{-r}$
in the periodic Sobolev space $H^r(\mathbb{T}), r \in \N$ (our proof below also works for the univariate case). 
In order to obtain a multivariate integration method we define the hierarchical quadrature rules
\begin{equation} \label{eqn_delta}
 \Delta_k := \Delta_k(Q) := Q_{2^{k}} - Q_{2^{k-1}} \quad \text{ for all } k = 1, 2, \ldots 
\end{equation}
and $\Delta_0 = Q_1$. Their tensor product is denoted by $\Delta_{\bsk} := \bigotimes_{j=1}^d \Delta_{k_j}$, $\bsk \in \mathbb{N}_0^d$.
The sparse grid cubature rule of level $L\in \N$ is then given by
\begin{equation} \label{eqn_sg}
 Q^{sg}_L := \sum_{|\bsk|_1 \leq L} \Delta_{\bsk} ,
\end{equation}
with multi-indices $\bsk = (k_1,\ldots,k_d) \in \N_0^d$. The cubature rule 
$Q^{sg}_L$ uses 
\begin{equation}\label{number_nodes}
   N_L = \sum_{|\bsk|_1 \leq L} \prod_{j=1}^d 2^{k_j-1} = \mathcal{O}(2^L \cdot L^{d-1})
\end{equation}
function values combined with non-equal weights. The following theorem gives the well-known error bound in $H^r_{\mix}(\mathbb{T}^d)$. For the convenience of the 
reader we will also give a proof. 


\begin{theorem} \label{cor_sg}
Consider the sparse grid cubature rule $Q^{sg}_L$ as it is defined in 
\eqref{eqn_delta} and \eqref{eqn_sg} based on the univariate trapezoidal rule 
$\eqref{trapez}$. The worst-case integration error of $Q^{sg}_L$ in $H^r_{\mix}(\mathbb{T}^d)$ can be bounded by
\begin{equation}\label{sg_bound}
 \sup_{\|f\|_{H^r_\mix} \leq 1} \left| \int_{[0,1]^d}f(\bsx)\rd\bsx - Q^{sg}_L(f) \right| \asymp N^{-r} (\log N)^{(d-1)(r+1/2)} ,
\end{equation}
where $N = N_L$ denotes the number of points used by $Q^{sg}_L$.
\end{theorem}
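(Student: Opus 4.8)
The plan is to compute the worst-case error \emph{exactly} as the dual norm of the error functional, using that $H^r_\mix(\mathbb{T}^d)$ is a tensor-product periodic reproducing kernel Hilbert space. Writing $e_{\bm j}(\bsx)=\exp(2\pi\imag\,\bm j\cdot\bsx)$ for $\bm j\in\Z^d$, its norm is induced by the weight $w_{\bsr}(\bm j)=\prod_{i=1}^d(1+|2\pi j_i|^{2r})$, so that for any linear rule $Q$ the error functional $R=I_d-Q$ obeys, exactly as in \eqref{eqn_functional_norm},
\[
\sup_{\|f\|_{H^r_\mix}\le 1}|R(f)|^2=\sum_{\bm j\in\Z^d}\frac{|R(e_{\bm j})|^2}{w_{\bsr}(\bm j)}.
\]
First I would record the univariate facts $Q_N(e_j)=\mathbf 1\{N\mid j\}$, whence $\Delta_0(e_j)=1$ and, for $k\ge1$, $\Delta_k(e_j)=\mathbf 1\{2^k\mid j\}-\mathbf 1\{2^{k-1}\mid j\}$, which vanishes unless $j\neq0$ and $v_2(j)=k-1$ (the $2$-adic valuation), where it equals $-1$. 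In particular $\sum_{k\ge0}\Delta_k=I$ by telescoping, so tensorizing gives $I_d=\sum_{\bsk\in\N_0^d}\Delta_{\bsk}$ and hence the clean representation $R_L:=I_d-Q^{sg}_L=\sum_{|\bsk|_1>L}\Delta_{\bsk}$.

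Second, I would make $R_L(e_{\bm j})$ explicit. Since $\Delta_{\bsk}(e_{\bm j})=\prod_i\Delta_{k_i}(e_{j_i})$ survives only when each $k_i\in\{0,\,v_2(j_i)+1\}$, collecting the coordinates on which $k_i=v_2(j_i)+1$ into a set $S\subseteq\supp(\bm j)$ turns the constraint $|\bsk|_1>L$ into $\sum_{i\in S}(v_2(j_i)+1)>L$ and produces
\[
R_L(e_{\bm j})=\sum_{\substack{S\subseteq\supp(\bm j)\\ \sum_{i\in S}(v_2(j_i)+1)>L}}(-1)^{|S|}.
\]
In particular $|R_L(e_{\bm j})|\le 2^d$, and $R_L(e_{\bm j})=0$ unless $\sum_{i\in\supp(\bm j)}(v_2(j_i)+1)>L$.

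Third, for the upper bound I would group frequencies by their support $T$ and by the valuations $a_i=v_2(j_i)$, then sum over odd parts. Using $\sum_{m\ \mathrm{odd}}(1+|2\pi 2^{a}m|^{2r})^{-1}\asymp 2^{-2ra}$ for $r>1/2$, the dual norm collapses to
\[
\|R_L\|^2\lesssim\sum_{T\subseteq[d]}\ \sum_{\substack{\bm a\in\N_0^T\\ \sum_{i\in T}(a_i+1)>L}}2^{-2r\sum_{i\in T}a_i},
\]
and the binomial identity together with $\#\{\bm a\in\N_0^s:|\bm a|_1=\ell\}\asymp\ell^{s-1}$ shows each $T$ contributes $\asymp L^{|T|-1}2^{-2rL}$, so the full support $T=[d]$ dominates and $\|R_L\|^2\lesssim L^{d-1}2^{-2rL}$.

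Finally, for the matching lower bound I would keep only full-support frequencies in the ``boundary layer'' $\sum_i(a_i+1)=L+1$: then every proper subset has $\sum_{i\in S}(a_i+1)\le L$, so only $S=[d]$ contributes and $|R_L(e_{\bm j})|=1$; since there are $\binom{L}{d-1}\asymp L^{d-1}$ such valuation vectors, these frequencies alone give $\|R_L\|^2\gtrsim L^{d-1}2^{-2rL}$. Combining yields $\|R_L\|\asymp 2^{-rL}L^{(d-1)/2}$, and substituting $2^{-rL}\asymp N^{-r}L^{r(d-1)}$ and $L\asymp\log N$ from $N=N_L\asymp 2^L L^{d-1}$ (cf.\ \eqref{number_nodes}) turns this into the claimed $N^{-r}(\log N)^{(d-1)(r+1/2)}$. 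The main obstacle is getting the sharp power of the logarithm: the triangle inequality $\|R_L\|\le\sum_{|\bsk|_1>L}\|\Delta_{\bsk}\|\asymp L^{d-1}2^{-rL}$ overshoots by a factor $L^{(d-1)/2}$, so one must exploit the near-orthogonality of the $\Delta_{\bsk}$ in frequency space — which is exactly what the explicit formula for $R_L(e_{\bm j})$ encodes — in an $\ell_2$ (Parseval) rather than an $\ell_1$ fashion.
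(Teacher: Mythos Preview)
Your proof is correct and takes a genuinely different route from the paper's. The paper obtains the upper bound via a detour through sampling recovery: it introduces nested trigonometric interpolation operators $I_{2^k}$, invokes the characterization $\|f\|^2_{H^r_\mix}\asymp\sum_{\bsk}2^{2r|\bsk|_1}\|\Delta_{\bsk}(I)[f]\|_2^2$ (imported from earlier work), observes that $Q_L^{sg}(f)=\int_{[0,1]^d} I_L^{sg}[f](\bsx)\,\rd\bsx$, and bounds $\|f-I_L^{sg}[f]\|_2$ by one H\"older step in $\ell_2$; for the lower bound it simply cites a known result. Your approach is more elementary and fully self-contained: exploiting the Fourier diagonalization of the periodic RKHS, you compute $\|R_L\|^2$ as a weighted $\ell_2$-sum over $\Z^d$, pin down exactly which frequencies contribute via the $2$-adic valuation, and extract both the upper bound and a matching lower bound from the same explicit formula for $R_L(e_{\bm j})$. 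The paper's route has the virtue of linking cubature to the sampling-recovery literature; yours avoids external input and makes transparent why the logarithmic exponent is exactly $(d-1)(r+1/2)$ --- namely $(d-1)/2$ from the $\ell_2$-count of boundary multi-indices and the extra $r(d-1)$ from the conversion $2^{-rL}\asymp N^{-r}L^{r(d-1)}$. Your closing remark that the naive $\ell_1$-triangle inequality overshoots by $L^{(d-1)/2}$ is exactly the point: the paper's H\"older-in-$\ell_2$ step and your Parseval computation are two ways of capturing the same near-orthogonality of the $\Delta_{\bsk}$.
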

\begin{proof} The lower bound follows from \cite[Thm.\ 5.2]{DuUl15}. Note, that the lower bound also holds true 
for the smaller space $\mathring{H}^r_{\mix}(\mathbb{T}^d)$ since the constructed fooling functions also belong to this space. 
For the upper bound we use the detour to sampling recovery. In the recent paper 
\cite[Thm.\ 4.7, 4.8, 5.13, 5.14]{ByUl17} it has been observed that nested trigonometric interpolation operators 
\begin{equation}\label{f100}
I_{2^k}[f](x) = \frac{1}{2^k}\sum\limits_{u=0}^{2^{k}-1} f\Big(\frac{u}{2^k}\Big) \mathcal{D}_{2^k}^1\Big(x-\frac{u}{2^k}\Big)
\quad,\quad k=0,1,2,...,
\end{equation}
based upon the modified (nested) Dirichlet kernel 
$\mathcal{D}_{2^k}^1(x) := \mathcal{D}_{2^{k-1}}(x) - e^{2\pi i2^{k-1}x}$ may be used to characterize $H^r_{\mix}(\mathbb{T}^d)$.
In fact, the tensor products $\Delta_{\bsk}(I)$, $\bsk \in \N_0^d$, are defined analogously to 
\eqref{eqn_delta} using this time \eqref{f100} (note that $\mathcal{D}_{1}^1(x) \equiv 1$).
Then we have 
\begin{equation}\label{charact}
   \|f\|_{H^r_{\mix}}^2 \asymp \sum\limits_{\bsk \in \N_0^d} 2^{2r|\bsk|_1}\|\Delta_{\bsk}(I)[f]\|^2_2\,.
\end{equation}
See also \cite[Prop.\ 3.3]{ByDuSiUl16} for the classical (non-nested) trigonometric interpolation. 
The associated sparse grid interpolation operator $I^{sg}_L$ is defined in the same way as above in \eqref{eqn_sg}. 
Now we argue similar as in \cite[Thm.\ 5.4]{ByDuSiUl16}. Indeed, H\"older's 
inequality together with \eqref{charact} gives 
\begin{equation}\label{A7}
  \begin{split}
    \|f-I^{sg}_L[f]\|_2 &\leq \Big(\sum\limits_{|\bsk|_1>L} 2^{-2|\bsk|_1 r}\Big)^{1/2}\cdot 
    \Big(\sum\limits_{|\bsk|_1>L} 2^{2r|\bsk|_1}\|\Delta_{\bsk}(I)[f]\|^2_2\Big)^{1/2}\\
    &\leq 2^{-rL}L^{(d-1)/2}\|f\|_{H^r_{\mix}}\,.
  \end{split}
\end{equation}
Noting further that 
$$
    Q_L^{sg}(f) = \int_{[0,1]^d} I^{sg}_L[f](\bsx)\rd\bsx
$$
we have by H\"older's inequality and \eqref{A7}
$$
    \Big|\int_{[0,1]^d}f(\bsx)\rd\bsx - Q^{sg}_L(f)\Big| \leq \|f-I^{sg}_L[f]\|_2 \leq 2^{-rL}L^{(d-1)/2}\|f\|_{H^r_{\mix}}\,,
$$    
see also \cite[Rem.\ 8.9]{DTU16}\,. Finally, the bound \eqref{sg_bound} follows from \eqref{number_nodes}\,.
\end{proof}

\begin{remark} The above multivariate cubature rule on the sparse grid uses a number of nodes on the boundary of $[0,1]^d$ 
which are not needed when dealing with functions from $\mathring{H}^r_{\mix} \subset H^r_{\mix}(\mathbb{T}^d)$\,. However, as already mentioned in the proof 
of Theorem \ref{cor_sg}, with respect to the asymptotic rate of convergence 
we can not do essentially better. However, to do a fair cost comparison for the different methods considered in Section 7 
we only counted the interior nodes (see the diagrams above, e.g. Figure \ref{fig_d2_d4_wce}). 
\end{remark}

\begin{figure}
\centering

 \includegraphics{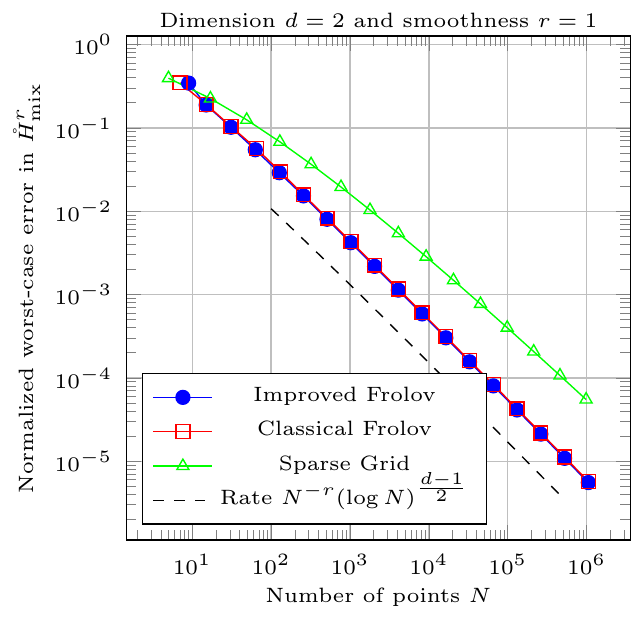}
 \,
 \includegraphics{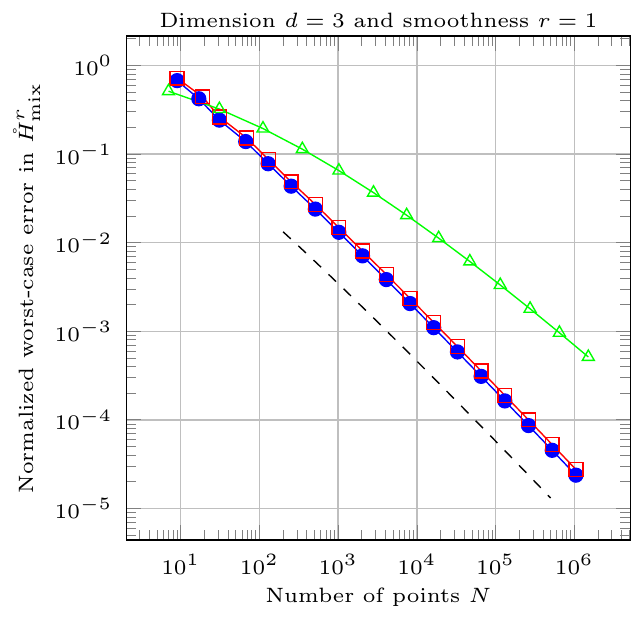}\\
 \includegraphics{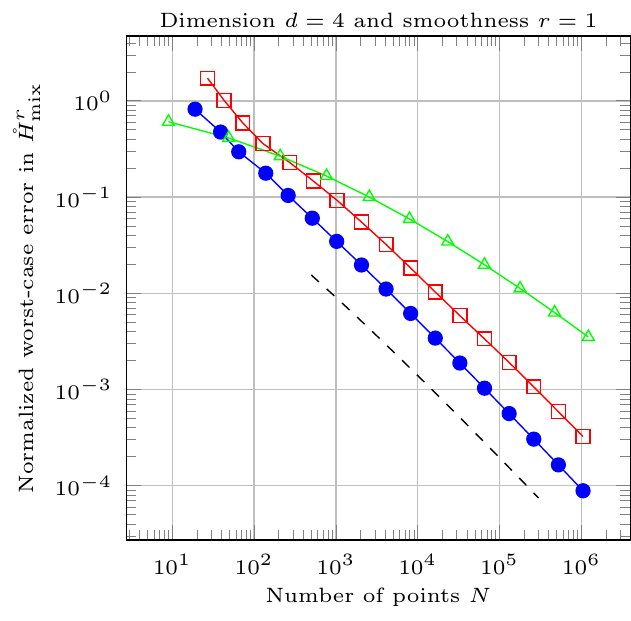}
 \,
 \includegraphics{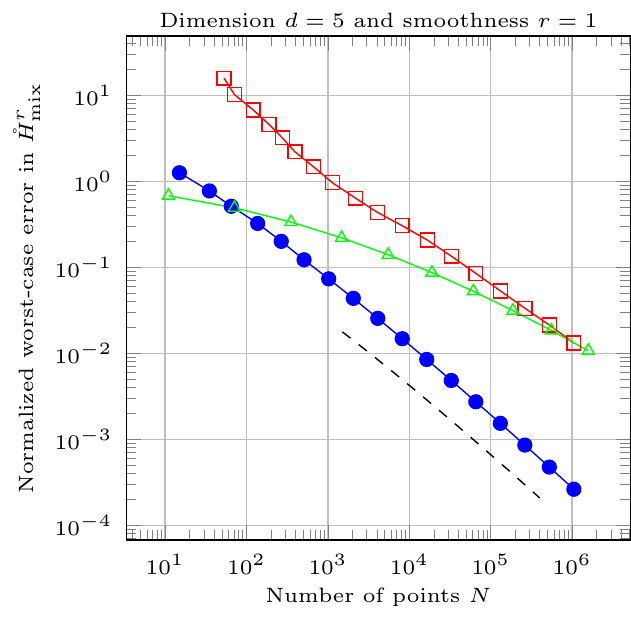}\\
 \includegraphics{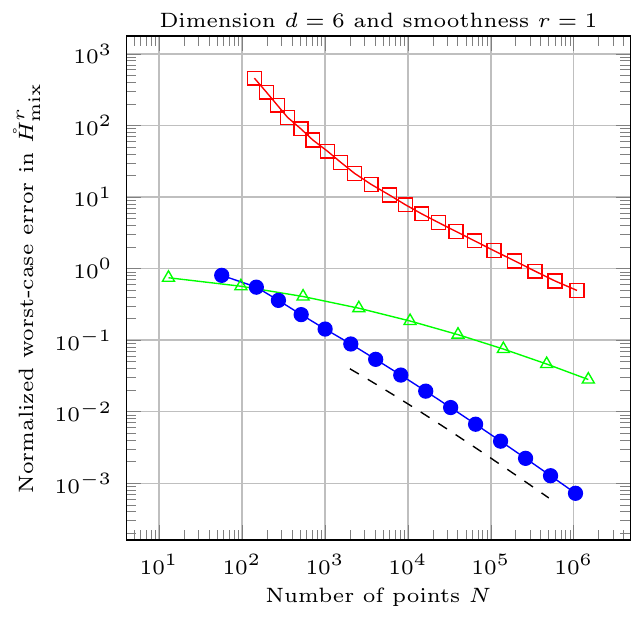}
 \,
 \includegraphics{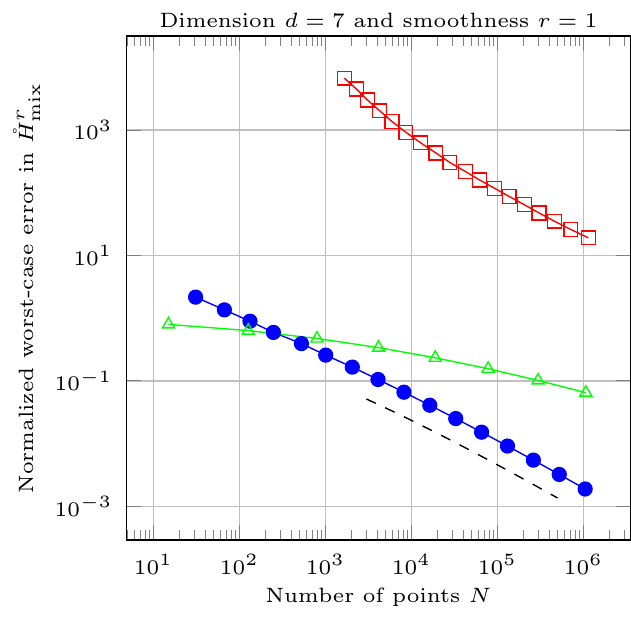} 
 \caption{Normalized worst-case errors for uniform smoothness parameter $r=1$ in dimension $d \in \{2,3,4,5,6,7\}$ for different Frolov constructions and sparse grids.} \label{fig_r1}
\end{figure}

\begin{figure}
\centering

\includegraphics[width=0.45\linewidth]{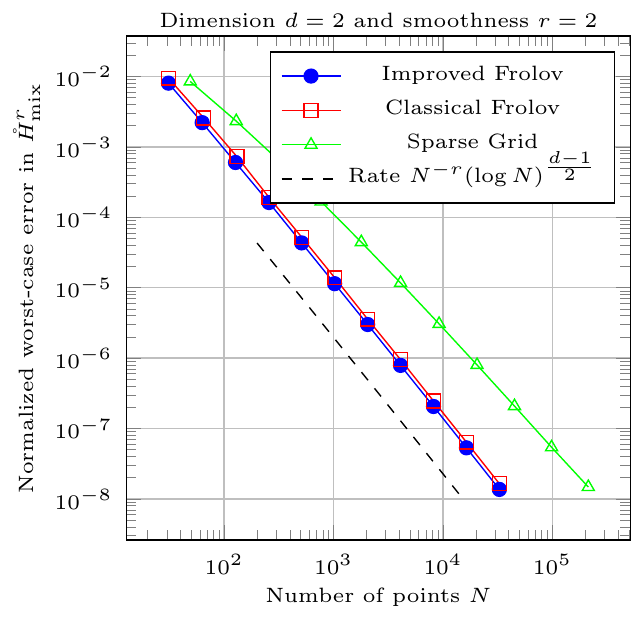} \, \includegraphics[width=0.45\linewidth]{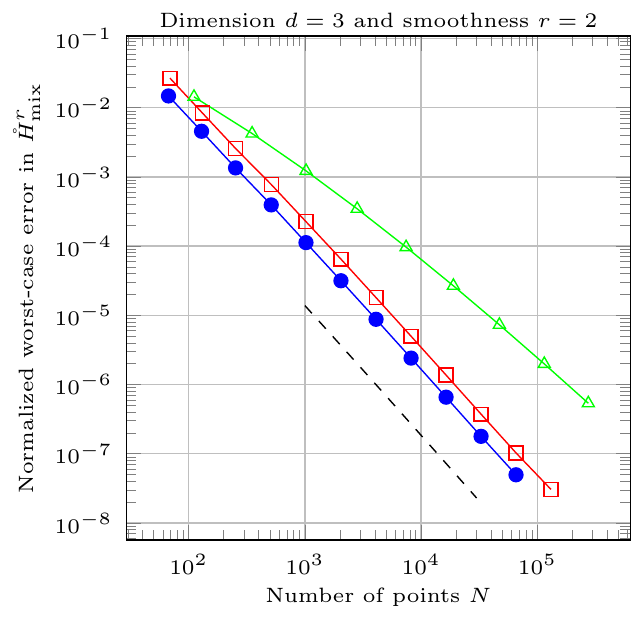}\\
 \includegraphics[width=0.45\linewidth]{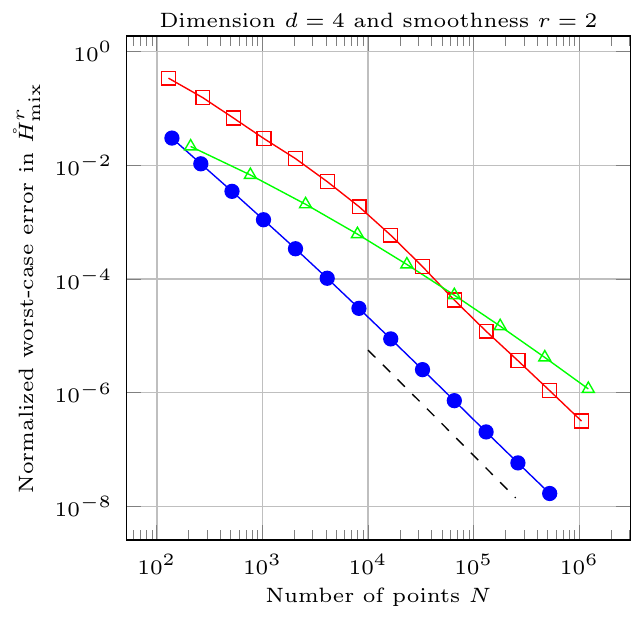} \, \includegraphics[width=0.45\linewidth]{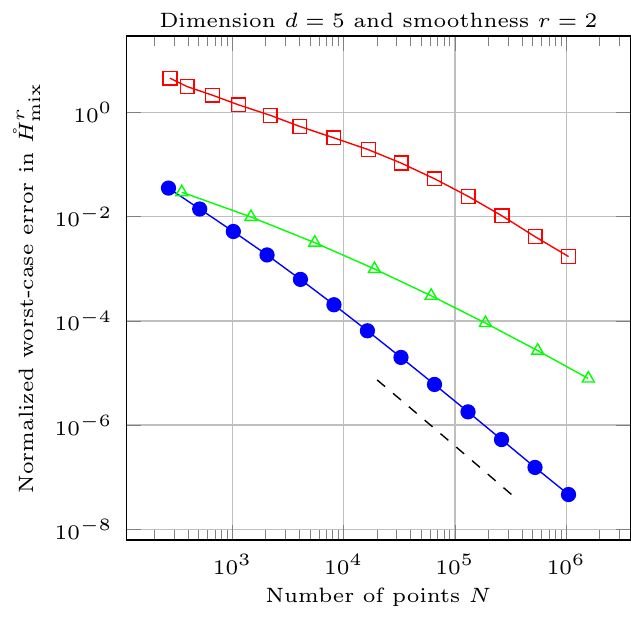}\\
 \includegraphics[width=0.45\linewidth]{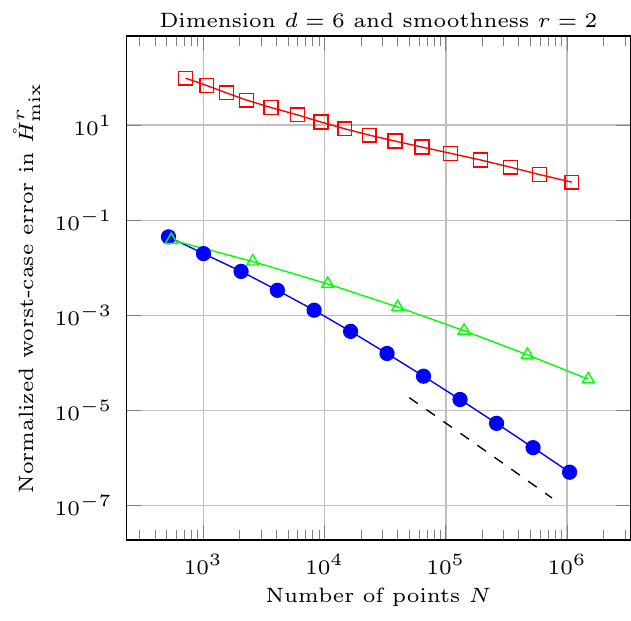} \, \includegraphics[width=0.45\linewidth]{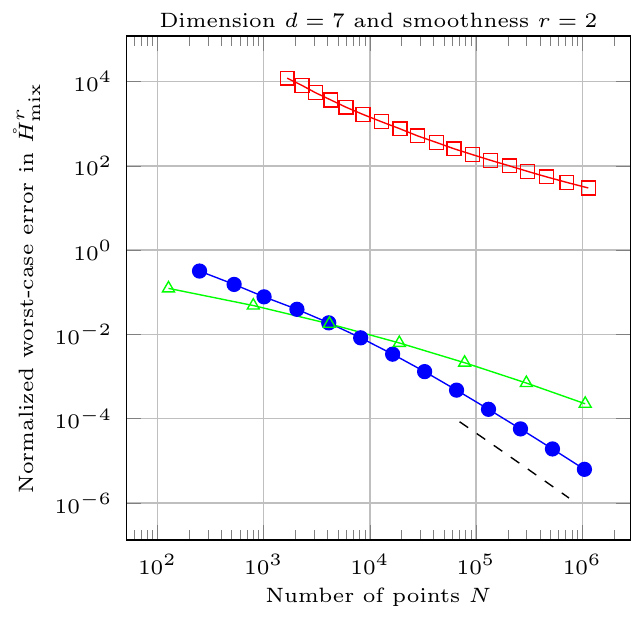}\\
 
 \caption{Normalized worst-case errors for uniform smoothness parameter $r=2$ in dimension $d \in \{2,3,4,5,6,7\}$ for different Frolov constructions and sparse grids.} \label{fig_r2}
\end{figure}

\begin{figure}
\centering

 \includegraphics {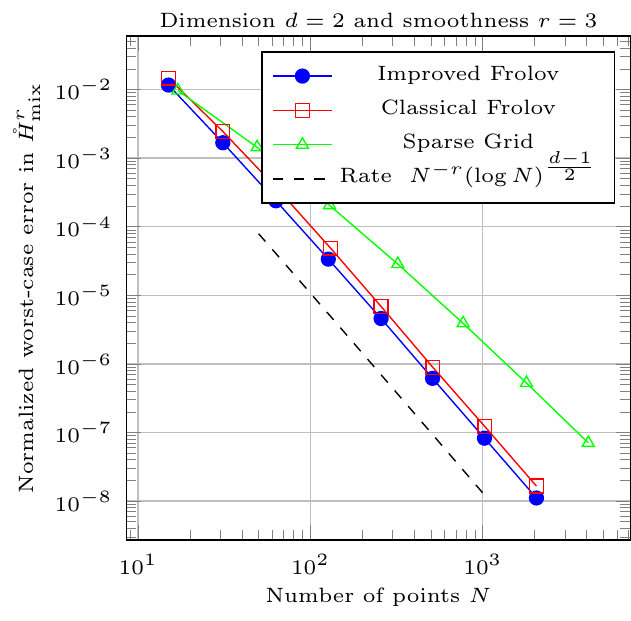} \, \includegraphics {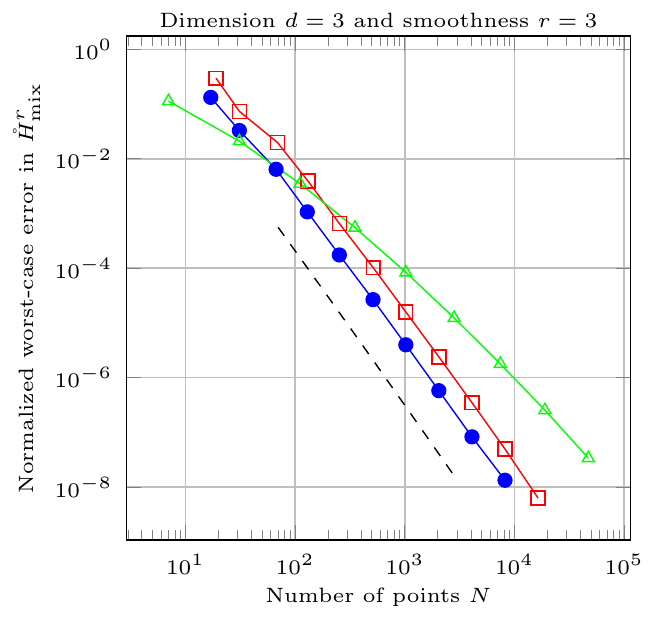}\\
 \includegraphics {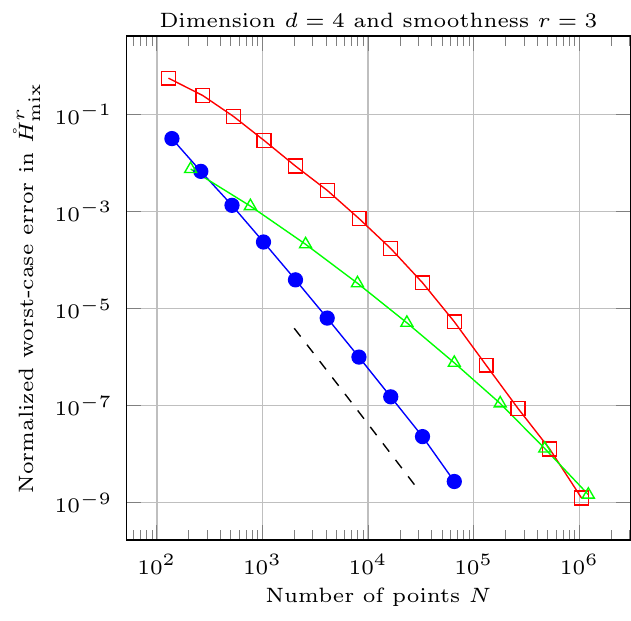} \, \includegraphics {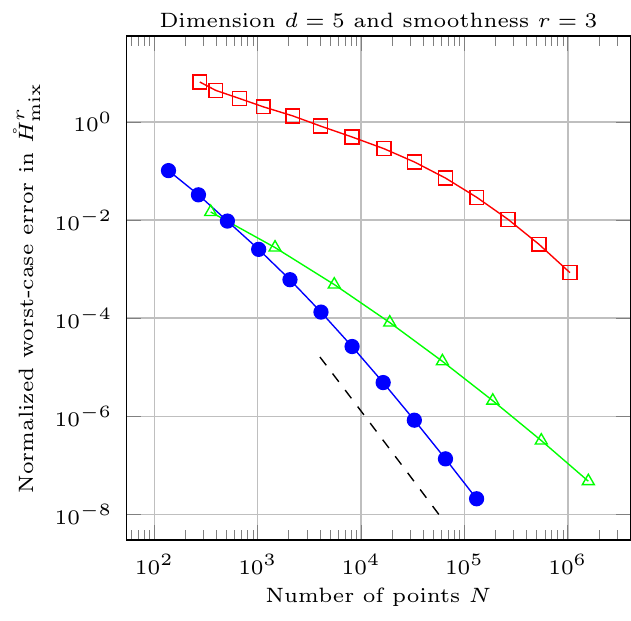}\\
 \includegraphics {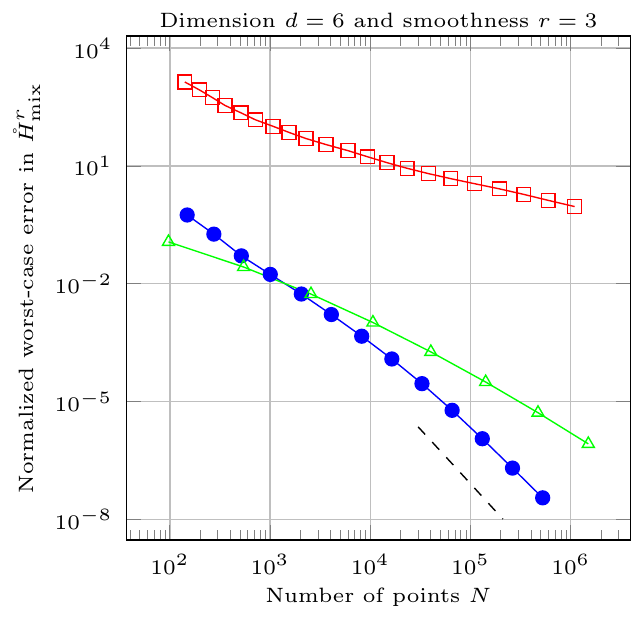} \, \includegraphics {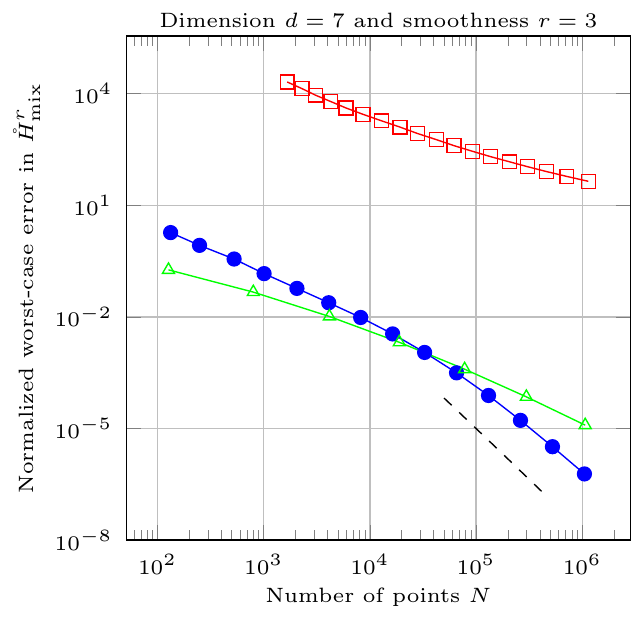}\\
 
 \caption{Normalized worst-case errors for uniform smoothness parameter $r=3$ in dimension $d \in \{2,3,4,5,6,7\}$ for different Frolov constructions and sparse grids.} \label{fig_r3}
\end{figure}

\pagebreak

\bibliography{literatur}

\end{document}